\documentclass{article}
\usepackage{amsmath, amssymb, amsthm}
\usepackage{accents}
\usepackage{algorithm,algorithmic}
\usepackage{ascmac}
\usepackage{cases}
\usepackage{comment}
\usepackage{tikz-cd}
\usepackage[all]{xy}
\usepackage{multirow}
\usepackage{titlesec}
\usepackage{appendix}
\usepackage{hyperref}
\makeatletter
\def\sbar{\accentset{{\cc@style\underline{\mskip13mu}}}}
\def\mbar{\accentset{{\cc@style\underline{\mskip25mu}}}}
\makeatother

\theoremstyle{definition}
\newtheorem{Def}{Definition}[subsection]

\newtheorem{Lem}[Def]{Lemma}
\newtheorem{Prop}[Def]{Proposition}
\newtheorem{Thm}[Def]{Theorem}
\newtheorem{Cor}[Def]{Corollary}
\newtheorem{Exp}[Def]{Expectation}
\newtheorem{Rmk}[Def]{Remark}
\newtheorem{Alg}[Def]{Algorithm}
\newtheorem{Conj}{Conjecture}
\newtheorem{Theo}{Theorem}
\theoremstyle{plain}

\title{Computing superspecial hyperelliptic curves of genus 4 with automorphism group properly containing the Klein 4-group}
\author{Ryo Ohashi\thanks{Graduate School of Information Science and Technology, The University of Tokyo, E-mail: \texttt{ryo-ohashi@g.ecc.u-tokyo.ac.jp}} \and Momonari Kudo\thanks{Fukuoka Institute of Technology, E-mail: \texttt{m-kudo@fit.ac.jp}}}
\date{\today}

\begin{document}

\maketitle

\begin{abstract}
    In algebraic geometry, enumerating or finding superspecial curves in positive characteristic $p$ is important both in theory and in computation.
    In this paper, we propose feasible algorithms to enumerate or find superspecial hyperelliptic curves of genus $4$ with automorphism group properly containing the Klein $4$-group.
    Executing the algorithms on Magma, we succeeded in enumerating such superspecial curves for every $p$ with $19 \leq p < 500$, and in finding a single one for every $p$ with $19 \leq p < 7000$.
\end{abstract}

\section{Introduction}\label{sec:intro}
Throughout this paper, a curve means a non-singular projective variety of dimension one, and isomorphisms are ones over an algebraically closed field.
The complexity is measured by the number of (arithmetic) operations in a field, and the soft-O notation $\tilde{O}$ is used for ignoring logarithmic factors.
Let $p$ be a prime, and $k$ an algebraically closed field of characteristic $p$.
A curve $C$ is said to be {\it superspecial} if its Jacobian variety is isomorphic to a product of supersingular elliptic curves.
This is well-known to be equivalent to that the Cartier operator on the regular differential forms on $C$ acts as zero, where a matrix representing the operator with respect to a suitable basis is called a {\it Cartier-Manin matrix} of $C$.
For a given pair $(g,p)$, the number of isomorphism classes of superspecial genus-$g$ curves in characteristic $p$
is finite (possibly zero), and it is important to determine the (non-)existence of such a curve;
if exists, the next problem is to enumerate all isomorphism classes.
For $g \leq 3$, these problems were both solved in arbitrary $p$, but for $g\geq 4$ (even when $g=4$), they are still open in general $p$, see \cite{KudoRIMS} for a survey. 

The case on which we focus in this paper is $g=4$, and we here review known results in the case briefly; we refer to \cite{KudoRIMS} for details.
In this case, there does not exist any superspecial curve when $p \in \{2,3\}$ by Ekedahl's bound~\cite{Ekedahl}, while it follows from \cite[Theorem 3.1]{FGT} that such a curve uniquely (up to isomorphism) exists in $p=5$.
To investigate the case where $p \geq 7$, Kudo-Harashita 
developed {\it computational} approaches for enumeration and search of superspecial genus-4 curves, see \cite{KH17} and \cite{KH20} for algorithms in the non-hyperelliptic case, and \cite{KH18} and \cite{KH22} for ones in the hyperelliptic case.
The former (resp.\ latter) two algorithms can enumerate {\it all} superspecial non-hyperelliptic (resp.\ {hyperelliptic}) curves of genus $4$ for a given $p$, and its computational cost heavily depends on the Gr\"{o}bner basis computation.
Although the complexity of each algorithm is considered to be exponential with respect to $p$, their efficient implementations on Magma~\cite{Magma} established enumeration results for some small $p \leq 23$; 
a particular but the most surprising result is \cite[Theorem B]{KH17}, which says that there is no superspecial curve of genus $4$ in characteristic $p=7$.

On the other hand, some polynomial-time algorithms have been proposed in recent years, for restricted cases where curves have non-trivial automorphism groups: the paper
\cite{KHH} for the non-hyperelliptic case where ${\rm Aut} \supset \mathbf{V}_4$, and \cite{OKH} (resp.\ \cite{KNT}) for the hyperelliptic case where ${\rm Aut} \supset \mathbf{V}_4$ (resp.\ ${\rm Aut} \supset \mathbf{C}_6$), where $\mathbf{V}_4$ and $\mathbf{C}_n$ respectively denote the Klein $4$-group and the cyclic group of order $n$.
In particular,
Ohashi-Kudo-Harashita~\cite{OKH} proposed an algorithm to enumerate superspecial hyperelliptic curves $H$ of genus 4 such that ${\rm Aut}(H) \supset \mathbf{V}_4$, and its complexity is estimated as $O(p^4)$ operations in $\mathbb{F}_{p^4}$.
If we restrict ourselves to the case where $\mathrm{Aut}(H) \cong \mathbf{V}_4$, the algorithm is expected to terminate in $O(p^3)$.
 
In this paper, we explore the hyperelliptic case where $\mathrm{Aut}(H)$ {\it properly} contains $\mathbf{V}_4$, namely $\mathrm{Aut}(H) \supsetneq \mathbf{V}_4$.
In this case, it is known that $\mathrm{Aut}(H)$ is isomorphic to $\mathbf{D}_4$, $\mathbf{D}_8$, $\mathbf{D}_{10}$, $\mathbf{C}_{16} \rtimes \mathbf{C}_2$, or $\mathbf{C}_5 \rtimes \mathbf{D}_4$, where $\mathbf{D}_n$ denotes the dihedral group of order $2n$.
Our contributions are summarized as follows:
\begin{itemize}
    \item In Section \ref{subsec:D4}, we propose an algorithm (Algorithm \ref{alg:D4} below) to enumerate superspecial hyperelliptic curves $H$ of genus $4$ with $\mathrm{Aut}(H) \supset \mathbf{D}_4$ in characteristic $p$, as a simpler version of Ohashi-Kudo-Harashita's algorithm~\cite{OKH}.
    Thanks to our criterion (Lemma \ref{lem:D4} below) to detect $H$'s satisfying $\mathrm{Aut}(H) \supset \mathbf{D}_4$ together with Proposition \ref{lem:SelfCol}, the complexity of the proposed algorithm is estimated as $O(p^3)$ operations in $\mathbb{F}_{p^4}$, which is more efficient than directly applying Ohashi-Kudo-Harashita's one.
    \item In Sections \ref{subsec:D8} and \ref{subsec:D10}, we also propose efficient algorithms specific to the case where $\mathrm{Aut}(H) \supset \mathbf{D}_8$ or $\mathrm{Aut}(H) \supset \mathbf{D}_{10}$, by using Cartier-Manin matrices together with Gauss' hypergeometric series.
    For each case, we explicitly determine the Cartier-Manin matrix of $H$, and prove that counting all superspecial $H$'s is just done by computing the gcd of only one pair of univariate polynomials.
    With this fact together with our isomorphy criteria, the complexity of enumerating superspecial curves by this algorithm is $\tilde{O}(p^2)$ operations in $\mathbb{F}_{p^4}$, and it is expected to be $\tilde{O}(p)$ in practice.
    \item Implementing and executing the proposed algorithms on Magma, we obtain the following computational results:
\end{itemize}

\begin{Theo}\label{ThmB}
    For each type of automorphism group properly containing $\mathbf{V}_4$, the number of superspecial hyperelliptic curves of genus $4$ in characteristic $p$ with $19 \leq p < 500$ are summarized in Table \ref{table:new} below. 
    Moreover, such a superspecial curve exists for every prime $p$ with $19 \leq p < 7000$.
\end{Theo}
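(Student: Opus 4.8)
The plan is to read Theorem \ref{ThmB} as the output of certified enumeration procedures, one for each dihedral family $\mathbf{D}_4$, $\mathbf{D}_8$, $\mathbf{D}_{10}$, together with a direct analysis of the two maximal groups $\mathbf{C}_{16}\rtimes\mathbf{C}_2$ and $\mathbf{C}_5\rtimes\mathbf{D}_4$. The first step is the reduction: since $\mathrm{Aut}(H)\supsetneq\mathbf{V}_4$ forces $\mathrm{Aut}(H)$ to be one of the five listed groups, it suffices to enumerate all superspecial $H$ with $\mathrm{Aut}(H)\supset\mathbf{D}_4$, with $\mathrm{Aut}(H)\supset\mathbf{D}_8$, and with $\mathrm{Aut}(H)\supset\mathbf{D}_{10}$, while the curves whose full automorphism group is one of the two maximal groups form finitely many explicit isomorphism classes with distinguished normal forms and are treated separately. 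Each dihedral enumeration proceeds by fixing a normal form $y^2=f(x)$ adapted to the prescribed subgroup of automorphisms, parametrizing the resulting finite set of isomorphism classes, and imposing the superspeciality condition that the Cartier--Manin matrix vanish.

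For the $\mathbf{D}_4$ case I would run Algorithm \ref{alg:D4}, whose correctness and completeness rest on the detection criterion of Lemma \ref{lem:D4} and on Proposition \ref{lem:SelfCol}; this produces all superspecial curves with $\mathrm{Aut}(H)\supset\mathbf{D}_4$ in $O(p^3)$ operations in $\mathbb{F}_{p^4}$. For $\mathbf{D}_8$ and $\mathbf{D}_{10}$ I would invoke the explicit Cartier--Manin matrices obtained in Sections \ref{subsec:D8} and \ref{subsec:D10} through Gauss' hypergeometric series: once these matrices are known, the requirement that every entry vanish collapses to the single condition that a prescribed pair of univariate polynomials share a root, so the superspecial parameters are exactly the roots of one $\gcd$, computable in $\tilde{O}(p^2)$ (and $\tilde{O}(p)$ in practice) operations in $\mathbb{F}_{p^4}$. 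Combined with the isomorphy criteria, this counts isomorphism classes directly. The two maximal groups correspond to essentially zero-dimensional strata, so for each $p$ it remains only to test superspeciality of the finitely many candidate normal forms by a direct evaluation of their Cartier--Manin matrices.

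Having certified the procedures, the remaining work is execution. For $19\le p<500$ I would run all three enumerations to completion, compute the full automorphism group of each surviving curve so as to place it in the correct column, and tabulate the totals in Table \ref{table:new}; for the existence claim up to $7000$ I would instead run each procedure only until the first superspecial curve is produced, which is far cheaper than exhaustive enumeration and suffices to certify non-emptiness.

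The main obstacle is ensuring that the tabulated numbers are \emph{exact} rather than mere lower bounds. This requires two things: that the chosen normal forms be genuinely exhaustive, so that no isomorphism class is omitted, and that the isomorphy criteria correctly recognize when two normal-form curves are isomorphic, so that no class is double-counted; the assignment of each curve to the right automorphism-group column depends on the same criteria. A secondary, practical difficulty is confirming that the $\gcd$ computations (and the Gr\"obner-type steps underlying the $\mathbf{D}_4$ algorithm) remain feasible near the top of each range and that the Magma output is reproducible. Given the polynomial complexity bounds established earlier, however, these are engineering rather than conceptual issues, and the genuine content of the proof lies in the correctness of the criteria invoked above.
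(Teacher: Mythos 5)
Your proposal is correct and follows essentially the same route as the paper: certify Algorithms \ref{alg:D4}, \ref{alg:D8}, and \ref{alg:D10} via Lemma \ref{lem:D4}, Proposition \ref{lem:SelfCol}, the explicit Cartier--Manin matrices, and the isomorphy criteria (Propositions \ref{D8iso} and \ref{D10iso}), then run full enumeration for $19 \leq p < 500$ and an early-termination search for $19 \leq p < 7000$. The only cosmetic differences are that the paper dispenses with computing $\mathrm{Aut}(H)$ by using the $\lambda = -1$ criteria (Corollaries \ref{D8special} and \ref{D10special}) and the congruence conditions of Corollary \ref{spssp} for the two maximal groups --- the latter also giving the search a free shortcut when $p \equiv -1,9 \pmod{16}$ or $p \equiv 9 \pmod{10}$ --- whereas you propose direct automorphism-group and Cartier--Manin computations, which is marginally less efficient but equally valid.
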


As in \cite{OKH}, the upper bounds on $p$ in Theorem \ref{ThmB} can be also updated easily, see Section \ref{sec:comp} for details.
From our computational results, we conjecture (cf.\ Conjecture \ref{conj} below) that there exists a superspecial hyperelliptic curve $H$ of genus $4$ with $\mathrm{Aut}(H) \supset \mathbf{D}_4$ in arbitrary characteristic $p$ with $p \geq 19$.



\if 0
\subsection*{Acknowledgments}
This work was supported by JSPS Grant-in-Aid for Young Scientists 20K14301 and 23K12949.
This research was conducted under a contract of “Research and development on new generation cryptography for secure wireless communication services" among “Research and Development for Expansion of Radio\,Wave Resources (JPJ000254)", which was supported by the Ministry of Internal Affairs and Communications, Japan.
\fi

\section{Preliminaries}
In this section, let $k$ be an algebraically closed field of characteristic $p \geq 3$.

\subsection{Cartier-Manin matrix of a certain hyperelliptic curve}
Let $g \geq 2$ be an integer. Any hyperelliptic curve $H$ over $k$ of genus $g$ is isomorphic to
\[
    H: y^2 = f(x), \quad \deg{f(x)} = 2g+1\ {\rm or}\ 2g+2
\]
where $f(x) \in k[x]$ is a square-free polynomial. Then, the {\it Cartier-Manin matrix\,} of $H$ is defined to be the $(g \times g)$-matrix
\[
    M = \begin{pmatrix}
        \gamma_{p-1} & \gamma_{p-2} & \cdots & \gamma_{p-g}\\
        \gamma_{2p-1} & \gamma_{2p-2} & \cdots & \gamma_{2p-g}\\
        \vdots & \vdots & \ddots & \vdots\\
        \gamma_{gp-1} & \gamma_{gp-2} & \cdots & \gamma_{gp-g}
    \end{pmatrix},
\]
where we write
\[
    f(x)^{(p-1)/2} = \sum_{i=0}^n \gamma_ix^i, \quad\ n := \frac{p-1}{2} \cdot\, \deg{f(x)}
\]
with $\gamma_i \in k$. The Cartier-Manin matrix $M$ enables us to determine whether $H$ is superspecial or not by the following result:
\begin{Thm}[{\cite[Theorem 4.1]{Nygaard}}]\label{thm:Nygaard}
With notation as above, $H$ is superspecial if and only if $M$ is zero.
\end{Thm}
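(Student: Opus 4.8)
The plan is to reduce the statement to a single explicit computation of the Cartier operator in the standard basis of regular differentials. As recalled in Section~\ref{sec:intro}, $H$ is superspecial if and only if the Cartier operator $\mathcal{C}$ acts as zero on $H^0(H,\Omega^1)$; taking this equivalence for granted, it suffices to show that the matrix of $\mathcal{C}$ with respect to a convenient basis coincides, up to an entrywise $p$-th root, with the matrix $M$. First I would fix the basis $\omega_j = x^{j-1}\,dx/y$ for $j=1,\dots,g$ of $H^0(H,\Omega^1)$, which is the standard basis of holomorphic differentials on a hyperelliptic curve in either degree ($\deg f = 2g+1$ or $2g+2$), and recall the characterizing properties of $\mathcal{C}$: it is $p^{-1}$-linear, $\mathcal{C}(z^p\eta)=z\,\mathcal{C}(\eta)$, it kills exact forms, and $\mathcal{C}(dx/x)=dx/x$.

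The core step is to compute $\mathcal{C}(\omega_i)$. Writing $\frac{dx}{y} = \frac{y^{p-1}}{y^{p}}\,dx = \frac{f(x)^{(p-1)/2}}{y^{p}}\,dx$ and pulling the $p$-th power $1/y^p$ outside by $p^{-1}$-linearity gives
\[
    \mathcal{C}(\omega_i) = \frac{1}{y}\,\mathcal{C}\!\left( x^{i-1} f(x)^{(p-1)/2}\,dx \right).
\]
Expanding $f(x)^{(p-1)/2}=\sum_j \gamma_j x^j$ so that $x^{i-1}f(x)^{(p-1)/2}=\sum_j \gamma_j x^{i-1+j}$, and invoking the rules $\mathcal{C}(x^{m}\,dx)=0$ unless $p\mid m+1$ together with $\mathcal{C}\!\left(c\,x^{sp-1}\,dx\right)=c^{1/p}x^{s-1}\,dx$, only the exponents with $i-1+j\equiv -1 \pmod p$ survive. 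The change of index $j=sp-i$ then yields
\[
    \mathcal{C}(\omega_i) = \sum_{s} \gamma_{sp-i}^{1/p}\,\omega_s .
\]

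Reading off coefficients, the matrix of $\mathcal{C}$ in the basis $(\omega_1,\dots,\omega_g)$ has $(s,i)$-entry $\gamma_{sp-i}^{1/p}$, which is exactly the entrywise $p$-th root of the $(s,i)$-entry of $M$. Since $k$ is perfect, the map $c\mapsto c^{1/p}$ is a bijection of $k$, so this matrix vanishes if and only if $M$ vanishes; combined with the equivalence above, this proves the theorem. Before concluding I would verify the bookkeeping that makes the sum land exactly in $\{1,\dots,g\}$: one checks that $\gamma_{sp-i}=0$ whenever $s\le 0$ (the index $sp-i$ is negative) or $s\ge g+1$ (the index exceeds $n=\tfrac{p-1}{2}\deg f$), treating $\deg f=2g+1$ and $\deg f=2g+2$ separately, so that $\mathcal{C}$ genuinely preserves the span of $\omega_1,\dots,\omega_g$ and produces an honest $g\times g$ matrix.

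The main obstacle is not the computation itself but its two delicate ingredients. The first is the semilinear bookkeeping: the $p$-th roots coming from $p^{-1}$-linearity must be tracked correctly, and one must confirm the range estimate above to be sure that no non-regular term $x^{s-1}\,dx/y$ with $s\notin\{1,\dots,g\}$ leaks into $\mathcal{C}(\omega_i)$. The second, and the genuinely substantive external input, is the equivalence ``superspecial $\iff \mathcal{C}=0$'' borrowed at the outset; a self-contained proof would route through Dieudonn\'e theory, identifying $\mathcal{C}$ with the semilinear Frobenius on the relevant cohomology (its rank computing the $a$-number) and using the characterization of superspecial abelian varieties of dimension at least $2$ by $a$-number equal to $g$.
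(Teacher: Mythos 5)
Your argument is correct, but note that the paper offers no proof at all for this statement: it is quoted as \cite[Theorem 4.1]{Nygaard}, a black box resting on Nygaard's analysis of Frobenius slopes on crystalline cohomology. What you have written is the standard self-contained route (essentially Yui's computation of the Cartier operator on hyperelliptic curves): your semilinear bookkeeping is right, the identity $\mathcal{C}(\omega_i)=\sum_s \gamma_{sp-i}^{1/p}\,\omega_s$ is the correct one, and your range check is sound in both cases --- for $\deg f = 2g+2$ one has $n=(g+1)(p-1)<(g+1)p-g$, and for $\deg f=2g+1$ the bound is even smaller, so no term with $s\notin\{1,\dots,g\}$ survives; since $c\mapsto c^{1/p}$ is bijective on the perfect field $k$, vanishing of the semilinear matrix is indeed equivalent to $M=0$ (the transpose/convention ambiguity for semilinear operators is harmless for this purpose). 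The one point to be clear-eyed about is the one you yourself flag: the equivalence ``superspecial $\iff \mathcal{C}=0$ on $H^0(H,\Omega^1_H)$'' is exactly where the content of the cited theorem lives, and it is not elementary --- it amounts to identifying $\mathcal{C}$ with (the dual of) Frobenius on $H^1(H,\mathcal{O}_H)$, so that $\mathcal{C}=0$ means the $a$-number equals $g$, and then invoking Oort's characterization that an abelian variety of dimension $g\geq 2$ with $a$-number $g$ is isomorphic to a product of supersingular elliptic curves. Since the paper takes this equivalence as known in its introduction as well, your proof is a legitimate and more informative alternative to the citation: it buys an explicit derivation of why the specific matrix $M$ of coefficients $\gamma_{ip-j}$ detects superspeciality, at the cost of importing Oort's theorem (or Dieudonn\'e theory) where the paper imports Nygaard's.
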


In the following, we consider the hyperelliptic curves
\[
    C: y^2 = (x^r-1)(x^r-\lambda) =: f(x), \quad D: y^2 = x(x^r-1)(x^r-\lambda) =: g(x),
\]
where $\lambda \in k$. We denote respectively by $\alpha_d$ and $\beta_d$ the $x^d$-coefficients of $f(x)^e$ and $g(x)^e$ with $e := (p-1)/2$.
We also use the following notation:
\begin{Def}
A {\it truncation of Gauss' hypergeometric series} is defined as
\[
    G^{(d)}(a,b,c\,;z) := \sum_{n=0}^d \frac{(a\,;n)(b\,;n)}{(c\,;n)(1\,;n)}z^n,
\]
where $(x\,;n) := x(x+1) \cdots (x+n-1)$.
\end{Def}

We can show that each entry of the Cartier-Manin matrices of $C$ and $D$ is written as a truncation of Gauss' hypergeometric series:
\begin{Prop}\label{alpha}
For all integers $d$ with $0 \leq d \leq r(p-1)$, we obtain the following statements:
\begin{enumerate}
\item If $d \not\equiv 0 \pmod{r}$, then we have $\alpha_d = \beta_{d+e} = 0$.
\item If $d = kr$ with $0 \leq k \leq e$, then we have
\[
    \alpha_d = \beta_{d+e} \equiv \lambda^{e-k} \cdot (-1)^k\binom{e}{k}G^{(k)}(1/2,-k,1/2-k\,;\lambda) \pmod{p}.
\]
\item If $d = kr$ with $e \leq k \leq p-1$, then we have
\[
    \alpha_d = \beta_{d+e} \equiv (-1)^k\binom{e}{p-1-k}G^{(k)}(1/2,1+k,3/2+k\,;\lambda) \pmod{p}.
\]
\end{enumerate}
\end{Prop}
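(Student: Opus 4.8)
The plan is to reduce the whole computation to a single binomial convolution and then to recognise that convolution as a terminating hypergeometric sum, the crucial input being the congruence $e \equiv -1/2 \pmod{p}$ coming from $2e = p-1$. First I would set $u = x^r$, so that $f(x)^e = (u-1)^e(u-\lambda)^e$ is genuinely a polynomial in $u$. This immediately shows $\alpha_d = 0$ whenever $r \nmid d$, i.e.\ the $\alpha$-part of (1). Since $g(x) = x\,f(x)$, we have $g(x)^e = x^e f(x)^e$, so the coefficient of $x^{d+e}$ in $g^e$ is exactly the coefficient of $x^d$ in $f^e$; that is, $\beta_{d+e} = \alpha_d$ for every $d$, which simultaneously establishes the identity $\alpha_d = \beta_{d+e}$ asserted throughout the statement and the $\beta$-part of (1). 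It then remains to compute $\alpha_{kr}$, the coefficient of $u^k$ in $(u-1)^e(u-\lambda)^e$.

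Next I would expand both factors by the binomial theorem and collect the coefficient of $u^k$. After simplifying the sign $(-1)^{e-i}(-\lambda)^{e-(k-i)} = (-1)^k \lambda^{e-k+i}$, this gives
\[
    \alpha_{kr} = (-1)^k \lambda^{e-k}\sum_{i}\binom{e}{i}\binom{e}{k-i}\lambda^{i},
\]
the sum ranging over $\max(0,k-e)\le i\le\min(k,e)$. For $k\le e$ the range is $0\le i\le k$, and for $k\ge e$ it is $k-e\le i\le e$; these two regimes are exactly what produce the two different hypergeometric parameter triples.

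The heart of the proof is to identify this convolution with a truncated Gauss series by the contiguous-ratio method. For case (2) I would let $T_i$ denote the summand and compute $T_{i+1}/T_i$; reducing modulo $p$ via $e \equiv -1/2$ turns the $e$-dependence into $1/2$'s, and the resulting ratio matches term-by-term the contiguous ratio of $G^{(k)}(1/2,-k,1/2-k\,;\lambda)$. As the initial term is $T_0 = \binom{e}{k}$, a short induction gives statement (2). For case (3) I would first reindex by $i = e-j$ and then $n = p-1-k-j$ to rewrite the sum as $(-1)^k\sum_n \binom{e}{k+n-e}\binom{e}{n}\lambda^n$; the same mod-$p$ computation shows its term ratio equals that of $G^{(k)}(1/2,1+k,3/2+k\,;\lambda)$, while the leading term is $\binom{e}{k-e} = \binom{e}{p-1-k}$, yielding statement (3). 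A reassuring consistency check is that both formulas must agree at the overlap $k=e$.

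The step I expect to be the main obstacle is reconciling the summation ranges in case (3): the genuine convolution terminates at $n = p-1-k$, since $\binom{e}{k+n-e}$ vanishes once $k+n-e>e$, whereas $G^{(k)}$ is truncated at $n=k$. One must therefore verify that the extra terms with $p-k\le n\le k$ vanish modulo $p$. This should follow because in that range the Pochhammer $(1+k\,;n)$ runs through the consecutive integers $k+1,\dots,k+n$ and hence contains the multiple $p$, forcing the term to be $\equiv 0$; but to conclude cleanly one also has to check that the denominator Pochhammers stay $p$-adic units there, and, relatedly, that no denominator appearing in the term-ratio recursion vanishes modulo $p$ over the effective range. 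Pinning down these non-vanishing conditions, together with keeping the signs and powers of $\lambda$ straight across the two cases, is where the real care is needed.
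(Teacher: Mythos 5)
Your proposal takes essentially the same route as the paper: reduce $\alpha_d=\beta_{d+e}$ to the coefficient $\gamma_k$ of $\{(x-1)(x-\lambda)\}^e$, write it as the binomial convolution $(-1)^k\lambda^{e-k}\sum_i\binom{e}{i}\binom{e}{k-i}\lambda^i$, and identify that convolution with the truncated Gauss series via $e\equiv-1/2\pmod{p}$ --- exactly the content of the paper's appendix (its two lemmas and Proposition \ref{gamma}), with your contiguous-ratio induction being merely a different verification of the same identity. The truncation-range issue you flag in case (3) is real (the paper passes over it silently), and your resolution is the correct one: the coefficients of $G^{(k)}$ with $p-k\le n\le k$ vanish modulo $p$ because $(1+k\,;n)$ acquires a factor of $p$ while the denominators stay $p$-adic units on most of that range, and in the remaining range where $(3/2+k\,;n)$ also meets a multiple of $p$, the numerator factor $(1/2\,;n)$ supplies a second factor of $p$, so every such term still has positive $p$-adic valuation.
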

\begin{proof}
Letting $\gamma_k$ be the $x^k$-coefficient of $\{(x-1)(x-\lambda)\}^{(p-1)/2}$, we have that $\alpha_d = \beta_{d+e}$ is equal to $\gamma_k$.
The assertions follow from Proposition \ref{gamma} below.
\end{proof}
\begin{Cor}\label{divisible}
For all integers $k$ with $0 \leq k \leq e$, we have
\[
    \alpha_{(e-k)r} \equiv \alpha_{(e+k)r}\lambda^k\ \,{\rm and}\ \,\beta_{(e-k)r+e} \equiv \beta_{(e+k)r+e}\lambda^k.
\]
\end{Cor}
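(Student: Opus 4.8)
The plan is to reduce both claimed congruences to a single coefficient symmetry of a one-variable polynomial. First I would note that the two statements are equivalent: since Proposition \ref{alpha} gives $\alpha_d = \beta_{d+e}$ for every $d$, specializing at $d = (e-k)r$ and $d = (e+k)r$ converts the relation for the $\beta$'s into the relation for the $\alpha$'s, so it suffices to prove $\alpha_{(e-k)r} \equiv \alpha_{(e+k)r}\lambda^k$. Next, exactly as in the proof of Proposition \ref{alpha}, I would pass from the two-variable picture to one variable: writing $P(u) := \{(u-1)(u-\lambda)\}^e$ and substituting $u = x^r$ shows that $\alpha_{mr}$ equals the $u^m$-coefficient $\gamma_m$ of $P(u)$ for every $m$ (this is precisely the identity $\alpha_d = \gamma_k$ recorded there, with $d = kr$). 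Thus the goal becomes the purely polynomial statement $\gamma_{e-k} = \lambda^k\gamma_{e+k}$ for the coefficients of $P(u)$, which I would prove over $\mathbb{Z}[\lambda]$ and then reduce modulo $p$.

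The key step is a functional equation for $P$ coming from the symmetry $u \mapsto \lambda/u$, which swaps the two roots $1$ and $\lambda$ up to scalars. Factoring $P(u) = (u-1)^e(u-\lambda)^e$ and computing directly,
\[
    P(\lambda/u) = \frac{(\lambda-u)^e}{u^e}\cdot\frac{\lambda^e(1-u)^e}{u^e} = \frac{\lambda^e\,(u-\lambda)^e(u-1)^e}{u^{2e}} = \frac{\lambda^e P(u)}{u^{2e}},
\]
where the middle equality uses $(\lambda-u)^e(1-u)^e = (-1)^{2e}(u-\lambda)^e(u-1)^e$. Rewriting this as $u^{2e}P(\lambda/u) = \lambda^e P(u)$ and comparing the $u^{e+k}$-coefficients of both sides yields $\gamma_{e-k}\lambda^{e-k} = \lambda^e\gamma_{e+k}$, that is, $\gamma_{e-k} = \lambda^k\gamma_{e+k}$. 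Combining with $\alpha_{(e\pm k)r} = \gamma_{e\pm k}$ gives the first congruence, and hence, via $\alpha_d = \beta_{d+e}$, the second.

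I do not expect a serious obstacle along this route: the functional equation is an elementary identity in $\mathbb{Z}[\lambda][u]$, so the coefficient relation already holds before any reduction, and the only care needed is bookkeeping of the indices $e \pm k$ (noting that $\deg P = 2e = p-1$, so both $u^{e-k}$ and $u^{e+k}$ lie in range exactly when $0 \leq k \leq e$) together with the trivial boundary case $k = 0$. The one point that must be checked is that the substitution $u = x^r$ genuinely identifies $\alpha_{mr}$ with $\gamma_m$, but this is precisely the reduction already used to establish Proposition \ref{alpha}. An alternative would be to feed the explicit hypergeometric formulas of Proposition \ref{alpha} directly into the statement and match $G^{(e-k)}(1/2,-(e-k),\tfrac12-(e-k)\,;\lambda)$ against $G^{(e+k)}(1/2,1+e+k,\tfrac32+e+k\,;\lambda)$ modulo $p$; this would force a nontrivial congruence between two truncated hypergeometric series, which is exactly the complication the functional-equation argument sidesteps.
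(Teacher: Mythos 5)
Your proof is correct, and the reduction steps (using $\alpha_d=\beta_{d+e}$ and passing to the coefficients $\gamma_m$ of $P(u)=\{(u-1)(u-\lambda)\}^e$ via $u=x^r$) coincide with the paper's; but your key step is genuinely different. The paper's proof simply cites Proposition \ref{alpha} together with Corollary \ref{div} of the appendix, and the latter is proved by the hypergeometric route you explicitly identified as the ``alternative'': one writes $\gamma_{e-k}$ and $\gamma_{e+k}$ via the two formulas of Proposition \ref{gamma}, notes that the truncation $G^{(e-k)}(1/2,-(e-k),\tfrac12-(e-k)\,;\lambda)$ may be extended to $G^{(e+k)}$ because the Pochhammer factor $(-(e-k)\,;n)$ annihilates every term with $n>e-k$, and then matches parameters through the congruences $-(e-k)\equiv 1+e+k$ and $\tfrac12-(e-k)\equiv\tfrac32+e+k \pmod{p}$ (both coming from $2e+1=p$). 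Your functional equation $u^{2e}P(\lambda/u)=\lambda^e P(u)$, induced by the involution swapping the roots $1$ and $\lambda$, replaces that bookkeeping with an elementary coefficient symmetry, and it delivers a slightly stronger conclusion: the relation $\gamma_{e-k}=\lambda^k\gamma_{e+k}$ holds exactly in $\mathbb{Z}[\lambda]$, not merely as a congruence modulo $p$. What the paper's route buys instead is economy within its own setup: Propositions \ref{alpha} and \ref{gamma} are needed anyway to compute the Cartier-Manin entries efficiently in the algorithms of Sections \ref{subsec:D8} and \ref{subsec:D10}, so the corollary there falls out of already-established statements in two lines, whereas your argument, though more self-contained and arguably cleaner, introduces a separate idea.
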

\begin{proof}
It follows from Proposition \ref{alpha} and Corollary \ref{div}.
\end{proof}

\subsection{Hyperelliptic curves of genus 4}

We recall a classification by Kudo-Nakagawa-Takagi~\cite{KNTpre} (a preliminary version of \cite{KNT}) of hyperelliptic curves of genus $4$ by means of their automorphism groups.
For a hyperelliptic curve $H$ over $k$, we denote by $\iota_H$ its hyperelliptic involution.
The quotient group $\overline{\rm Aut}(H):=\mathrm{Aut}(H) / \langle \iota_H \rangle$ is called the {\it reduced automorphism group} of $H$.
For $n \geq 2$, we also denote respectively by $\mathbf{C}_n$, $\mathbf{D}_n$, $\mathbf{A}_n$, $\mathbf{V}_4$, and $\mathbf{Q}_8$ the cyclic group of order $n$, the dihedral group of order $2n$, the alternating group of order $n!/2$, the Klein $4$-group $\mathbf{C}_2 \times \mathbf{C}_2$, and the quaternion group.

\begin{Thm}[{\cite[Theorem C]{KNTpre}}]\label{thm:app}
Assume that $p \geq 7$.
The reduced automorphism group $\overline{\mathrm{Aut}}(H)$ of a hyperelliptic curve $H$ of genus $4$ over an algebraically closed field $k$ of characteristic $p$ is isomorphic to either of the $10$ finite groups listed in Table \ref{table:aut}.
In each type of $\overline{\mathrm{Aut}}(H)$, the finite group isomorphic to $\mathrm{Aut}(H)$ is provided in the third column of Table \ref{table:aut}, and the hyperelliptic curve $H$ is isomorphic to $y^2 = f(x)$ given in the fourth column of the table.
\end{Thm}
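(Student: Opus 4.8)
The plan is to exploit the standard fact that $\overline{\mathrm{Aut}}(H) = \mathrm{Aut}(H)/\langle \iota_H\rangle$ acts faithfully on the quotient $H/\langle \iota_H\rangle \cong \mathbb{P}^1$, and hence embeds as a finite subgroup of $\mathrm{PGL}_2(k) = \mathrm{Aut}(\mathbb{P}^1)$. This action must permute the branch locus $B \subset \mathbb{P}^1$ of the double cover $H \to \mathbb{P}^1$, i.e.\ the images of the Weierstrass points; since $H$ has genus $4$, the set $B$ consists of exactly $2g+2 = 10$ \emph{distinct} points. Thus the whole problem reduces to classifying finite subgroups $\overline{G} \le \mathrm{PGL}_2(k)$ that admit an invariant reduced divisor of degree $10$, and then reconstructing $H$ and $\mathrm{Aut}(H)$ from each such $\overline{G}$.

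First I would invoke Dickson's classification of finite subgroups of $\mathrm{PGL}_2(k)$ in characteristic $p$: each is cyclic, dihedral, one of $\mathbf{A}_4$, $\mathbf{S}_4$, $\mathbf{A}_5$, or lies in the ``wild'' families (a $p$-elementary-abelian group, a semidirect product of such with a cyclic group, or $\mathrm{PSL}_2(p^m)$, $\mathrm{PGL}_2(p^m)$). The hypothesis $p \ge 7$ keeps the relevant small groups tame, and the requirement of a $\overline{G}$-invariant set of $10$ distinct points bounds $|\overline{G}|$ and eliminates the large wild families, whose minimal faithful orbits are either too large or cannot be made reduced. Concretely I would record the orbit sizes of each candidate on $\mathbb{P}^1$ — for $\mathbf{C}_n$ (as $x \mapsto \zeta_n x$) the sizes are $1,1,n,n,\dots$; for $\mathbf{D}_n$ (adjoining $x \mapsto 1/x$) they are $2, n, n, 2n, \dots$; the platonic groups carry their classical orbit sizes — and determine precisely which groups allow a partition of $10$ into admissible orbit sizes. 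This combinatorial sieve produces the finite list of $10$ reduced groups in the table.

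For each surviving $\overline{G}$ I would fix coordinates putting $\overline{G}$ in standard form and write down the most general $\overline{G}$-invariant square-free $f(x)$ of degree $9$ or $10$ whose roots realize the invariant $10$-point configuration; this yields the normal form $y^2 = f(x)$ of the fourth column, typically a one-parameter family with special parameters collapsing into the higher-symmetry strata. Finally I would recover $\mathrm{Aut}(H)$ (third column) as a central extension
\[
1 \longrightarrow \langle \iota_H \rangle \longrightarrow \mathrm{Aut}(H) \longrightarrow \overline{\mathrm{Aut}}(H) \longrightarrow 1,
\]
lifting each $\bar\sigma(x) = (ax+b)/(cx+d)$ to a map $(x,y) \mapsto (\bar\sigma(x), e_\sigma(x)\,y)$ with $e_\sigma(x)^2 = f(\bar\sigma(x))/f(x)$, and computing the resulting factor system to pin down the isomorphism type of the extension (so that the cases resolve into $\mathbf{D}_4$, $\mathbf{D}_8$, $\mathbf{D}_{10}$, $\mathbf{C}_{16}\rtimes\mathbf{C}_2$, $\mathbf{C}_5\rtimes\mathbf{D}_4$, and the remaining smaller groups).

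The hard part will be this last step together with the characteristic-$p$ bookkeeping: deciding whether the extension by $\iota_H$ splits and identifying the precise group rather than merely its order requires a careful analysis of the lifting cocycle and the orders of the lifts $e_\sigma(x)$, and one must separately verify that no \emph{extra} automorphisms appear for special $p$ (controlled here by $p \ge 7$, which forces the relevant group orders prime to $p$ in the tame cases). The residual delicate point is checking that the enumerated families are complete and that their closure relations are correctly nested, so that the $10$ strata are genuinely distinct.
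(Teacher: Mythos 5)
Your attempt cannot be measured against an internal argument, because this paper does not prove Theorem \ref{thm:app} at all: the statement is imported verbatim from \cite[Theorem C]{KNTpre}, and the table is quoted. That said, the strategy you outline --- faithful action of $\overline{\mathrm{Aut}}(H)$ on $\mathbb{P}^1$, Dickson's classification of finite subgroups of $\mathrm{PGL}_2(k)$ in characteristic $p$, an orbit-size sieve on the $2g+2=10$ branch points, invariant normal forms, and resolution of the central extension of $\overline{\mathrm{Aut}}(H)$ by $\langle \iota_H \rangle$ --- is exactly the standard route, and it is essentially how the cited classification is obtained. So the plan is sound and not a ``different'' approach.

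The genuine gap is that, as written, this is a proof outline rather than a proof: every step carrying actual content is deferred. The sieve is described but never run; it should be executed to show, e.g., that $\mathbf{A}_4$ survives because $10=4+6$, that $\mathbf{S}_4$ and $\mathbf{A}_5$ die because $10$ is not a sum of their orbit lengths, and that for $p \ge 7$ no wild subgroup acts (an element of order $p$ in $\mathrm{PGL}_2(k)$ has a unique fixed point and all other orbits of length divisible by $p$, forcing $p \mid 9$ or $p \mid 10$, impossible for $p \ge 7$). The normal forms in the fourth column are asserted, not derived. Most importantly, the extension analysis is postponed precisely where the content of the table lies: the same reduced group $\mathbf{C}_2$ yields both $\mathrm{Aut}(H) \cong \mathbf{V}_4$ (Type \textbf{2-1}) and $\mathrm{Aut}(H) \cong \mathbf{C}_4$ (Type \textbf{2-2}), and the same reduced group $\mathbf{V}_4$ yields both $\mathbf{D}_4$ and $\mathbf{Q}_8$, so the isomorphism type of $\mathrm{Aut}(H)$ genuinely depends on the cocycle computation you wave at but do not perform. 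Finally, the sieve alone admits extra candidates (e.g.\ cyclic $\mathbf{C}_4$, $\mathbf{C}_5$, $\mathbf{C}_8$, $\mathbf{C}_{10}$ as reduced groups) whose invariant $10$-point configurations automatically acquire larger symmetry and collapse into the dihedral rows; you flag this completeness issue as ``residual'' but proving it is part of establishing that exactly these $10$ strata occur. Until those three computations are carried out, the statement is not proved.
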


\renewcommand{\arraystretch}{1.4}
\begin{table}[H]
\centering{
\caption{Classification of hyperelliptic curves $H$ of genus $4$ over an algebraically closed field $k$ of characteristic $p \geq 7$ according to their reduced and full automorphism groups, with their defining equations $y^2 = f(x)$, where $A,B,C,D \in k$.}
\label{table:aut}
\vspace{5pt}
\scalebox{0.85}{
\begin{tabular}{c||c|c|l} \hline
Type & $\overline{\mathrm{Aut}}(H)$ & ${\mathrm{Aut}}(H)$ & Defining equation $y^2 = f(x)$ for $H$ (in normal form)  \\ \hline 
{\bf 1} & $\{ 0 \}$ & $\mathbf{C}_2$ & $y^2 = (\mbox{square-free polynomial in $x$ of degree 9 or 10})$ \\ \hline
{\bf 2-1} & $\mathbf{C}_2$ &  $\mathbf{V}_4$ & $y^2 = x^{10} + A x^8 + B x^6 + C x^4 + D x^2 + 1$  \\ \hline
{\bf 2-2} & $\mathbf{C}_2$ & $\mathbf{C}_4$ & $y^2 = x^{9} + A x^7 + B x^5 + C x^3 + x$ \\ \hline
{\bf 3} & $\mathbf{C}_3$ & $\mathbf{C}_6$ & $y^2 = x^{10} + A x^7 + B x^4 + x$  \\ 
\multirow{2}{*}{\bf 4-1} & \multirow{2}{*}{$\mathbf{V}_4$} & \multirow{2}{*}{$\mathbf{D}_4$} & $y^2 = x^{10} + A x^8 + B x^6 + B x^4 + A x^2 + 1$, or   \\ 
 &  &  & $y^2 = x^{9} + A x^7 + B x^5 + A x^3 + x$  \\ \hline
{\bf 4-2} & $\mathbf{V}_4$ & $\mathbf{Q}_8$ & $y^2 = x (x^4-1)(x^4+A x^2+1)$ \\ 
{\bf 5} & $\mathbf{D}_4$ & $\mathbf{D}_8$ & $y^2 = x^{9} + A x^5 + x$ \\ 
{\bf 6} & $\mathbf{D}_5$ & $\mathbf{D}_{10}$ & $y^2 = x^{10} + A x^5 + 1$ \\ \hline
{\bf 7} & $\mathbf{A}_4$ & $\mathrm{SL}_2(\mathbb{F}_3)$ & $y^2 = x (x^4-1)(x^4 + 2 \sqrt{-3} x^2 + 1)$  \\ 
{\bf 8} & $\mathbf{D}_8$ & $\mathbf{C}_{16} \rtimes \mathbf{C}_2$ & $y^2 = x^{9} + x$\\ 
{\bf 9} & $\mathbf{C}_9$ & $\mathbf{C}_{18}$ & $y^2 = x^{10} + x$  \\
{\bf 10} & $\mathbf{D}_{10}$ & $\mathbf{C}_5 \rtimes \mathbf{D}_{4}$ & $y^2 = x^{10}+1$ \\ \hline
\end{tabular}
}
}
\end{table}
\renewcommand{\arraystretch}{1}

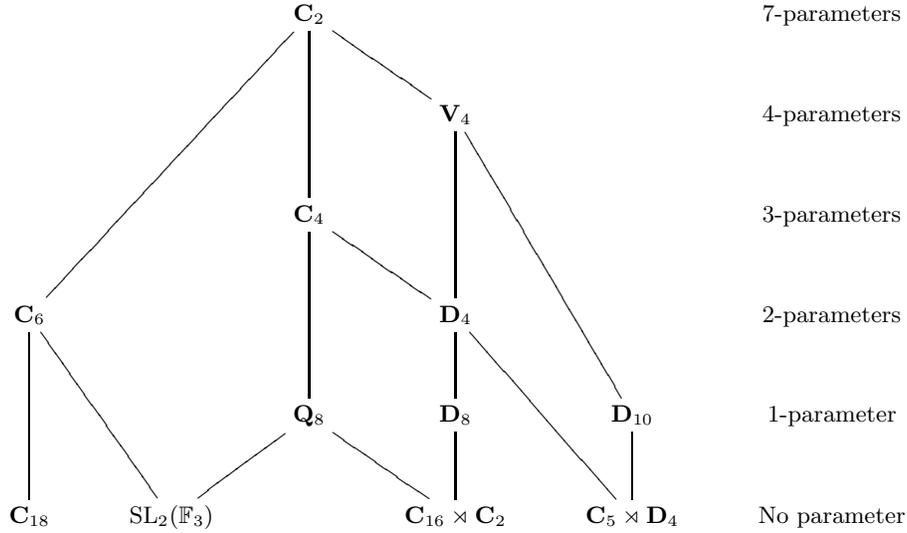
\begin{figure}[H]
\label{fig:aut}
 \centering
\caption{Containment relationships among the families of genus-$4$ hyperelliptic curves $H$ with given automorphism groups ${\rm Aut}(H)$, and the number of parameters representing each family.}
\renewcommand{\arraystretch}{0.8}
\centering{{\small 
    \[
        \xymatrix{
        & &  \mathbf{C}_2  \ar@{-}[rd] \ar@{-}[dd] \ar@{-}[llddd]  & & & \mbox{7-parameters}\\
        & & & \mathbf{V}_4 \ar@{-}[dd]  \ar@{-}[rddd]&   & \mbox{4-parameters}\\
        & & {\mathbf{C}_4} \ar@{-}[dd] \ar@{-}[rd]   & & & \mbox{3-parameters}\\
        \mathbf{C}_6 \ar@{-}[dd] \ar@{-}[rdd]  & &  \ar@{-}[d] & {\mathbf{D}_4} \ar@{-}[rdd] \ar@{-}[d] &  & \mbox{2-parameters}\\
        & & {\mathbf{Q}_8} \ar@{-}[ld]\ar@{-}[rd]  &  {\mathbf{D}_8} \ar@{-}[d]    & {\mathbf{D}_{10}} \ar@{-}[d] & \mbox{1-parameter}\\
        \mathbf{C}_{18} & {\mathrm{SL}_2(\mathbb{F}_3)} & & \mathbf{C}_{16} \rtimes \mathbf{C}_2  & \mathbf{C}_5 \rtimes \mathbf{D}_4 & \mbox{No parameter}}
    \] 
}}
\end{figure}

Note that the superspeciality of the curves of Types {\bf 8}, {\bf 9}, and {\bf 10} can be described as follows:
\begin{Cor}\label{spssp}
Assume that $p \geq 7$. Then, we have the following statements:
\begin{enumerate}
\item The genus-4 hyperelliptic curve $H$ whose automorphism group ${\rm Aut}(H)$ is isomorphic to $\mathbf{C}_{16} \rtimes \mathbf{C}_2$ is superspecial if and only if $p \equiv -1,9 \pmod{16}$.
\item The genus-4 hyperelliptic curve $H$ whose automorphism group ${\rm Aut}(H)$ is isomorphic to $\mathbf{C}_{18}$ is superspecial if and only if $p \equiv 8 \pmod{9}$.
\item The genus-4 hyperelliptic curve $H$ whose automorphism group ${\rm Aut}(H)$ is isomorphic to $\mathbf{C}_5 \rtimes \mathbf{D}_{4}$ is superspecial if and only if $p \equiv 9 \pmod{10}$.
\end{enumerate}
\end{Cor}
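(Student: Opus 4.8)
The plan is to invoke Nygaard's criterion (Theorem \ref{thm:Nygaard}): since each of these three curves is rigid (no parameters), it suffices to compute its Cartier--Manin matrix $M=(\gamma_{ip-j})_{1\le i,j\le 4}$ explicitly and decide when $M=0$. The decisive simplification is that for all three curves the half-power $f(x)^{e}$ with $e=(p-1)/2$ is \emph{sparse}. Indeed, writing $f$ for the defining polynomial in each case,
\[
(x^{9}+x)^{e}=\sum_{k=0}^{e}\binom{e}{k}x^{e+8k},\quad
(x^{10}+x)^{e}=\sum_{k=0}^{e}\binom{e}{k}x^{e+9k},\quad
(x^{10}+1)^{e}=\sum_{k=0}^{e}\binom{e}{k}x^{10k},
\]
so that only the coefficients $\gamma_{i}$ with $i\equiv c \pmod m$ can be nonzero, where $(c,m)=(e,8),(e,9),(0,10)$ for Types \textbf{8}, \textbf{9}, \textbf{10} respectively, and in that case $\gamma_{c+mk}=\binom{e}{k}$.

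First I would record the elementary but crucial fact that $\binom{e}{k}\not\equiv 0\pmod p$ for every $0\le k\le e$: since $e<p$, none of the factors in $\binom{e}{k}=e!/(k!(e-k)!)$ is divisible by $p$. Hence every monomial actually occurring in $f(x)^e$ has a coefficient that is a unit mod $p$, and $M=0$ is equivalent to the purely combinatorial statement that \emph{none} of the indices $ip-j$ ($1\le i,j\le 4$) is an exponent occurring in $f(x)^e$.

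Next I would dispose of the range condition: for $p\ge 7$ one checks directly that each $ip-j$ lies in $[p-4,\,4p-1]\subseteq[c,\,n]$ and that, whenever $ip-j\equiv c\pmod m$, the associated index $k=(ip-j-c)/m$ lies in $[0,e]$. Consequently $M=0$ if and only if $ip\not\equiv c+j\pmod m$ for all $i,j\in\{1,2,3,4\}$, i.e.\ the sets $\{\,ip \bmod m : 1\le i\le 4\,\}$ and $\{\,c+j \bmod m : 1\le j\le 4\,\}$ are disjoint. This reduces the whole problem to a finite verification over the residue of $p$: modulo $m=10$ for Type \textbf{10}, and modulo $2m=16$ and $2m=18$ for Types \textbf{8} and \textbf{9}, the latter moduli being forced because the shift $c=e=(p-1)/2$ depends on $p$ beyond its class mod $m$. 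Running through the admissible residue classes (e.g.\ $p \bmod 16\in\{1,3,5,7,9,11,13,15\}$ for Type \textbf{8}) and comparing the two four-element sets singles out exactly $p\equiv -1,9\pmod{16}$, $p\equiv 8\pmod 9$, and $p\equiv 9\pmod{10}$, as claimed.

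I expect the main obstacle to be bookkeeping rather than conceptual: one must carry out the disjointness check correctly across all residue classes while keeping track of the extra dependence on $p \bmod 2m$ coming from the shift $c=e$ in Types \textbf{8} and \textbf{9}. As a cross-check, and an alternative derivation for two of the three cases, note that over $k$ one has $y^2=x^{10}+1\cong y^2=(x^5-1)(x^5+1)$ and $y^2=x^9+x\cong y^2=x(x^4-1)(x^4+1)$, so Types \textbf{10} and \textbf{8} are the curves $C$ and $D$ from the previous subsection with $r=5$ and $r=4$ at $\lambda=-1$; their Cartier--Manin entries may then be read off from Proposition \ref{alpha} and Corollary \ref{divisible}, which must yield the same congruence conditions.
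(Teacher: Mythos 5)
Your proposal is correct, and I verified the residue computations it calls for: for Type \textbf{10} the sets $\{ip \bmod 10\}$ and $\{j \bmod 10\}$ ($1 \le i,j \le 4$) are disjoint exactly when $p \equiv 9 \pmod{10}$; for Type \textbf{8} the disjointness mod $8$ (with shift $e \equiv (r-1)/2 \pmod 8$, $r = p \bmod 16$) holds exactly for $r \in \{9,15\}$; and for Type \textbf{9} it holds exactly for $p \equiv 17 \pmod{18}$, which for odd $p$ is the same as $p \equiv 8 \pmod 9$. The range check $[p-4,4p-1] \subseteq [c, c+me]$ indeed needs only $p \ge 7$, and the unit-coefficient observation $\binom{e}{k} \not\equiv 0 \pmod p$ is right since $e < p$.

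However, your route is genuinely different from the paper's. The paper does not compute any Cartier--Manin matrix for these three curves: for cases (1) and (2) it observes that the curves are isomorphic to $y^2 = x^9 - x$ and $y^2 = x^9 - 1$ and directly cites Valentini's theorem on hyperelliptic curves with zero Hasse--Witt matrix; for case (3) it splits $y^2 = x^{10}-1$ by the two involutions $(x,y) \mapsto (-x,\pm y)$ into the genus-$2$ quotients $y_1^2 = x^5 - 1$ and $y_2^2 = x(x^5-1)$, notes that $J(H)$ is $(2,2,2,2)$-isogenous to the product of their Jacobians, and cites Ibukiyama--Katsura--Oort for when those genus-$2$ curves are superspecial. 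What the paper's approach buys is brevity, at the cost of leaning on two external results and, in case (3), on the fact that superspeciality is preserved under prime-to-$p$ isogenies (a nontrivial point, usually argued via $a$-numbers). What your approach buys is a uniform, self-contained, purely combinatorial argument covering all three cases at once: you are in effect inlining the proof of the special cases of Valentini's theorem you need, and you never need the isogeny decomposition or its invariance property. Your closing cross-check (Types \textbf{8} and \textbf{10} as the curves $D$ with $r=4$ and $C$ with $r=5$ at $\lambda = -1$, read off via Proposition \ref{alpha}) is also consistent with the paper's framework, though the paper itself never uses it here.
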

\begin{proof}
The curves in the cases (1) and (2) are isomorphic to
\[
    y^2 = x^9 - x \quad {\rm and } \quad y^2 = x^9 - 1
\]
respectively, and hence the assertions for these cases are shown by \cite[Theorem 2]{Robert}.
Moreover, the curve in the case (3) is isomorphic to $H: y^2 = x^{10} - 1$, which has two involutions
\[
    \sigma: (x,y) \mapsto (-x,y), \quad {\rm and} \quad \tau: (x,y) \mapsto (-x,-y).
\]
Then, the quotient curves $C_1 := H/\langle\sigma\rangle$ and $C_2 := H/\langle\tau\rangle$ are given by
\[
    C_1: y_1^2 = x^5 - 1 \quad {\rm and} \quad C_2: y_2^2 = x(x^5-1)
\]
respectively, and these curves are both superspecial if and only if $p \equiv 9 \pmod{10}$ by \cite{IKO}. Since the Jacobian of $H$ is $(2,2,2,2)$-isogenous to $J(C_1) \times J(C_2)$, we have that $H$ is superspecial if and only if $p \equiv 9 \pmod{10}$.
\end{proof}


\subsection{Review on results by Ohashi-Kudo-Harashita}\label{subsec:V4}

As noted in Section \ref{sec:intro}, the computational enumeration of superspecial curves is an important problem, and Ohashi-Kudo-Harashita~\cite{OKH} proposed an algorithm specific to the case of genus-$4$ hyperelliptic curves $H$ with $\mathrm{Aut}(H) \supset \mathbf{V}_4$, i.e., types {\bf 2-1}, {\bf 4-1}, {\bf 5}, {\bf 6}, {\bf 8}, and {\bf 10} in Table \ref{table:aut}.
Their idea is to reduce the enumeration of superspecial $H$'s into that of special pairs of superspecial genus-$2$ curves obtained as the quotients of $H$ by non-hyperelliptic involutions.

In the following, we briefly review the idea and their enumeration algorithm.
The target curves are of the form $y^2 = f(x^2)$ for some square-free polynomial $f(x) \in k[x]$ of degree $5$, as in type {\bf 2-1} of Table \ref{table:aut}.
Each $H$ of the target curves has non-hyperelliptic involutions $(x,y) \mapsto (-x,y)$ and $(x,y) \mapsto (-x,-y)$, whence the quotients of $H$ by them are given by $y_1^2 = f(x)$ and $y_2^2 = x f(x)$ respectively.
This implies that any genus-$4$ hyperelliptic curve with automorphism group containing $\mathbf{V}_4$ is realized as the normalization of the fiber product over $\mathbb{P}^1$ of two genus-$2$ curves sharing exactly $5$ ramification points, and it is straightforward that the converse also holds.
Moreover, transforming $2$ among ramification points of $H$ into $\{\pm 1\}$, we may assume that $H$ is of the following normal form
\begin{equation}\label{eq:hyp}
	H: y^2 =f(x^2)= (x^2-1)(x^2-c_2)(x^2-c_3)(x^2-c_4)(x^2-c_5)
\end{equation}
with $f(x)=(x-1)(x-c_2)(x-c_3)(x-c_4)(x-c_5)$ for some $c_i \in k \!\smallsetminus\! \{ 0,1\}$.
Moreover, we can transform the two quotient curves into the following Rosenhain forms:
\begin{align}
     C_1 &: y_1^2 = f_1(x) = x(x-1)(x-\lambda_1)(x-\lambda_2)(x-\lambda_3), \label{eq:C1}\\
     C_2 &: y_2^2 = f_2(x) = x(x-1)(x-\lambda_1)(x-\lambda_2)(x-\lambda_3'),\label{eq:C2}
\end{align}
via $x \mapsto \frac{(x-c_2)(c_3-1)}{(x-1)(c_3-c_2)}$,
where $\lambda_1$, $\lambda_2$, $\lambda_3$, and $\lambda_3'$ are mutually distinct elements in $k \!\smallsetminus\! \{ 0, 1 \}$ 
given by 
\begin{equation}\label{eq:lambdai}
\begin{split}
    \lambda_1 \!=\! \frac{(c_4-c_2)(c_3-1)}{(c_4-1)(c_3-c_2)}, \ \lambda_2\!=\! \frac{(c_5-c_2)(c_3-1)}{(c_5-1)(c_3-c_2)},\
    \lambda_3 \!=\! \frac{c_2(c_3-1)}{c_3-c_2}, \ \lambda_3'\!=\! \frac{c_3-1}{c_3-c_2}.
\end{split}
\end{equation}
Namely, $1$, $c_2$, $c_3$, $c_4$, $c_5$, $0$, and $\infty$ are mapped into $\infty$, $0$, $1$, $\lambda_1$, $\lambda_2$, $\lambda_3$, and $\lambda_3'$ respectively.
Note that, given $\lambda_1$, $\lambda_2$, $\lambda_3$, and $\lambda_3'$, we can reconstruct $c_2$, $c_3$, $c_4$, and $c_5$ through the map $x \mapsto \frac{x-\lambda_3'}{x-\lambda_3}$, explicitly
\begin{equation}\label{eq:ci}
    	c_2 = \frac{\lambda_3'}{\lambda_3}, \quad c_3 = \frac{1-\lambda_3'}{1-\lambda_3}, \quad c_4 = \frac{\lambda_1-\lambda_3'}{\lambda_1-\lambda_3}, \quad c_5 = \frac{\lambda_2-\lambda_3'}{\lambda_2-\lambda_3}.
\end{equation}
Note that exchanging $\lambda_1$ and $\lambda_2$ produces the same equation \eqref{eq:hyp}.
Exchanging $\lambda_3$ and $\lambda_3'$, we have $y^2 = (x^2-1)(x^2-c_2^{-1})(x^2-c_3^{-1})(x^2-c_4^{-1})(x^2-c_5^{-1})$, which is isomorphic to \eqref{eq:hyp} via $(x,y) \mapsto (1/x,\sqrt{-c_2c_3c_4c_5} y/x^5)$.
{Namely, $H$ is specified by a pair $(\{ \lambda_1,\lambda_2\},\{\lambda_3,\lambda_3'\})$ of the sets $\{ \lambda_1,\lambda_2\}$ and $\{ \lambda_3,\lambda_3'\}$.}

Recall from \cite[Lemma 2]{OKH} that $H$ is superspecial if and only if $C_1$ and $C_2$ are both superspecial.
Based on the above discussion, Ohashi-Kudo-Harashita~\cite{OKH} presented the following algorithm to enumerate superspecial $H$'s:

\begin{Alg}[{\cite[Algorithm 1]{OKH}}]\label{alg:V4}
    ~
\begin{description}
\item[{\it Input:}] A rational prime $p\geq 7$.
\item[{\it Output:}] A list of representatives for the isomorphism classes of superspecial genus-$4$ hyperelliptic curves $H$ over $\overline{\mathbb{F}_p}$ with $\mathrm{Aut}(H) \supset \mathbf{V}_4$.
\end{description}
\begin{description}
\item[Step 1.]
Enumerate all isomorphism classes of superspecial curves of genus $2$ in characteristic $p$, and let $\mathsf{SSp}_2(p)$ be a list of the enumerated superspecial genus-$2$ curves.

\item[Step 2.] For each superspecial genus-$2$ curve $C$ in $\mathsf{SSp}_2(p)$, compute all $C_{\lambda,\mu,\nu}$ in Rosenhain form with $C_{\lambda,\mu,\nu} \cong C$, by translating $3$ among $6$ ramified points of $C \to {\mathbb P}^1$ to $\{0,1,\infty\}$.
Sorting the list of all genus-$2$ curves in Rosenhain form generated as above, detect all pairs $(C_{\lambda,\mu,\nu},C_{\lambda',\mu',\nu'})$ with $\# (\{ \lambda,\mu,\nu \} \cap \{ \lambda',\mu',\nu'\} ) = 2$.
Let $\mathsf{HowePairsList}$ be a list of the computed pairs.

\item[Step 3.] Classify isomorphism classes of $H$'s as in \eqref{eq:hyp} corresponding via \eqref{eq:ci} to the pairs $(C_1,C_2)$ in $\mathsf{HowePairsList}$.
Finally, output a list of the computed isomorphism classes.
\end{description}
\end{Alg}

{
Ohashi-Kudo-Harashita estimated in \cite{OKH} that the complexity of Algorithm \ref{alg:V4} is ${O}(p^4)$ operations in $\mathbb{F}_{p^4}$.
More precisely, the complexities of Steps 1, 2, and 3 are estimated as $O(p^3)$, $O(p^3)$, and $O(p^4)$ respectively.
We can estimate in fact that the complexity of Step 3 is $O(p^4)$ operations in $\mathbb{F}_{p^2}$ as follows:
Ohashi's recent result~\cite[Main Theorem A]{Ohashi} implies that $\lambda, \mu, \nu \in \mathbb{F}_{p^2}$ for any superspecial $C_{\lambda,\mu,\nu}$ computed in Step 2. 
In Step 3, we need to test whether two hyperelliptic curves of genus $4$ are isomorphic or not in many times, and it would be expected from \cite[Remark 4]{OKH} that the number of required isomorphy tests is $O(p^4)$.
In general, testing isomorphy of two hyperelliptic curves of genus $4$ is done by computing a Gr\"{o}bner basis (with respect to a fast monomial ordering) for a polynomial system with $5$ variables and maximal total degree $10$, and its complexity is $O(1)$ operations in any common base field of the two curves, see \cite[Section 2.1]{OKH} for details.
Therefore, the complexity of Step 3 is $O(p^4)$ operations in $\mathbb{F}_{p^2}$.
If we restrict ourselves to the case where $\mathrm{Aut}(H) \cong \mathbf{V}_4$ in Step 3, the complexity of Step 3 is $O(p^3)$ operations in $\mathbb{F}_{p^2}$, so that the total complexity is $O(p^3)$ operations in $\mathbb{F}_{p^2}$.}

Ohashi-Kudo-Harashita implemented Algorithm \ref{alg:V4} on Magma, and succeeded in enumerating superspecial $H$'s with $\mathrm{Aut}(H)\cong \mathbf{V}_4$ for $p$ up to $200$.

\section{Main results}

In this section, we first construct a feasible algorithm for enumerating superspecial genus-$4$ hyperelliptic curves $H$ with $\mathrm{Aut}(H) \supset \mathbf{D}_4$, as a simpler version of Algorithm \ref{alg:V4}.
We also present more efficient algorithms specific to the cases where $\mathrm{Aut}(H) \supset \mathbf{D}_8$ and $\mathrm{Aut}(H) \supset \mathbf{D}_{10}$, by using Cartier-Manin matrices together with the theory of Gauss' hypergeometric series.

\subsection{The case where ${\rm Aut} \cong {\mathbf D}_4$}\label{subsec:D4}

Let $H$ be a hyperelliptic curve of genus $4$ over $k$ with $\mathrm{Aut}(H) \supset \mathbf{D}_4$.
A naive way to enumerate superspecial $H$'s is to proceed with the following:
Apply Steps 1 and 2 of Algorithm \ref{alg:V4}, detect $H$ with $\mathrm{Aut}(H) \supset \mathbf{D}_4$ from the list $\mathsf{HowePairsList}$ by computing the automorphism group of each curve, and then classify the isomorphism classes of detected ones as in the original Step 3 in $O(p^4)$ operations in $\mathbb{F}_{p^4}$.
However, in fact, we can simplify Step 2 (e.g., we need not compute any automorphism group), and moreover the complexity of the isomorphism classification can be estimated as $O(p^2)$.
A key lemma for simplifying Step 2 is the following:

\begin{Lem}\label{lem:D4}
With notation as above, $H$ is birational to $C_1 \times_{\mathbb{P}^1} C_2$ for some genus-$2$ curves $C_1$ and $C_2$ over $k$ with $C_1 \cong C_2$ that share exactly $5$ ramification points.
In fact, we can take equations for $C_1$ and $C_2$ to be ones in Rosenhain form $C_{\lambda_1,\lambda_2,\lambda_3}$ and $C_{\lambda_1,\lambda_2,\lambda_3'}$ with $\lambda_3 \neq \lambda_3'$ satisfying $c_2 c_3 = c_4 c_5 = 1$, where $c_2$, $c_3$, $c_4$, and $c_5$ are given in \eqref{eq:ci}.

Conversely, if $C_{\lambda_1,\lambda_2,\lambda_3}$ and $C_{\lambda_1,\lambda_2,\lambda_3'}$ with $\lambda_3 \neq \lambda_3'$ satisfy $c_2 c_3 = c_4 c_5 = 1$,
then they are isomorphic to each other, and $ C_{\lambda_1,\lambda_2,\lambda_3}\times_{\mathbb{P}^1} C_{\lambda_1,\lambda_2,\lambda_3'}$ is birational to a hyperelliptic curve of genus $4$ whose automorphism group contains $\mathbf{D}_4$.
\end{Lem}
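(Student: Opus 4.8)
The plan is to build everything around the dictionary of Section~\ref{subsec:V4} between genus-$4$ hyperelliptic curves with $\mathrm{Aut}\supseteq\mathbf{V}_4$ and fibre products of genus-$2$ curves, and to realise the passage from $\mathbf{V}_4$ to $\mathbf{D}_4$ as a single explicit order-$4$ automorphism of $H$. Concretely, I would treat the converse direction first, since it is the computational core, and then read the forward direction off the same construction.

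For the converse, suppose $\lambda_1,\lambda_2,\lambda_3,\lambda_3'$ with $\lambda_3\neq\lambda_3'$ are given and reconstruct $c_2,c_3,c_4,c_5$ by \eqref{eq:ci}; by the discussion of Section~\ref{subsec:V4} the fibre product $C_{\lambda_1,\lambda_2,\lambda_3}\times_{\mathbb{P}^1}C_{\lambda_1,\lambda_2,\lambda_3'}$ is birational to the curve $H$ of \eqref{eq:hyp}, which already carries $\sigma\colon(x,y)\mapsto(-x,y)$ and $\sigma\iota_H\colon(x,y)\mapsto(-x,-y)$. The key computation is that the hypothesis $c_2c_3=c_4c_5=1$ yields the identity $f(1/t)=-f(t)/t^5$ for $f(t)=(t-1)(t-c_2)(t-c_3)(t-c_4)(t-c_5)$: indeed $c_2c_3=1$ gives $(1-c_2t)(1-c_3t)=(t-c_2)(t-c_3)$ and similarly for $c_4,c_5$, while the factor $1-t=-(t-1)$ contributes the sign. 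Setting $t=x^2$ gives $F(1/x)=-F(x)/x^{10}$ for $F(x)=f(x^2)$, so $\phi\colon(x,y)\mapsto(1/x,\sqrt{-1}\,y/x^5)$ is a well-defined automorphism of $H$. One checks directly that $\phi^2=\iota_H$ and $\sigma\phi\sigma^{-1}=\phi^{-1}$, whence $\langle\sigma,\phi\rangle\cong\mathbf{D}_4$ and $\mathrm{Aut}(H)\supseteq\mathbf{D}_4$.

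The same $\phi$ supplies the isomorphism of the two genus-$2$ quotients: from $\sigma\phi\sigma^{-1}=\phi^{-1}$ one obtains $\phi\sigma\phi^{-1}=\sigma\iota_H$, so $\phi$ conjugates $\langle\sigma\rangle$ to $\langle\sigma\iota_H\rangle$ and hence descends to an isomorphism $H/\langle\sigma\rangle\xrightarrow{\sim}H/\langle\sigma\iota_H\rangle$; by the reconstruction \eqref{eq:ci} these two quotients are the given curves $C_{\lambda_1,\lambda_2,\lambda_3}$ and $C_{\lambda_1,\lambda_2,\lambda_3'}$, so they are isomorphic. For the forward direction I would run this backwards. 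If $\mathrm{Aut}(H)\supseteq\mathbf{D}_4$, then $\iota_H$ is the centre of $\mathbf{D}_4$ and $\mathbf{D}_4/\langle\iota_H\rangle\cong\mathbf{V}_4\subseteq\overline{\mathrm{Aut}}(H)$; choosing a reduced involution that lifts to an order-$2$ element and realising it as $x\mapsto-x$ puts $H$ in the form \eqref{eq:hyp}, while the complementary reduced involution commutes with $x\mapsto-x$ and is therefore of the shape $x\mapsto\mu/x$, which after rescaling $x$ I normalise to $x\mapsto1/x$. Invariance of the branch locus of $H$ under $x\mapsto1/x$ forces the root set $\{1,c_2,c_3,c_4,c_5\}$ of $f$ to be stable under $t\mapsto1/t$; since $t\mapsto1/t$ has only the fixed points $\pm1$ and the number of roots is odd, exactly one root is self-inverse, and after normalising it to $1$ the remaining four pair up as $c_2c_3=c_4c_5=1$. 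Passing through \eqref{eq:ci} gives the asserted condition and the identification of $C_1,C_2$.

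The step I expect to be the main obstacle is the normalisation bookkeeping in the forward direction rather than any hard computation. I must ensure that $x\mapsto-x$ is chosen among the reduced involutions lifting to order $2$, so that the order-$4$ generator of $\mathbf{D}_4$ is attached to the inversion $x\mapsto1/x$ and not to $x\mapsto-x$; and I must arrange the unique self-inverse root of $f$ to be $1$ rather than $-1$, which is achieved by the auxiliary rescaling $x\mapsto\sqrt{-1}\,x$ that fixes $x\mapsto-x$ and moves a root at $-1$ to $1$. Once these choices are fixed, together with the verification that no $c_i$ degenerates to $0$, $1$, or $\infty$ so that \eqref{eq:ci} and \eqref{eq:lambdai} remain valid, the algebra producing $\phi$ and the dihedral relations is routine.
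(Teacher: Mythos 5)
Your proposal is correct in substance, and it routes the argument differently from the paper. The paper proves the forward direction by quoting the classification (Theorem \ref{thm:app}, type {\bf 4-1}): it reduces the degree-$9$ normal form \eqref{eq:D4type2} to the degree-$10$ form \eqref{eq:D4type1} via Lemma \ref{lem:type1type2}, applies $(x,y)\mapsto(\sqrt{-1}x,\sqrt{-1}y/x^5)$, and factors the resulting palindromic octic into two reciprocal quartics, which yields $c_2c_3=c_4c_5=1$ directly; the converse is then dismissed as ``straightforward,'' with the isomorphism $C_1\cong C_2$ given only as the bare formula $(x,y)\mapsto(1/x,\sqrt{-c_2c_3c_4c_5}\,y/x^3)$. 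You invert this emphasis: your converse is worked out in full detail --- the identity $t^5f(1/t)=-f(t)$ from $c_2c_3=c_4c_5=1$, the explicit order-$4$ lift $\phi\colon(x,y)\mapsto(1/x,\sqrt{-1}\,y/x^5)$ with $\phi^2=\iota_H$ and $\sigma\phi\sigma^{-1}=\phi^{-1}$, and the descent of $\phi$ through $\phi\sigma\phi^{-1}=\sigma\iota_H$ to an isomorphism $H/\langle\sigma\rangle\cong H/\langle\sigma\iota_H\rangle$ --- all of which I have checked and which constitutes an honest proof of the part the paper leaves implicit, with the added benefit of explaining $C_1\cong C_2$ conceptually (conjugacy of the two non-hyperelliptic involutions inside $\mathbf{D}_4$). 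Your forward direction replaces the appeal to the normal forms by branch-locus combinatorics: an inversion-stable $5$-element root set in the $t=x^2$ coordinate has exactly one self-inverse root (the fixed points of $t\mapsto1/t$ being $\pm1$ and $5$ being odd), and your rescaling bookkeeping is sound, since negating an inversion-stable set preserves inversion-stability while moving $-1$ to $1$. Choosing the reduced involution that lifts to order $2$ lands you directly in the degree-$10$ model, so your argument absorbs the content of Lemma \ref{lem:type1type2}; note this lifting-order condition is also exactly what excludes type {\bf 4-2} (for $\mathbf{Q}_8$ no reduced involution lifts to an order-$2$ automorphism), so it is essential and not mere bookkeeping.

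One step should be shored up: you assert without proof that $\iota_H$ is the centre of the given copy of $\mathbf{D}_4$, i.e., that $\iota_H\in\mathbf{D}_4$. For an abstract subgroup $G\cong\mathbf{D}_4$ of $\mathrm{Aut}(H)$ this is not automatic --- a priori $G$ could meet $\langle\iota_H\rangle$ trivially and embed into $\overline{\mathrm{Aut}}(H)\subset\mathrm{PGL}_2(k)$, which does contain dihedral groups of order $8$. It is in fact true here, but verifying it requires the classification: by Theorem \ref{thm:app} the groups $\mathrm{Aut}(H)$ containing a copy of $\mathbf{D}_4$ are $\mathbf{D}_4$, $\mathbf{D}_8$, and $\mathbf{C}_{16}\rtimes\mathbf{C}_2$, and in each of these every element of order $4$ squares to the unique central involution, which must be $\iota_H$; hence every $\mathbf{D}_4$-subgroup contains $\iota_H$. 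Since you thereby lean on Theorem \ref{thm:app} anyway, the most economical repair is to do what the paper does and take its normal forms as the starting point of the forward direction; alternatively, insert the case check above. With that one sentence added, your proof is complete and, on the converse half, strictly more detailed than the paper's.
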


\begin{proof}
    Recall from Theorem \ref{thm:app} that $H$ is isomorphic to
    \begin{align}
    y^2 &= x^{10} + A x^8 + B x^6 + B x^4 + A x^2 + 1,\ \mbox{or} \label{eq:D4type1}\\
    y^2 &= x^{9} + A x^7 + B x^5 + A x^3 + x \label{eq:D4type2}
    \end{align}
    for some $A, B \in k$.
    By Lemma \ref{lem:type1type2} below, it suffices to consider the first equation \eqref{eq:D4type1} only, which is transformed into
    \begin{align*}
        y^2 &= x^{10} - A x^8 + B x^6 - B x^4 + A x^2 -1\\
        &= (x^2-1)(x^8 + (1-A)x^6 + (-A + B + 1) x^4 + (1-A)x^2 + 1)
    \end{align*}
    by $(x,y) \to (\sqrt{-1}x,\sqrt{-1}y/x^5)$.
    Putting $1-A = a+b$ and $-A+B-1 = a b$, we can write the right hand side as
    \[
    (x^2-1)(x^4 + a x^2 + 1)(x^4 + b x^2 + 1) = (x^2-1)(x^2 - c_2)(x^2- c_3)(x^2- c_4)(x^2-c_5)
    \]
    for some $c_2, c_3, c_4, c_5 \in k \!\smallsetminus\! \{ 0, 1 \}$ with $c_2c_3 = c_4 c_5 = 1$.
    As in Subsection \ref{subsec:V4}, $H$ is birational to the fiber product $C_1 \times_{\mathbb{P}^1} C_2$ of the two genus-$2$ curves $C_1: y_1^2 = f(x) = (x-1)(x-c_2)(x-c_3)(x-c_4)(x-c_5)$ and $C_2 : y_2^2 = x f(x)$,
     which share $5$ ramification points precisely.
     By $c_2c_3 = c_4 c_5 = 1$, these $C_1$ and $C_2$ are isomorphic to each other via the map $(x,y) \mapsto (1/x,\sqrt{-c_2c_3c_4c_5}y/x^3)$.
     Moreover, they are isomorphic to \eqref{eq:C1} and \eqref{eq:C2} respectively, via \eqref{eq:lambdai} and \eqref{eq:ci}.
     We have proved the first assertion, and it is straightforward that the second assertion (the converse of the first assertion) holds.
\end{proof}

\if 0
\begin{Rmk}
The converse of Lemma \ref{lem:D4} does not hold in general.
Namely, there may exist a genus-$4$ hyperelliptic curve $H$ with $\mathrm{Aut}(H) \cong \mathbf{V}_4$ associated to genus-$2$ curves $C_1$ and $C_2$ satisfying $C_1 \cong C_2$.
\end{Rmk}
\fi

\begin{Lem}\label{lem:type1type2}
    Any genus-$4$ hyperelliptic curve given by \eqref{eq:D4type2} is isomorphic to one given by \eqref{eq:D4type1}.
\end{Lem}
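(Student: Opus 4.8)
The plan is to exhibit the isomorphism as a single fractional linear change of the $x$-coordinate. Both normal forms are invariant under the same copy of $\mathbf{V}_4$ in $\mathrm{PGL}_2(k)$, generated by $\sigma_1\colon x\mapsto -x$ and $\sigma_2\colon x\mapsto 1/x$; the remaining nontrivial element is $\sigma_3\colon x\mapsto -1/x$, and the fixed-point pairs of $\sigma_1,\sigma_2,\sigma_3$ on $\mathbb{P}^1$ are $\{0,\infty\}$, $\{1,-1\}$ and $\{\sqrt{-1},-\sqrt{-1}\}$. Writing \eqref{eq:D4type2} as $y^2=x(x^8+Ax^6+Bx^4+Ax^2+1)$, its $10$ branch points are $0$, $\infty$ and the $8$ roots of the reciprocal even octic, so the only pair of branch points fixed by an involution of $\mathbf{V}_4$ is $\{0,\infty\}=\mathrm{Fix}(\sigma_1)$ (for generic $A,B$). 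For \eqref{eq:D4type1}, by contrast, one checks $g(\sqrt{-1})=0$, so the distinguished pair is $\{\sqrt{-1},-\sqrt{-1}\}=\mathrm{Fix}(\sigma_3)$. Hence the two forms differ only by which involution carries branch points, and I would look for $\phi\in\mathrm{PGL}_2(k)$ that normalizes $\mathbf{V}_4$ and satisfies $\phi\sigma_1\phi^{-1}=\sigma_3$.

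Concretely I would take
\[
\phi(x)=\sqrt{-1}\,\frac{1-x}{1+x},
\]
which sends $\{0,\infty\}\mapsto\{\sqrt{-1},-\sqrt{-1}\}$, $\{\sqrt{-1},-\sqrt{-1}\}\mapsto\{1,-1\}$ and $\{1,-1\}\mapsto\{0,\infty\}$; thus $\phi$ cyclically permutes the three involutions and in particular normalizes $\mathbf{V}_4$ with $\phi\sigma_1\phi^{-1}=\sigma_3$. Substituting $x=\phi^{-1}(x')=(\sqrt{-1}-x')/(\sqrt{-1}+x')$ into \eqref{eq:D4type2} and combining it with the scaling $y=y'/(\sqrt{-1}+x')^{5}$ (up to a constant) clears denominators and yields a new model $y'^2=g(x')$, whose branch locus is the $\phi$-image of the old one.

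It then remains to read off the shape of $g$. Since $\phi$ normalizes $\mathbf{V}_4$, the new branch locus is again $\mathbf{V}_4$-invariant; in particular it is stable under $x'\mapsto -x'$ and $x'\mapsto 1/x'$, so $g$ is even and palindromic. For generic $A,B$ neither $1$ nor $-1$ is a branch point of \eqref{eq:D4type2}, so $\phi$ maps no branch point to $0$ or $\infty$ and $g$ has degree exactly $10$ with nonzero constant term; normalizing the leading coefficient to $1$ by absorbing a constant into $y'$, an even palindromic monic degree-$10$ polynomial is automatically of the form $x'^{10}+A'x'^8+B'x'^6+B'x'^4+A'x'^2+1$, i.e.\ \eqref{eq:D4type1}. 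I expect the only real obstacle to be bookkeeping rather than ideas: one must confirm that the $y$-scaling genuinely yields a polynomial model and handle the finitely many degenerate specializations of $(A,B)$ (those for which $1,-1$ or $\pm\sqrt{-1}$ become branch points, where $\phi$ would push a branch point to $0$ or $\infty$). A safe alternative that sidesteps all of this is to substitute $\phi$ directly, obtain $A'$ and $B'$ as explicit rational functions of $A,B$, and verify by a short computation that the resulting degree-$10$ polynomial is square-free and in normal form; this simultaneously produces the isomorphism and certifies the claim.
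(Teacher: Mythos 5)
Your proposal is correct, and it reaches the same goal as the paper through the same basic mechanism --- a single fractional-linear change of $x$ plus the standard $y$-rescaling --- but the route is genuinely different in how the transformation is found and verified. The paper argues computationally: it factors the right-hand side of \eqref{eq:D4type2} as $x(x^4-ax^2+1)(x^4-bx^2+1)=x(x^2-c^2)(x^2-c^{-2})(x^2-d^2)(x^2-d^{-2})$ and applies $x\mapsto\frac{1+x}{1-x}$, which sends the ten branch points $\{\infty,0,\pm c^{\pm1},\pm d^{\pm1}\}$ to $\{\pm1,\pm\xi^{\pm1},\pm\eta^{\pm1}\}$, then expands. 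You instead work equivariantly: you locate the distinguished branch pair of each normal form as a fixed-point set of an involution in the common $\mathbf{V}_4$ ($\{0,\infty\}$ for \eqref{eq:D4type2}, $\{\pm\sqrt{-1}\}$ for \eqref{eq:D4type1}, and your observation that $\pm\sqrt{-1}$ are always roots of $x^{10}+Ax^8+Bx^6+Bx^4+Ax^2+1$ is correct), pick $\phi$ in the normalizer of $\mathbf{V}_4$ with $\phi\sigma_1\phi^{-1}=\sigma_3$, and deduce the shape of the new model from the $\mathbf{V}_4$-invariance of its branch divisor rather than from explicit factorization. What your approach buys: it explains \emph{why} a suitable M\"obius map exists, and your choice $\phi(x)=\sqrt{-1}\,\frac{1-x}{1+x}$ actually lands literally in the palindromic form \eqref{eq:D4type1} (the factor $x'^2+1$ has constant term $+1$, so the monic decic has constant term $1$ and is genuinely even and palindromic), whereas the paper's map $x\mapsto\frac{1+x}{1-x}$ produces the even \emph{anti}-palindromic polynomial $x^{10}+ax^8+bx^6-bx^4-ax^2-1$, so its closing sentence ``expanding \ldots we obtain an equation of the form \eqref{eq:D4type1}'' tacitly reuses the substitution $(x,y)\mapsto(\sqrt{-1}x,\sqrt{-1}y/x^5)$ from the proof of Lemma \ref{lem:D4}; your version avoids that extra step.

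One loose end in your write-up can be tied off completely: the ``degenerate specializations'' you defer (where $\pm1$ or $\pm\sqrt{-1}$ are branch points of \eqref{eq:D4type2}, so that $\phi$ would push a branch point to $0$ or $\infty$) form codimension-one loci in the $(A,B)$-plane, not finitely many points --- but they are vacuous for the lemma. Writing the octic as $x^4\bigl(w^2+Aw+(B-2)\bigr)$ with $w=x^2+x^{-2}$, the fibers of $x\mapsto w$ over $w=2$ and $w=-2$ are exactly $\{\pm1\}$ and $\{\pm\sqrt{-1}\}$, each with multiplicity two; hence if $\pm1$ or $\pm\sqrt{-1}$ is a root of $x^8+Ax^6+Bx^4+Ax^2+1$, it is automatically a double root, contradicting the square-freeness required for \eqref{eq:D4type2} to define a genus-$4$ curve. (One checks directly, e.g., $B=-2-2A$ forces $f(1)=f'(1)=0$.) So your generic-case argument in fact covers every curve in the family, and no fallback computation is needed.
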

\begin{proof}
    As in \eqref{eq:D4type1}, the right hand side of \eqref{eq:D4type2} is factored into
     \[
    x(x^4 - a x^2 + 1)(x^4 - b x^2 + 1) = x(x^2 - c^2)(x^2- c^{-2})(x^2-d^2)(x^2-d^{-2})
    \]
    for some $a,b \in k \!\smallsetminus\! \{ \pm 2 \}$ with $a \neq b$ and some $c,d \in k \!\smallsetminus\! \{ 0,\pm 1\}$ with $c \neq d$.
    Since the M\"{o}bius transformation $\phi : x \to \frac{1+x}{1-x}$ maps $\{ \infty, 0, \pm c, \pm c^{-1}, \pm d, \pm d^{-1} \}$ to $\{ \pm 1, \pm \xi, \pm \xi^{-1}, \pm \eta, \pm \eta^{-1} \}$ with $\xi := \phi (c) $ and $\eta := \phi(d)$, the genus-$4$ hyperelliptic curve given by \eqref{eq:D4type2} is isomorphic to
    \[
    y^2 = (x^2-1)(x^2 - \xi^2)(x^2 - \xi^{-2}) (x^2 - \eta^2)(x^2 - \eta^{-2}).
    \]
    Expanding the right hand side, we obtain an equation of the form \eqref{eq:D4type1}, as required.
\end{proof}

From Lemma \ref{lem:D4}, we not necessarily consider all genus-$2$ curves in Rosenhain form constructed in Step 2 of Algorithm \ref{alg:V4}:
It suffices to consider only Rosenhain forms $C_{\lambda_1,\lambda_2,\lambda_3}$ constructed from the {\it same} $C_{\lambda,\mu,\nu}$.
In addition, we may discard any pair of $C_{\lambda_1,\lambda_2,\lambda_3}$ and $C_{\lambda_1,\lambda_2,\lambda_3'}$ with $\lambda_3 \neq \lambda_3'$ if $(c_2,c_3,c_4,c_5)$ given in \eqref{eq:ci} does not satisfy $c_2 c_3 = c_4 c_5 = 1$.
This also means that we can detect pairs $(C_1,C_2)$ such that associated $H$ satisfies $\mathrm{Aut}(H) \supset \mathbf{D}_4$ {\it without} computing $\mathrm{Aut}(H)$.
Here, we write down an algorithm to enumerate superspecial $H$'s with $\mathrm{Aut}(H) \supset \mathbf{D}_4$, as a simpler version of Algorithm \ref{alg:V4}.

\begin{Alg}\label{alg:D4}
    ~
\begin{description}
\item[{\it Input:}] A rational prime $p\geq 7$.
\item[{\it Output:}] A list of representatives for the isomorphism classes of superspecial genus-$4$ hyperelliptic curves $H$ over $\overline{\mathbb{F}_p}$ with $\mathrm{Aut}(H) \supset \mathbf{D}_4$.
\end{description}
\begin{description}
\item[Step 1.] Proceed with the same as in Step 1 of Algorithm \ref{alg:V4}, but
we give a representative of each isomorphism class in Rosenhain form.
Let $\mathsf{SSp}_2(p)$ be a list of enumerated superspecial genus-$2$ curves in Rosenhain form.

\item[Step 2.] 
For each superspecial genus-$2$ curve $C_{\lambda,\mu,\nu}$, generate all $C_{\lambda',\mu',\nu'}$ with $\lambda',\mu',\nu' \in k \smallsetminus \{0,1\}$ such that $C_{\lambda,\mu,\nu} \cong C_{\lambda',\mu',\nu'}$, by translating $3$ among the $6$ ramified points of $C_{\lambda,\mu,\nu}\to {\mathbb P}^1$ to $\{0,1,\infty\}$.
Among $C_{\lambda',\mu',\nu'}$'s produced as above, detect ones satisfying $\# (\{ \lambda,\mu,\nu \} \cap \{ \lambda',\mu',\nu'\} ) = 2$.
For each detected one, when we write $C_{\lambda,\mu,\nu}$ and $C_{\lambda',\mu',\nu'}$ as $C_{\lambda_1,\lambda_2,\lambda_3}$ and $C_{\lambda_1,\lambda_2,\lambda_3'}$ respectively, compute an equation \eqref{eq:hyp} defining a hyperelliptic curve $H$ birational to $C_{\lambda_1,\lambda_2,\lambda_3} \times_{\mathbb{P}^1} C_{\lambda_1,\lambda_2,\lambda_3'}$ by \eqref{eq:ci}, and store it if $c_2 c_3 = c_4 c_5 = 1$.
Let $\mathsf{ListSSpHypCurvesD4}$ be a list of $H$'s computed as above.

\item[Step 3.] Classify isomorphism classes of $H$'s in $\mathsf{ListSSpHypCurvesD4}$.
Finally, output a list of the computed isomorphism classes.
\end{description}
\end{Alg}

\begin{Rmk}
    {In our implementation, we realize Step 1 by generating the superspecial Richelot isogeny graph, which can be computed with arithmetic of theta functions~\cite{DMPR}.
    Since this is not the main topic of this paper, we omit to write its concrete procedures, and some details will be explained in \cite{OOKYN}.}

    We also remark that our implementation conducts each isomorphism test in Step 3 with Magma's built-in function \texttt{IsIsomorphicHyperellipticCurves}, which was implemented by Lercier-Sijsling-Ritzenthaler~\cite{LSR21}.
\end{Rmk}






The correctness of Algorithm \ref{alg:D4} follows from Lemma \ref{lem:D4}.
The complexity of Step 1 is the same as that of Algorithm \ref{alg:V4}, i.e., $O(p^3)$ operations in $\mathbb{F}_{p^4}$.
Note that, for each $C_{\lambda,\mu,\nu}$ computed in Step 1, we have $\lambda, \mu,\nu \in \mathbb{F}_{p^2}\!\smallsetminus\!\{ 0,1\}$ by \cite[Main Theorem A]{Ohashi}.
Therefore, the complexity of Step 2 is $O(p^3)$ operations in $\mathbb{F}_{p^2}$.
To estimate the complexity of Step 3, we use the following lemma:

\begin{Prop}\label{lem:SelfCol}
    In Step 2 of Algorithm \ref{alg:D4}, if $\lambda_1$, $\lambda_2$, and $\lambda_3 $ are chosen uniformly at random from $\mathbb{F}_{p^2} \!\smallsetminus\! \{ 0, 1 \}$, then the probability that $C_{\lambda_1,\lambda_2,\lambda_3} \cong C_{\lambda_1,\lambda_2,\lambda_3'}$ for some $\lambda_3' \in \mathbb{F}_{p^2}$ with $\lambda_3 \neq \lambda_3'$ is $O(1/p^2)$.
\end{Prop}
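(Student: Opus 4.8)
The plan is to recast the isomorphy condition geometrically and then confine the set of ``bad'' triples to a proper subvariety of $\mathbb{A}^3$. Recall that the genus-$2$ curve $C_{\lambda,\mu,\nu}$ is determined by its set of six branch points $\{0,1,\infty,\lambda,\mu,\nu\}\subset\mathbb{P}^1$, and that two such curves are isomorphic over $k$ if and only if their branch loci lie in the same orbit under $\mathrm{PGL}_2(k)$. Hence $C_{\lambda_1,\lambda_2,\lambda_3}\cong C_{\lambda_1,\lambda_2,\lambda_3'}$ holds precisely when there exists $\rho\in\mathrm{PGL}_2(k)$ with
\[
\rho\bigl(\{0,1,\infty,\lambda_1,\lambda_2,\lambda_3\}\bigr)=\{0,1,\infty,\lambda_1,\lambda_2,\lambda_3'\}.
\]
Since such a $\rho$ is forced to carry three $\mathbb{F}_{p^2}$-rational branch points to three $\mathbb{F}_{p^2}$-rational ones, it lies in $\mathrm{PGL}_2(\mathbb{F}_{p^2})$ and $\lambda_3'\in\mathbb{F}_{p^2}$ automatically, so the whole analysis may be carried out over $\mathbb{F}_{p^2}$. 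Writing $S:=\{0,1,\infty,\lambda_1,\lambda_2\}$ for the five common points, $\rho$ must send exactly one source point onto the unmatched target $\lambda_3'$ and the remaining five bijectively onto $S$; I would split into two cases according to whether $\rho(\lambda_3)=\lambda_3'$ or $\rho(\lambda_3)\in S$.

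First I would treat the dominant case $\rho(\lambda_3)=\lambda_3'$, in which $\rho$ restricts to a self-bijection of $S$, i.e.\ $S$ admits a M\"{o}bius self-symmetry, necessarily non-trivial since $\lambda_3\neq\lambda_3'$ forces $\rho\neq\mathrm{id}$. For each of the finitely many non-identity permutations $\pi$ of $S$ realizable by a M\"{o}bius map, $\rho$ is pinned down by three point-correspondences and the remaining ones translate into explicit non-trivial polynomial relations between $\lambda_1$ and $\lambda_2$ (for instance $\lambda_1\lambda_2=1$ coming from the involution $x\mapsto 1/x$). The generic five-point configuration having trivial $\mathrm{PGL}_2$-stabilizer, these relations cut out a proper closed subset of dimension at most $1$ in the $(\lambda_1,\lambda_2)$-plane, carrying $O(p^2)$ points over $\mathbb{F}_{p^2}$; as $\lambda_3$ then ranges freely over $\mathbb{F}_{p^2}\smallsetminus\{0,1\}$ (with $\lambda_3'=\rho(\lambda_3)$), this case contributes a subvariety of dimension $2$ and hence $O(p^4)$ bad triples.

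Next I would dispose of the case $\rho(\lambda_3)\in S$. Fixing the combinatorial type of $\rho$ (which point of $S$ receives $\lambda_3$, which point is sent to $\lambda_3'$, and the induced bijection of the rest), the map $\rho$ is again determined by three of the resulting correspondences inside $S$, after which the condition $\rho(\lambda_3)\in S$ together with the last correspondence imposes further polynomial constraints on $(\lambda_1,\lambda_2,\lambda_3)$; a direct computation shows these define a proper closed subset of $\mathbb{A}^3$ of dimension at most $2$. Since there are only finitely many combinatorial types in either case, the set of bad triples is contained in a proper Zariski-closed subset of $\mathbb{A}^3$ of dimension at most $2$, which has $O((p^2)^2)=O(p^4)$ points over $\mathbb{F}_{p^2}$. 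Dividing by the size $(p^2-2)^3=\Theta(p^6)$ of the sample space yields the asserted probability $O(p^4/p^6)=O(1/p^2)$.

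The main obstacle is the bookkeeping of the case analysis: one must verify, for every combinatorial type of $\rho$, that the associated equations are genuinely non-trivial, so that the bad locus really is a proper closed subset rather than all of $\mathbb{A}^3$, and that the symmetric case above contributes dimension exactly $2$ (equivalently codimension $1$, which over $\mathbb{F}_{p^2}$ is what produces the factor $1/p^2$). The essential structural input that makes every type proper is the classical triviality of the $\mathrm{PGL}_2$-stabilizer of a generic point configuration on $\mathbb{P}^1$; granting this, the remainder is an organized count of $\mathbb{F}_{p^2}$-points on subvarieties.
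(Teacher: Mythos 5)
Your proof is correct, and it reaches the bound by a genuinely more geometric route than the paper's. The paper argues through the explicit Rosenhain reparametrizations: it writes down the triples $(\lambda,\mu,\nu)$ obtained from orderings $(a_1,\ldots,a_6)$ of $\{0,1,\infty,\lambda_1,\lambda_2,\lambda_3\}$, isolates the finitely many transformations with $\lambda=\lambda_1$ (e.g.\ $(\lambda,\mu,\nu)=(\lambda_1,\lambda_1/\lambda_2,\lambda_1/\lambda_3)$), and observes that $\#(\{\lambda_2,\lambda_3\}\cap\{\mu,\nu\})=1$ forces explicit codimension-one coincidences such as $\lambda_1=\lambda_2^2$ or $\lambda_1=\lambda_3^2$, each of probability $O(1/p^2)$ over $\mathbb{F}_{p^2}$, and concludes by a union bound over the finitely many transformations. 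You instead phrase isomorphy as $\mathrm{PGL}_2$-equivalence of the six-point branch loci, split according to whether $\rho(\lambda_3)=\lambda_3'$ or $\rho(\lambda_3)\in S$, and confine the bad triples to a bounded-degree subvariety of $\mathbb{A}^3$ of dimension at most $2$, giving $O(p^4)$ points over $\mathbb{F}_{p^2}$ against a sample space of size $\Theta(p^6)$; the skeleton (finitely many combinatorial types, each imposing a nontrivial algebraic condition, then a union bound) is the same, but your non-degeneracy inputs are soft where the paper's are explicit. Your case 1 is fully rigorous via the generic triviality of the $\mathrm{PGL}_2$-stabilizer of a five-point set (correctly five, not four, where the Klein four-group would intervene), while your case 2 rests on an asserted ``direct computation''; that assertion does hold and can be closed uniformly by a cross-ratio argument: among the five correspondences with known targets, choose four whose sources include $\lambda_3$; invariance of the cross-ratio under $\rho$ then equates a function genuinely non-constant in $\lambda_3$ with a function of $(\lambda_1,\lambda_2)$ alone, so the constraint is never identically satisfied. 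What the paper's explicit route buys in exchange is the concrete list of coincidence equations, which is what makes the estimate implementable and makes the per-triple finiteness of admissible $\lambda_3'$ (the Remark following the Proposition) immediate---though your framework also yields that finiteness, since each combinatorial type determines $\lambda_3'=\rho(s_0)$ uniquely.
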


\begin{proof}
As in \cite[Section 2.1]{Ohashi}, we have that $C_{\lambda_1,\lambda_2,\lambda_3} \cong C_{\lambda,\mu,\nu}$ if and only if
\[
    \lambda = \frac{(a_4-a_1)(a_2-a_3)}{(a_4-a_3)(a_2-a_1)},\ \mu = \frac{(a_5-a_1)(a_2-a_3)}{(a_5-a_3)(a_2-a_1)},\ \nu = \frac{(a_6-a_1)(a_2-a_3)}{(a_6-a_3)(a_2-a_1)},
\]
where $\{a_1,\ldots,a_6\} = \{0,1,\infty,\lambda_1,\lambda_2,\lambda_3\}$.
Here, if $a_i = \infty$, we mean that all the factors $(a_i-a_j)$ or $(a_j-a_i)$ are excluded from each of $\lambda$, $\mu$, and $\nu$.
Then, we see that
\[
    \lambda = \lambda_1, \quad \mu = \frac{\lambda_1}{\lambda_2}, \quad \nu = \frac{\lambda_1}{\lambda_3}
\]
in the case where $(a_1,\ldots,a_6) = (\infty,\lambda_1,0,1,\lambda_2,\lambda_3)$. 
Let us compute the probability that
\[
    \#(\{\lambda_2,\lambda_3\} \cap \{\mu,\nu\}) = 1.
\]
Then, we have that $\mu = \lambda_2$ or $\nu = \lambda_3$ (indeed, if $\mu = \lambda_3$, then we also obtain $\nu = \lambda_2$, which contradicts the above condition). Therefore, we need only consider the case that $\lambda_1 = \lambda_2^2$ or $\lambda_1 = \lambda_3^2$, and we see that this probability is $O(1/p^2)$ since all $\lambda_1,\lambda_2,\lambda_3$ belong to $\mathbb{F}_{p^2}\!\smallsetminus\!\{0,1\}$.


There exist two other transformations
\begin{align*}
    \lambda &= \lambda_1, \quad \mu = \frac{\lambda_2-\lambda_1}{\lambda_2-1}, \quad \nu = \frac{\lambda_3-\lambda_1}{\lambda_3-1},\\
    \lambda &= \lambda_1, \quad \mu = \lambda_1 \cdot \frac{\lambda_2-1}{\lambda_2-\lambda_1}, \quad \nu = \lambda_1 \cdot \frac{\lambda_3-1}{\lambda_3-\lambda_1}
\end{align*}
satisfying $\lambda = \lambda_1$ except for the trivial transformation. In these cases, the probability that $\#(\{\lambda_2,\lambda_3\} \cap \{\mu,\nu\}) = 1$ is also $O(1/p^2)$ as similar discussions.
\end{proof}
\begin{Rmk}
For a fixed $3$-tuple $(\lambda_1,\lambda_2,\lambda_3)$, the number of $\lambda'_3$ such that $C_{\lambda_1,\lambda_2,\lambda_3} \cong C_{\lambda_1,\lambda_2,\lambda_3'}$ is $O(1)$ since the number of transformations is $O(1)$.
\end{Rmk}

Since the number of superspecial genus-$2$ curves is $O(p^3)$ by \cite[Theorem 3.3]{IKO}, we obtain the following expectation from Proposition \ref{lem:SelfCol}:

\begin{Exp}\label{exp:D4}
The number of pairs $(C_{\lambda_1,\lambda_2,\lambda_3}, C_{\lambda_1,\lambda_2,\lambda_3'})$ of superspecial $C_{\lambda_1,\lambda_2,\lambda_3}$ and $C_{\lambda_1,\lambda_2,\lambda_3'}$ with $\lambda_3 \neq \lambda_3'$ is $O(p)$.
\end{Exp}

{
Assuming Expectation \ref{exp:D4}, we can estimate the complexity of Step 3 in Algorithm \ref{alg:D4} is $O(p^2)$ operations in $\mathbb{F}_{p^2}$ similarly to the case of Algorithm \ref{alg:V4}, so that the total complexity becomes $O(p^3)$ operations in $\mathbb{F}_{p^4}$.}

\subsection{The case where ${\rm Aut} \cong {\mathbf D}_8$}\label{subsec:D8}

In this subsection, we shall present an algorithm specific to the case where $\mathrm{Aut}(H) \supset \mathbf{D}_8$.
For this, we start with proving the following lemma:

\begin{Lem}\label{lem:D8form}
Any genus-4 hyperelliptic curve $H$ such that ${\rm Aut}(H) \supset \mathbf{D}_8$ can be written as
\[
    H_\lambda: y^2 = x(x^4-1)(x^4-\lambda)
\]
for some $\lambda \neq 0,1$.
\end{Lem}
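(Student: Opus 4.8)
The plan is to invoke the classification in Theorem~\ref{thm:app}: the hypothesis $\mathrm{Aut}(H) \supset \mathbf{D}_8$ singles out exactly Types \textbf{5} and \textbf{8} of Table~\ref{table:aut}, since $\mathbf{D}_8$ sits inside both $\mathbf{D}_8$ itself and $\mathbf{C}_{16}\rtimes\mathbf{C}_2$ (see Figure~\ref{fig:aut}). Both types are defined, up to isomorphism, by an equation of the single shape $y^2 = x^9 + A x^5 + x = x(x^8 + A x^4 + 1)$ for some $A \in k$, with $A = 0$ recovering Type \textbf{8}. So it suffices to transform this one-parameter family into the asserted normal form.

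First I would factor the octic. Setting $u = x^4$, I write $u^2 + A u + 1 = (u-r)(u-r^{-1})$, where $r + r^{-1} = -A$ and $r\,r^{-1} = 1$; such an $r \in k^\times$ exists because $k$ is algebraically closed. This gives
\[
    x^9 + A x^5 + x = x(x^4 - r)(x^4 - r^{-1}).
\]
Square-freeness of the defining polynomial (needed for $H$ to be a smooth genus-$4$ curve) forces the two quartic factors to be coprime and each separable, which rules out $r = r^{-1}$; hence $r \notin \{0, \pm 1\}$.

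Next I would normalize the first quartic factor by the scaling $x \mapsto c\,x$ with $c^4 = r$, again using that $k$ is algebraically closed to extract the fourth root. Under this substitution $x^4 - r \mapsto c^4(x^4 - 1)$ and $x^4 - r^{-1} \mapsto c^4(x^4 - r^{-2})$, so after absorbing the resulting constant into $y$ (a square root exists over $k$) the curve becomes $y^2 = x(x^4 - 1)(x^4 - \lambda)$ with $\lambda := r^{-2}$. Finally $\lambda \neq 0$ because $r \neq 0$, and $\lambda \neq 1$ because $r \neq \pm 1$; this is exactly the claimed form $H_\lambda$.

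The computation itself is routine; the only points demanding care are confirming that the hypothesis $\mathrm{Aut}(H)\supset\mathbf{D}_8$ captures both Types \textbf{5} and \textbf{8} (so that no case is missed), and checking that the smoothness/square-free constraint is precisely what rules out the degenerate values $\lambda \in \{0,1\}$. I expect no serious obstacle beyond bookkeeping of the scaling constant and the induced action on $y$.
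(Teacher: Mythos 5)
Your proposal is correct and follows essentially the same route as the paper's proof: read off the normal form $y^2 = x^9 + Ax^5 + x$ from Table \ref{table:aut}, factor it as $x(x^4-r)(x^4-r^{-1})$, and rescale $x$ and $y$ to reach $H_\lambda$ (your $\lambda = r^{-2}$ agrees with the paper's $\lambda = \alpha^8$ up to the harmless replacement $\lambda \leftrightarrow 1/\lambda$). You are in fact slightly more explicit than the paper on two points it leaves implicit, namely that Type \textbf{8} is subsumed as $A=0$ and that square-freeness is what excludes $r = \pm 1$ (the paper simply asserts $\alpha^4 \neq 0,\pm 1$).
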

\begin{proof}
Recall from Table \ref{table:aut} that $H$ is written as
\[
    H: y^2 = x^9 + Ax^5 + x = x(x^4-\alpha^4)(x^4-\alpha^{-4}),
\]
where $\alpha^4 + \alpha^{-4} = -A$ with $\alpha^4 \neq 0,\pm 1$. Setting $\lambda := \alpha^8$, we obtain an isomorphism
\[
    H \rightarrow H_\lambda\ ;\,(x,y) \mapsto (\alpha x,\alpha^{9/2}y),
\]
where $\alpha^8 \neq 0,1$ implies $\lambda \neq 0,1$, as desired.
\end{proof}

\begin{Rmk}\label{Alambda}
    {In the proof of Lemma \ref{lem:D8form}, the genus-$4$ hyperelliptic curve $ H : y^2 = x^9 + A x^5 + x$ is isomorphic to $H_{\lambda}$ for some $\lambda \in k \!\smallsetminus\! \{ 0,1 \}$ and vice versa, via the relation $\lambda + \frac{1}{\lambda} = A^2 -2$.}
\end{Rmk}

We shall use the above $H_{\lambda}$ for our purpose that we enumerate or find superspecial curves. Next, we investigate the Cartier-Manin matrix of $H_\lambda$. Write
\[
    \bigl\{\hspace{0.2mm}x(x^4-1)(x^4-\lambda)\bigr\}^e = \sum_{i=e}^{9e} \beta_ix^i, \quad e := \frac{p-1}{2},
\]
where $\beta_i \in k$, and then we obtain that $\beta_i = 0$ unless $i-e$ is multiple of 4.
By a straightforward computation, we can determine possibly non-zero entries in the Cartier-Manin matrix $M$ of $H_{\lambda}$, dividing the case into the following four cases, by the value of $p \bmod 8$:

\begin{itemize}
\item If $p \equiv 1 \pmod{8}$, then $e \equiv 0 \pmod{4}$, and we have
\[
    M = \begin{pmatrix}
        \beta_{p-1} & 0 & 0 & 0\\
        0 & \beta_{2p-2} & 0 & 0\\
        0 & 0 & \beta_{3p-3} & 0\\
        0 & 0 & 0 & \beta_{4p-4}
    \end{pmatrix}.
\]
\item If $p \equiv 3 \pmod{8}$, then $e \equiv 1 \pmod{4}$, and we have
\[
    M = \begin{pmatrix}
        0 & \beta_{p-2} & 0 & 0\\
        \beta_{2p-1} & 0 & 0 & 0\\
        0 & 0 & 0 & \beta_{3p-4}\\
        0 & 0 & \beta_{4p-3} & 0
    \end{pmatrix}.
\]
\item If $p \equiv 5 \pmod{8}$, then $e \equiv 2 \pmod{4}$, and we have
\[
    M = \begin{pmatrix}
        0 & 0 & \beta_{p-3} & 0\\
        0 & 0 & 0 & \beta_{2p-4}\\
        \beta_{3p-1} & 0 & 0 & 0\\
        0 & \beta_{4p-2} & 0 & 0
    \end{pmatrix}.
\]
\item If $p \equiv 7 \pmod{8}$, then $e \equiv 3 \pmod{4}$, and we have
\[
    M = \begin{pmatrix}
        0 & 0 & 0 & \beta_{p-4}\\
        0 & 0 & \beta_{2p-3} & 0\\
        0 & \beta_{3p-2} & 0 & 0\\
        \beta_{4p-1} & 0 & 0 & 0
    \end{pmatrix}.
\]
\end{itemize}

Here, recall from Theorem \ref{thm:Nygaard} that $H$ is superspecial if and only if $M = 0$, which can be determined by computing the gcd of four possible non-zero entries (as univariate polynomials in $\lambda$) given as above. In fact, it suffices to consider only two among the four entries in each case:

\begin{Prop}\label{D8ssp}
With notation as above, we have the following statements:
\begin{itemize}
\item If $p \equiv 1 \pmod{8}$, then $H_\lambda$ is superspecial if and only if $\beta_{3p-3} = \beta_{4p-4} = 0$.
\item If $p \equiv 3 \pmod{8}$, then $H_\lambda$ is superspecial if and only if $\beta_{3p-4} = \beta_{4p-3} = 0$.
\item If $p \equiv 5 \pmod{8}$, then $H_\lambda$ is superspecial if and only if $\beta_{3p-1} = \beta_{4p-2} = 0$.
\item If $p \equiv 7 \pmod{8}$, then $H_\lambda$ is superspecial if and only if $\beta_{3p-2} = \beta_{4p-1} = 0$.
\end{itemize}
\end{Prop}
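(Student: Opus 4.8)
The plan is to deduce the vanishing of \emph{all four} possibly-nonzero entries of the Cartier-Manin matrix $M$ from the vanishing of just the two listed in each case, using nothing more than the proportionality relation of Corollary \ref{divisible} together with $\lambda \neq 0$. Since $H_\lambda$ is precisely the curve $D : y^2 = x(x^r-1)(x^r-\lambda)$ with $r = 4$ and the same $\lambda$, its Cartier-Manin coefficients are the $\beta_i$ of Subsection \ref{subsec:D8}, so Corollary \ref{divisible} applies and gives, for every $0 \le k \le e$,
\[
    \beta_{5e-4k} = \lambda^{k}\,\beta_{5e+4k},
\]
because $(e-k)r + e = 5e - 4k$ and $(e+k)r + e = 5e + 4k$ when $r = 4$. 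The point is that $\lambda \neq 0$ makes $\lambda^{k}$ a unit, so $\beta_{5e-4k} = 0$ if and only if $\beta_{5e+4k} = 0$: each such index pair vanishes simultaneously.

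The next step is to observe that, in every residue class of $p$ modulo $8$, the four possibly-nonzero indices of $M$ (already displayed in the excerpt) are symmetric in pairs about $5e$. Writing the nonzero position in row $i$ as $\beta_{ip-\sigma(i)}$, the permutation $\sigma$ read off the four matrices always satisfies $\sigma(i)+\sigma(5-i)=5$; hence the indices in rows $i$ and $5-i$ sum to $5p-5 = 10e$, and so are of the shape $5e \mp 4k$ for a single $k$. I would then invoke the relation above on each of the two couples. For instance, when $p \equiv 7 \pmod 8$ the entries are $\beta_{p-4},\beta_{2p-3},\beta_{3p-2},\beta_{4p-1}$, and the relation yields $\beta_{p-4} = \lambda^{k_1}\beta_{4p-1}$ and $\beta_{2p-3} = \lambda^{k_2}\beta_{3p-2}$ for suitable $k_1,k_2$, whence $\beta_{p-4}=0 \iff \beta_{4p-1}=0$ and $\beta_{2p-3}=0 \iff \beta_{3p-2}=0$; so $M = 0$ exactly when $\beta_{3p-2} = \beta_{4p-1} = 0$. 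The three cases $p \equiv 1,3,5 \pmod 8$ are handled identically after relabelling the couples. Finally, Theorem \ref{thm:Nygaard} says $H_\lambda$ is superspecial if and only if $M = 0$, which converts the four pairwise equivalences into the four stated criteria.

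I do not expect a genuine obstacle, since the substance is entirely packaged in Corollary \ref{divisible} and the hypothesis $\lambda \neq 0$. The only step demanding care is the index bookkeeping: verifying in each of the four congruence classes that the entries genuinely pair into two couples symmetric about $5e$ (this is the uniform identity $\sigma(i)+\sigma(5-i)=5$), and that the exponent $k$ arising for each couple indeed satisfies $0 \le k \le e$ so that Corollary \ref{divisible} is legitimately applicable. This is routine but should be checked separately for $p \equiv 1,3,5,7 \pmod 8$ (the boundary value $k = e$ occurs already at $p = 7$), and it is the one place where a miscomputed index would invalidate the conclusion.
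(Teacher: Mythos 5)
Your proposal is correct and follows essentially the same route as the paper's proof: specializing Corollary \ref{divisible} to $r=4$ to get $\beta_{5e-4k} \equiv \lambda^k\beta_{5e+4k} \pmod{p}$, pairing the four possibly nonzero entries symmetrically about $5e$ (the paper writes out $p \equiv 1 \pmod 8$, where $\beta_{p-1} \equiv \beta_{4p-4}\lambda^{3e/4}$ and $\beta_{2p-2} \equiv \beta_{3p-3}\lambda^{e/4}$, and treats the other residues ``similarly''), invoking $\lambda \neq 0$, and concluding via Theorem \ref{thm:Nygaard}. Your uniform bookkeeping identity $\sigma(i)+\sigma(5-i)=5$ and the check that $0 \le k \le e$ (with $k=e$ attained at $p=7$) are exactly the routine verifications the paper leaves implicit.
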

\begin{proof}
It follows from Corollary \ref{divisible} that
\[
    \beta_{5e-4i} \equiv \beta_{5e+4i}\lambda^i \hspace{-0.3mm}\pmod{p}
\]
for all $i \in \{0,\ldots,e\}$. In the case where $p \equiv 1 \pmod{8}$, we see that
\[
    \beta_{p-1} \equiv \beta_{4p-4}\lambda^{3e/4}\ \ {\rm and}\ \ \beta_{2p-2} \equiv \beta_{3p-3}\lambda^{e/4}.
\]
By the condition that $\lambda \neq 0$, we have
\[
    \beta_{p-1} = \beta_{2p-2} = \beta_{3p-3} = \beta_{4p-4} = 0 \ \Longleftrightarrow\,\beta_{3p-3} = \beta_{4p-4} = 0.
\]
The other three cases can be shown in a way similar to this.
\end{proof}

Putting $F := {\rm gcd}(\beta_{3p-i},\beta_{4p-j})$ for
\begin{equation}\label{D8ij}
    (i,j) := \left\{
    \begin{array}{l}
        (3,4) \quad {\rm if}\ p \equiv 1 \hspace{-2.5mm}\pmod{8},\\[0.5mm]
        (4,3) \quad {\rm if}\ p \equiv 3 \hspace{-2.5mm}\pmod{8},\\[0.5mm]
        (1,2) \quad {\rm if}\ p \equiv 5 \hspace{-2.5mm}\pmod{8},\\[0.5mm]
        (2,1) \quad {\rm if}\ p \equiv 7 \hspace{-2.5mm}\pmod{8},
    \end{array}
    \right.
\end{equation}
we have that $H_\lambda$ is superspecial if and only if $F(\lambda) = 0$.

Here, we have shown that a list of all superspecial $H_{\lambda}$ can be generated by computing the gcd $F(\lambda)$ and by seeking its roots, all of which are contained in $\mathbb{F}_{p^4}\hspace{-0.5mm}$ (see Proposition \ref{D8belong} below).
Note that the number of listed superspecial curves $H_{\lambda}$ is $O(p)$ since $\deg{F} = O(p)$.
In fact, we can expect from our computational results that this degree (and thus the number of listed curves) is bounded by a constant not depending on $p$, see Table \ref{table:new} below.

Once the list is obtained, we execute the isomorphism classification to count the number of isomorphism classes of listed superspecial curves.
As we described in Subsection \ref{subsec:V4}, testing isomorphy of two hyperelliptic curves of genus $4$ is done in a constant number of operations (with respect to $p$) in their common base field.
Hence, the complexity of the isomorphism classification is $O(p^2)$ operations in $\mathbb{F}_{p^4}$ naively.
To make this more efficient, we prove the following criteria (Proposition \ref{D8iso} below), and then obtain a formula (Theorem \ref{D8thm} below) on the number of isomorphism classes:

\begin{Prop}\label{D8iso}
For $\lambda_1,\lambda_2 \in k \!\smallsetminus\! \{ 0,1 \}$, the curve $H_{\lambda_1}\hspace{-0.3mm}$ is isomorphic to the curve $H_{\lambda_2}\hspace{-0.3mm}$ if and only if $\lambda_1 = \lambda_2$ of $\lambda_1 = 1/\lambda_2$.
\end{Prop}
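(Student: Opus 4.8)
The plan is to reformulate the isomorphism problem in terms of Möbius transformations acting on branch loci. Since $H_\lambda : y^2 = x(x^4-1)(x^4-\lambda)$ is hyperelliptic of odd degree $9$, its set of $2g+2 = 10$ branch points is
\[
    B_\lambda = \{0,\infty\} \cup \mu_4 \cup \lambda^{1/4}\mu_4,
\]
where $\mu_4 = \{1,i,-1,-i\}$ (with $i^2=-1$) denotes the fourth roots of unity and $\lambda^{1/4}\mu_4$ the four fourth roots of $\lambda$. An isomorphism $H_{\lambda_1}\cong H_{\lambda_2}$ exists if and only if there is some $\phi \in \mathrm{PGL}_2(k)$ with $\phi(B_{\lambda_1}) = B_{\lambda_2}$. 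For the easy (``if'') direction, the identity handles $\lambda_1 = \lambda_2$, while for $\lambda_1 = 1/\lambda_2$ the involution $x \mapsto 1/x$ fixes $\mu_4$ setwise (as $\mu_4^{-1}=\mu_4$) and sends $\lambda_2^{1/4}\mu_4$ to $\lambda_2^{-1/4}\mu_4 = \lambda_1^{1/4}\mu_4$, so $\phi(B_{\lambda_2}) = B_{\lambda_1}$; substituting $(x,y)\mapsto(1/x,\,y/x^5)$ into the equation realizes $H_{\lambda_2}\cong H_{\lambda_1}$ after rescaling $y$.

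For the converse, I would first constrain the shape of $\phi$. The map $\zeta : x \mapsto i x$ lies in $\overline{\mathrm{Aut}}(H_\lambda)$ for every $\lambda$ (since $f(ix)=if(x)$), and generates a cyclic subgroup $C$ of order $4$ whose two fixed points on $\mathbb{P}^1$ are exactly $0$ and $\infty$. Because an isomorphism of curves conjugates reduced automorphism groups, $\phi$ conjugates $\overline{\mathrm{Aut}}(H_{\lambda_1})$ to $\overline{\mathrm{Aut}}(H_{\lambda_2})$; as each of these groups (either $\mathbf{D}_4$ generically, or the larger group at the exceptional value $\lambda=-1$) contains $C$ as its \emph{unique} cyclic subgroup of order $4$, one gets $\phi C \phi^{-1} = C$. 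Hence $\phi$ permutes the fixed points of $C$, i.e.\ $\phi(\{0,\infty\}) = \{0,\infty\}$, which forces $\phi$ to be of the form $x \mapsto a x$ or $x \mapsto a/x$ for some $a \in k^\times$.

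It then remains a short computation to match the two $\mu_4$-cosets. Writing $\phi(B_{\lambda_1})$ explicitly and equating it with $B_{\lambda_2}$, the pair of cosets $\{a\mu_4,\ a\lambda_1^{\pm 1/4}\mu_4\}$ (with the exponent $+$ if $\phi$ fixes $0,\infty$ and $-$ if it swaps them) must equal $\{\mu_4,\ \lambda_2^{1/4}\mu_4\}$. Using $a\mu_4 = \mu_4 \iff a^4 = 1$ and $a\mu_4 = \lambda_2^{1/4}\mu_4 \iff a^4 = \lambda_2$, each of the two possible matchings yields either $\lambda_2 = \lambda_1$ or $\lambda_1\lambda_2 = 1$, completing the proof. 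The only delicate point is the reduction step: one must justify that $\phi$ normalizes $C$. The cleanest route is the characteristicness of the unique order-$4$ cyclic subgroup of the reduced automorphism group; equivalently, one can argue directly that $0$ and $\infty$ are the only branch points fixed by $C$ (the remaining eight splitting into the two free $C$-orbits $\mu_4$ and $\lambda^{1/4}\mu_4$), so that preservation of $\{0,\infty\}$ follows once $\phi$ is known to respect the $C$-action up to conjugacy.
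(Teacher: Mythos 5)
Your proof is correct, but it takes a genuinely different route from the paper. The paper's proof is essentially a reduction to an invariant computation: it rewrites $H_{\lambda_j}$ in the symmetric form $y^2 = x(x^4-\alpha^4)(x^4-\alpha^{-4})$ with $\alpha^8 = \lambda_j$ and asserts, ``by a way similar to the proof of [Ishii, Theorem 2.1],'' that $H_{\lambda_1} \cong H_{\lambda_2}$ holds if and only if $\lambda_1 + 1/\lambda_1 = \lambda_2 + 1/\lambda_2$, from which the dichotomy $\lambda_1 = \lambda_2$ or $\lambda_1 = 1/\lambda_2$ is immediate; the real work is thus outsourced to an adaptation of a genus-$2$ argument of Ishii. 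You instead give a self-contained argument on the branch locus $B_\lambda = \{0,\infty\} \cup \mu_4 \cup \lambda^{1/4}\mu_4$: you pin down the Möbius transformation $\phi$ by observing that $x \mapsto ix$ generates the \emph{unique} cyclic subgroup $C$ of order $4$ in $\overline{\mathrm{Aut}}(H_\lambda)$ (which is $\mathbf{D}_4$ or $\mathbf{D}_8$ by the classification in Theorem \ref{thm:app}), so that $\phi$ normalizes $C$, preserves its fixed-point set $\{0,\infty\}$, and hence has the form $x \mapsto ax$ or $x \mapsto a/x$, after which matching the two $\mu_4$-cosets gives exactly $\lambda_1 = \lambda_2$ or $\lambda_1\lambda_2 = 1$. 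Your case analysis is exhaustive (the cosets $\mu_4$ and $\lambda^{1/4}\mu_4$ are distinct since $\lambda \neq 1$, and the coset $\lambda^{1/4}\mu_4$ is independent of the chosen root), the normalizer step is sound for both the generic and exceptional values of $\lambda$ since uniqueness of the $C_4$ holds in both $\mathbf{D}_4$ and $\mathbf{D}_8$ and conjugation carries the unique $C_4$ of one group to that of the other, and there is no circularity with Corollary \ref{D8special} because you never need to know \emph{which} $\lambda$ is exceptional. What each approach buys: the paper's version is shorter and produces the invariant $\lambda + 1/\lambda$ explicitly, which the paper reuses in the proof of Proposition \ref{D8belong}; yours is fully self-contained and makes the mechanism transparent, at the cost of invoking the classification theorem to control $\overline{\mathrm{Aut}}(H_\lambda)$ (your sketched alternative via the $C$-orbit structure of the branch points would remove even that dependence, though as you note it still requires justifying that $\phi$ conjugates $C$ to itself).
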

\begin{proof}
Write the two hyperelliptic curves $H_{\lambda_1}$ and $H_{\lambda_2}$ as follows:
\begin{align*}
    H_{\lambda_1} &: y^2 = x(x^4-\alpha^4)(x^4-\alpha^{-4}), \quad {\rm with}\ \alpha^8 = \lambda_1,\\
    H_{\lambda_2} &: y^2 = x(x^4-\beta^4)(x^4-\beta^{-4}), \quad \hspace{0.1mm}{\rm with}\ \beta^8 = \lambda_2.
\end{align*}
Then, one can show that $H_{\lambda_1} \cong H_{\lambda_2}$ if and only if
\[
    \lambda_1 + \frac{1}{\lambda_1} = \alpha^8 + \frac{1}{\alpha^8} = \beta^8 + \frac{1}{\beta^8} = \lambda_2 + \frac{1}{\lambda_2}
\]
by a way similar to the proof of \cite[Theorem 2.1]{Ishii}.
It is straightforward that this is equivalent to $\lambda_1 = \lambda_2$ or $\lambda_1 = 1/\lambda_2$, as desired.
\end{proof}

Recall from Theorem \ref{thm:app} (especially Figure \ref{fig:aut}) that $\mathrm{Aut}(H_{\lambda})$ is isomorphic to $\mathbf{D}_{8}$ or $\mathbf{C}_{16} \rtimes \mathbf{C}_2$; the latter case is easily detected as follows:

\begin{Cor}\label{D8special}
The following statements are true:
\begin{enumerate}
    \item If $\lambda = -1$, then we have ${\rm Aut}(H_\lambda) \cong \mathbf{C}_{16} \rtimes \mathbf{C}_2$.\vspace{-0.5mm}
    \item Otherwise, we have ${\rm Aut}(H_\lambda) \cong \mathbf{D}_{8}$.
\end{enumerate}
\end{Cor}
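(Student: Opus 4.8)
The plan is to combine the classification in Theorem \ref{thm:app} with the explicit parametrization recorded in Remark \ref{Alambda} and the isomorphy criterion of Proposition \ref{D8iso}. By Lemma \ref{lem:D8form} the curve $H_\lambda$ satisfies $\mathrm{Aut}(H_\lambda) \supset \mathbf{D}_8$, and Figure \ref{fig:aut} shows that the only automorphism group of a genus-$4$ hyperelliptic curve that properly contains $\mathbf{D}_8$ is $\mathbf{C}_{16} \rtimes \mathbf{C}_2$ (type \textbf{8}). Hence the statement reduces to deciding, for each $\lambda$, whether $\mathrm{Aut}(H_\lambda)$ equals $\mathbf{D}_8$ or $\mathbf{C}_{16} \rtimes \mathbf{C}_2$, and the latter happens precisely when $H_\lambda$ is isomorphic to the type-\textbf{8} normal form $y^2 = x^9 + x$.

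First I would translate the condition ``$H_\lambda$ is of type \textbf{8}'' into the language of the defining parameter $A$. Writing $H_\lambda$ in the type-\textbf{5} normal form $y^2 = x^9 + Ax^5 + x$ as in Remark \ref{Alambda}, the type-\textbf{8} curve $y^2 = x^9 + x$ is exactly the subcase $A = 0$. Next I would use the relation $\lambda + 1/\lambda = A^2 - 2$ from Remark \ref{Alambda} to convert $A = 0$ into a condition on $\lambda$: the equation $\lambda + 1/\lambda = -2$ is equivalent to $(\lambda+1)^2 = 0$, i.e.\ $\lambda = -1$. This yields the implication that $\lambda = -1$ forces $\mathrm{Aut}(H_\lambda) \cong \mathbf{C}_{16} \rtimes \mathbf{C}_2$.

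For the converse I would invoke Proposition \ref{D8iso}: since $H_\lambda \cong H_{-1}$ forces $\lambda = -1$ or $\lambda = 1/(-1) = -1$, no value $\lambda \neq -1$ can yield a curve isomorphic to the type-\textbf{8} curve, so $\mathrm{Aut}(H_\lambda) \cong \mathbf{D}_8$ for all $\lambda \neq -1$. Combining the two directions gives statements (1) and (2).

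The main obstacle I anticipate is justifying that $A = 0$ is the \emph{unique} way (up to isomorphism) in which $H_\lambda$ acquires the larger automorphism group, in particular ruling out that some $A \neq 0$ also produces a type-\textbf{8} curve. This is exactly where Proposition \ref{D8iso} does the essential work: it pins down the isomorphism class of $H_{-1}$ to the single parameter value $\lambda = -1$, so that the correspondence between $A = 0$ and $\lambda = -1$ is a genuine equivalence rather than a one-way implication. One subtlety worth checking carefully is that the rescaling $x \mapsto \alpha x$ underlying Remark \ref{Alambda} is compatible with the passage between the two normal forms, and that $A$ is determined only up to sign (consistent with the $\lambda \leftrightarrow 1/\lambda$ symmetry in Proposition \ref{D8iso}); neither point affects the vanishing locus $A = 0$, so the argument goes through.
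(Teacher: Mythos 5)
Your proposal is correct and follows essentially the same route as the paper: assertion (1) via Remark \ref{Alambda} together with the type-\textbf{8} entry of Table \ref{table:aut} (the relation $\lambda + 1/\lambda = A^2 - 2$ with $A=0$ forcing $\lambda = -1$), and assertion (2) via Proposition \ref{D8iso} showing $H_\lambda \not\cong H_{-1}$ for $\lambda \neq -1$. You merely spell out more explicitly the dichotomy from Theorem \ref{thm:app} that the paper takes as given in the sentence preceding the corollary.
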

\begin{proof}
The assertion (1) follows from Remark \ref{Alambda} and the case {\bf 8} of Table \ref{table:aut}.
For any $\lambda \neq -1$, it follows from Proposition \ref{D8iso} that $H_{\lambda}$ is not isomorphic to $H_{-1}$, as desired.
\end{proof}

Thanks to the following theorem, we can compute the number of isomorphism classes of superspecial hyperelliptic genus-4 curves such that ${\rm Aut} \hspace{-0.1mm}\cong {\mathbf D}_8$. 
\begin{Thm}\label{D8thm}
With notation as above, the degree of $F$ is odd if and only if $p \equiv -1,9 \pmod{16}$, and the number of superspecial hyperelliptic curves $H$ of genus 4 such that ${\rm Aut}(H) \cong {\mathbf D}_8$ is given as\vspace{-1mm}
\[
    \left\{
    \begin{array}{l}
        \frac{1}{2}(\deg{F}-1) \quad {\rm if}\ p \equiv -1,9 \pmod{16},\\[1mm]
        \frac{1}{2}\deg{F} \hspace{4.7mm}\qquad {\rm otherwise.}
    \end{array}
    \right.
\]
\end{Thm}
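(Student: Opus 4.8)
```latex
The plan is to establish the theorem in two parts: first the parity characterization of $\deg F$, and then the counting formula, which follows once the parity is understood together with the isomorphy criterion of Proposition \ref{D8iso} and the special-curve detection of Corollary \ref{D8special}. The key observation is that the roots of $F$ are exactly the values $\lambda \neq 0,1$ for which $H_\lambda$ is superspecial, and by Proposition \ref{D8iso} these roots come in pairs $\{\lambda, 1/\lambda\}$ except when $\lambda = 1/\lambda$, i.e.\ $\lambda = -1$ (the value $\lambda = 1$ being excluded). So the self-paired root, if present, is precisely $\lambda = -1$, which by Corollary \ref{D8special} is the unique $\lambda$ giving $\mathrm{Aut}(H_\lambda) \cong \mathbf{C}_{16} \rtimes \mathbf{C}_2$ rather than $\mathbf{D}_8$.

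First I would verify that $F$ has no repeated roots, so that counting roots is the same as counting distinct superspecial $\lambda$'s; this should follow from the fact that superspecial $H_\lambda$ form a finite reduced set and that $F$ is (a scalar multiple of) the squarefree polynomial cutting them out — or more directly from examining the gcd structure of the two hypergeometric truncations. Granting this, $\deg F$ equals the number of superspecial values of $\lambda$. Then I would argue that $\lambda = -1$ is a root of $F$ if and only if $H_{-1}$ is superspecial, and by Corollary \ref{spssp}(1), $H_{-1}$ (whose automorphism group is $\mathbf{C}_{16} \rtimes \mathbf{C}_2$) is superspecial if and only if $p \equiv -1, 9 \pmod{16}$. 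Since $-1$ is the only self-paired root, the total number of roots $\deg F$ is odd precisely when $-1$ is among them, i.e.\ precisely when $p \equiv -1, 9 \pmod{16}$. This establishes the parity claim.

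For the counting formula, the remaining roots $\lambda \neq -1$ pair up as $\{\lambda, 1/\lambda\}$ under $H_\lambda \cong H_{1/\lambda}$, and each such pair gives a single isomorphism class with $\mathrm{Aut} \cong \mathbf{D}_8$ by Corollary \ref{D8special}(2). Hence the number of $\mathbf{D}_8$-classes equals half the number of roots other than $-1$. When $p \equiv -1, 9 \pmod{16}$, one root ($\lambda = -1$) is excluded as it yields the larger group $\mathbf{C}_{16} \rtimes \mathbf{C}_2$, leaving $\deg F - 1$ roots that split into $\tfrac12(\deg F - 1)$ pairs; otherwise all $\deg F$ roots have $\mathrm{Aut} \cong \mathbf{D}_8$ and pair into $\tfrac12 \deg F$ classes. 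This matches the stated formula.

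I expect the main obstacle to be the squarefreeness of $F$: the clean correspondence ``roots $\leftrightarrow$ isomorphism classes'' hinges on $F$ having no multiple roots, and one must rule out the possibility that a superspecial $\lambda$ is a higher-order zero of the relevant hypergeometric truncation. A secondary subtlety is confirming that the involution $\lambda \mapsto 1/\lambda$ genuinely acts on the root set of $F$ (i.e.\ that $F(\lambda) = 0 \Leftrightarrow F(1/\lambda) = 0$) rather than merely on superspeciality; this should follow from Proposition \ref{D8iso} combined with the well-definedness of superspeciality under isomorphism, but it deserves an explicit line. The pairing and parity bookkeeping are then routine.
```
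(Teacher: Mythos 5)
Your skeleton matches the paper's proof exactly: the roots of $F$ pair up under $\lambda \mapsto 1/\lambda$ via Proposition \ref{D8iso}, the only possible self-paired root is $\lambda = -1$, the condition $F(-1)=0$ holds precisely when $p \equiv -1,9 \pmod{16}$ by Corollary \ref{spssp}(1) together with Corollary \ref{D8special}, and the parity and counting bookkeeping then go through as you describe.

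The genuine gap is the one you yourself flagged: separability of $F$. Your first proposed resolution --- that superspecial $H_\lambda$ form a ``finite reduced set'' and $F$ is a scalar multiple of the squarefree polynomial cutting them out --- is circular: $F$ is \emph{defined} as the gcd of two explicit Cartier--Manin entries, not as the radical of the ideal of the superspecial locus, so its squarefreeness is exactly what must be proved, and finiteness of the root set gives no control on multiplicities. The paper closes this by an Igusa-type argument: each entry $\beta_i$, viewed as a univariate polynomial in $\lambda$ via the hypergeometric expression of Proposition \ref{alpha}, is itself separable and has no root at $\lambda = 0$ or $\lambda = 1$ (by the method of \cite{Igusa}, \cite[\S 1.4]{IKO}, or \cite[Lemma 3.1.3]{C6}); since $F$ divides such a polynomial, $F$ is separable and avoids $0,1$ as well. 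Note that this ``no root at $0$ or $1$'' statement is silently assumed in your write-up but is genuinely needed: the equivalence ``$H_\lambda$ superspecial $\Longleftrightarrow F(\lambda)=0$'' only makes sense for $\lambda \neq 0,1$, so without excluding roots there, $\deg F$ could be inflated by spurious zeros and both the parity claim and the counting formula would fail. Your alternative suggestion (examining the hypergeometric truncations directly) points at the right mechanism, but carrying it out is precisely the missing content.
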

\begin{proof}
{We first claim that $F$ is separable as a univariate polynomial in $\lambda$.}
{Indeed, each entry $\beta_k$ in the Cartier-Manin matrix for $H_{\lambda}$ is written as a Gauss' hypergeometric series by Corollary \ref{alpha}.
Then, one can show that all $\beta_i$'s (as univariate polynomials in $\lambda$) are separable and have no root $\lambda=0$ nor $\lambda =1$, by a way similar to the proof of \cite{Igusa}, \cite[$\S 1.4$]{IKO}, or \cite[Lemma 3.1.3]{C6}.}
{Therefore, $F$ is also separable.}

{Next, we prove the assertions.
For any $\lambda_0 \in k \!\smallsetminus \!\{ 0,1 \}$, it follows from Proposition \ref{D8iso} that $F(\lambda_0) = 0$ is equivalent to $F(\frac{1}{\lambda_0}) = 0$.
Here, $\lambda_0 = \frac{1}{\lambda_0}$ implies $\lambda_0 = \pm 1$ and vice versa, and $F(-1) = 0$ holds if and only if $p \equiv -1,9 \pmod{16}$ by Corollary \ref{spssp} (1) and Corollary \ref{D8special}.
From these together with the separability of $F$, we can write
\[
    F = 
    \left\{
    \begin{array}{ll}
        c(\lambda+1)^i \prod_{\lambda_0} (\lambda-\lambda_0)(\lambda-\frac{1}{\lambda_0}) & \quad {\rm if}\ p \equiv -1,9 \pmod{16},\\[0.5mm]
        c \prod_{\lambda_0} (\lambda-\lambda_0)(\lambda-\frac{1}{\lambda_0}) & \quad \mbox{otherwise}
    \end{array}
    \right.
\]
for a constant $c \in k \smallsetminus \{ 0 \}$ and some mutually different $\lambda_0$'s with $\lambda_0 \in k \smallsetminus \{ 0,\pm 1 \}$, which implies the assertion on the degree of $F$.}
{With the above factorization of $F$, we obtain the second 
 assertion from Proposition \ref{D8iso} and Corollary \ref{D8special}.}
\end{proof}

Moreover, we obtain the following proposition, which is used for determining the complexity of computing the list of superspecial curves $H_{\lambda}$ with exact $\lambda$:

\begin{Prop}\label{D8belong}
All roots of the polynomial  $F$ belong to $\mathbb{F}_{p^4}$.
\end{Prop}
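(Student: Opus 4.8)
The plan is to show that every root $\lambda_0$ of $F$ satisfies $\lambda_0^{p^4} = \lambda_0$. Recall that, by construction, $F(\lambda_0) = 0$ holds precisely when $H_{\lambda_0}$ is superspecial, and that $F \in \mathbb{F}_p[\lambda]$ because each $\beta_i$ is a polynomial in $\lambda$ with coefficients in the prime field. Hence all roots of $F$ already lie in $\overline{\mathbb{F}_p}$, and it remains only to confine them to $\mathbb{F}_{p^4}$.

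The key step I would establish is: if $H_{\lambda_0}$ is superspecial, then $H_{\lambda_0}$ is isomorphic to its $p^2$-th Frobenius twist $H_{\lambda_0}^{(p^2)}$. Since the coefficients of the defining equation $y^2 = x(x^4-1)(x^4-\lambda_0)$ are polynomials in $\lambda_0$ over $\mathbb{F}_p$, this twist is given explicitly by $H_{\lambda_0}^{(p^2)} = H_{\lambda_0^{p^2}}$. To prove $H_{\lambda_0} \cong H_{\lambda_0^{p^2}}$, I would pass to Jacobians: the Jacobian of a superspecial curve is a superspecial principally polarized abelian variety, and such an object is defined over $\mathbb{F}_{p^2}$ (for the underlying variety this already follows from $\mathrm{Jac}(H_{\lambda_0}) \cong E^4$ with $E$ supersingular and $j(E) \in \mathbb{F}_{p^2}$). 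Because the $p^2$-power map is the identity on $\mathbb{F}_{p^2}$, any object defined over $\mathbb{F}_{p^2}$ is canonically isomorphic to its $p^2$-twist; applying this to the canonically polarized $(\mathrm{Jac}(H_{\lambda_0}),\theta)$ and invoking the Torelli theorem (which is defined over the prime field and recovers a curve from its polarized Jacobian) yields $H_{\lambda_0} \cong H_{\lambda_0}^{(p^2)} = H_{\lambda_0^{p^2}}$.

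With this isomorphism in hand, Proposition \ref{D8iso} forces $\lambda_0^{p^2} = \lambda_0$ or $\lambda_0^{p^2} = 1/\lambda_0$. In the first case $\lambda_0 \in \mathbb{F}_{p^2} \subseteq \mathbb{F}_{p^4}$. In the second case, raising $\lambda_0^{p^2} = 1/\lambda_0$ to the $p^2$-th power gives $\lambda_0^{p^4} = 1/\lambda_0^{p^2} = \lambda_0$, so again $\lambda_0 \in \mathbb{F}_{p^4}$. Either way $\lambda_0 \in \mathbb{F}_{p^4}$, proving the claim.

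The main obstacle is the middle step, namely a clean justification that a superspecial curve is isomorphic to its $p^2$-Frobenius twist; the content is the descent of superspecial principally polarized abelian varieties to $\mathbb{F}_{p^2}$ together with Torelli. I would either cite the structure theory of superspecial abelian varieties for the descent, or, to keep the argument self-contained for this one-parameter family, deduce $H_{\lambda_0^{p^2}} \cong H_{\lambda_0}$ directly from the field-of-moduli statement already used for genus-$2$ Rosenhain invariants in \cite{Ohashi}, transported through the $\mathbf{D}_8$-equivariant decomposition of $\mathrm{Jac}(H_{\lambda_0})$ into Jacobians of its quotient curves. The only minor subtlety to check is that the Torelli step correctly handles the principal polarization together with the hyperelliptic sign ambiguity, but this does not affect the conclusion, since it concerns isomorphisms of curves, which is exactly what Proposition \ref{D8iso} is phrased in terms of.
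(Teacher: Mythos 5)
Your proof is correct and follows essentially the same route as the paper: both establish $H_{\lambda_0} \cong H_{\lambda_0^{p^2}}$ from the fact that superspecial curves are defined over $\mathbb{F}_{p^2}$ (the paper cites this curve-level fact directly, while you unpack its standard proof via descent of the superspecial principally polarized Jacobian to $\mathbb{F}_{p^2}$ plus Torelli) and then conclude with Proposition~\ref{D8iso}. The only cosmetic difference is the endgame: you case-split on $\lambda_0^{p^2} = \lambda_0$ versus $\lambda_0^{p^2} = 1/\lambda_0$ to get $\lambda_0^{p^4} = \lambda_0$, whereas the paper observes $\lambda_0 + 1/\lambda_0 \in \mathbb{F}_{p^2}$ so that $\lambda_0$ satisfies a quadratic over $\mathbb{F}_{p^2}$ --- two equivalent formulations of the same conclusion.
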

\begin{proof}
It is known that any superspecial curve is defined over $\mathbb{F}_{p^2}$.
Hence, if $H_\lambda$ is superspecial, then there exists a curve $H$ defined over $\mathbb{F}_{p^2}$ such that $H_\lambda \cong H$.
Let ${H_\lambda}^{\!(\sigma)}\hspace{-0.2mm}$ be the fiber product $H_\lambda \otimes_{k,\sigma} k$, where $\sigma: k \rightarrow k$ is the $p^2$-power map, so that we have ${H_\lambda}^{\!(\sigma)} \cong H$. Since $H_{\sigma(\lambda)} \cong {H_\lambda}^{\!(\sigma)}$, we obtain that ${H_\lambda} \cong H_{\sigma(\lambda)}$. It follows from Proposition \ref{D8iso} that
\[
    \lambda +  \frac{1}{\lambda} = \sigma(\lambda) +  \frac{1}{\sigma(\lambda)} = \sigma\biggl(\lambda + \frac{1}{\lambda}\biggr),
\]
and hence $\lambda + 1/\lambda$ belongs to $\mathbb{F}_{p^2}$.
{This implies that $\lambda$ and $1/\lambda$ are the roots of a quadratic polynomial over $\mathbb{F}_{p^2}$, so that $\lambda \in \mathbb{F}_{p^4}$, as desired.}
\end{proof}

Putting the results obtained in this subsection together,
we obtain the following algorithm:
\begin{Alg}\label{alg:D8}
    ~
\begin{description}
\item[{\it Input:}] A rational prime $p\geq 7$.
\item[{\it Output:}] A list $\mathcal{L}$ of representatives for the isomorphism classes of superspecial genus-$4$ hyperelliptic curves $H$ over $\overline{\mathbb{F}_p}$ with $\mathrm{Aut}(H) \cong \mathbf{D}_8$.
\end{description}
\begin{description}
\item[Step 1.] Compute $\beta_{3p-i}$ and $\beta_{4p-j}$ by using Proposition \ref{alpha} {(details will be explained in the proof of Theorem \ref{D8comp} below)}, where $(i,j)$ denotes a pair of integers defined in (\ref{D8ij}).
\item[Step 2.] {Compute the polynomial $F := {\rm gcd}(\beta_{3p-i},\beta_{4p-j})$ and its roots $\lambda \in \mathbb{F}_{p^4}$.}
\item[Step 3.] For each $\lambda \neq -1$ obtained in Step 2, if not $H_{1/\lambda}$ already belongs to $\mathcal{L}$, then set $\mathcal{L} \leftarrow \mathcal{L} \hspace{0.5mm}\cup \{H_\lambda\}$. Finally, output a list $\mathcal{L}$.
\end{description}
\end{Alg}

We finally determine the complexity of Algorithm \ref{alg:D8} in the following:

\begin{Thm}\label{D8comp}
We can compute the number of superspecial genus-$4$ hyperelliptic curves $H$ with $\mathrm{Aut}(H) \cong \mathbf{D}_8$ in $\tilde{O}(p)$ operations in $\mathbb{F}_{p}$.
{
Moreover, the complexity of listing up all of their isomorphism classes is bounded by $\tilde{O}(p+d^2)$ operations in $\mathbb{F}_{p^4}$ with $d := \deg{F}=O(p)$, and it becomes $\tilde{O}(p)$ operations in $\mathbb{F}_{p^4}$ assuming that $\deg{F}$ is bounded by a constant not depending on $p$.}
\end{Thm}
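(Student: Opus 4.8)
The plan is to analyze Algorithm \ref{alg:D8} step by step, keeping the two assertions separate: computing the \emph{number} of classes requires only $\deg F$, which can be obtained purely over $\mathbb{F}_p$, whereas \emph{listing} the classes requires the actual roots of $F$, which live in $\mathbb{F}_{p^4}$ by Proposition \ref{D8belong}.

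For the first assertion, I would first bound Step 1. By Proposition \ref{alpha}, each of $\beta_{3p-i}$ and $\beta_{4p-j}$ equals, up to an explicit constant and a power of $\lambda$, a truncation $G^{(k)}(\cdots;\lambda)$ with $k = O(p)$, hence a univariate polynomial in $\lambda$ of degree $O(p)$ whose coefficients lie in $\mathbb{F}_p$. Since the ratio of two consecutive coefficients of $G^{(k)}$ is a rational function of the summation index, all $O(p)$ coefficients can be generated by an incremental recurrence using $O(1)$ field operations each, so Step 1 costs $\tilde{O}(p)$ operations in $\mathbb{F}_p$. In Step 2, the gcd $F=\gcd(\beta_{3p-i},\beta_{4p-j})$ of two degree-$O(p)$ polynomials over $\mathbb{F}_p$ is computed by the fast (half-gcd) Euclidean algorithm in $\tilde{O}(p)$ operations in $\mathbb{F}_p$, which in particular yields $\deg F$; feeding $\deg F$ and the residue $p \bmod 16$ into the formula of Theorem \ref{D8thm} produces the count in $O(1)$ further operations. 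This settles the first claim.

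For the listing claim I would then work over $\mathbb{F}_{p^4}$. By Theorem \ref{D8thm} the polynomial $F$ is separable, and by Proposition \ref{D8belong} all of its roots lie in $\mathbb{F}_{p^4}$, so $F$ splits there into $d = \deg F = O(p)$ distinct linear factors; an equal-degree-factorization root finder extracts them in $\tilde{O}(d)$ operations in $\mathbb{F}_{p^4}$, the modular exponentiation $x^{p^4} \bmod F$ contributing only factors logarithmic in $p$. Step 3 then sorts the roots into isomorphism classes via Proposition \ref{D8iso}, pairing each $\lambda \neq -1$ with $1/\lambda$ and keeping one representative; carried out by naive pairwise comparison this costs $O(d^2)$ operations in $\mathbb{F}_{p^4}$. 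As $d = O(p)$, the root finding is absorbed into the $\tilde{O}(p)$ term, and summing gives $\tilde{O}(p + d^2)$ operations in $\mathbb{F}_{p^4}$; under the assumption that $\deg F$ is bounded independently of $p$, the quadratic term is $O(1)$ and the bound collapses to $\tilde{O}(p)$.

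The hardest part will be making the $\tilde{O}(p)$ bound for Step 1 fully rigorous: one must check that the hypergeometric truncations of Proposition \ref{alpha} really do have $\mathbb{F}_p$-coefficients (so that the counting phase never leaves $\mathbb{F}_p$), and that the incremental coefficient recurrence never divides by a quantity vanishing modulo $p$ in the relevant range of indices, the degenerate cases being controlled by the case split \eqref{D8ij} on $p \bmod 8$. The remaining estimates are routine fast-arithmetic bookkeeping, and the separation of the $\mathbb{F}_p$-cost (counting) from the $\mathbb{F}_{p^4}$-cost (listing) is what makes the two stated bounds genuinely different.
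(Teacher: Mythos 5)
Your proposal follows essentially the same route as the paper's proof: the coefficient recurrence for the hypergeometric truncations gives Step 1 in $\tilde{O}(p)$ operations in $\mathbb{F}_p$, the fast gcd plus Theorem \ref{D8thm} yields the count over $\mathbb{F}_p$, and separability together with Proposition \ref{D8belong} reduces listing to root extraction in $\mathbb{F}_{p^4}$ followed by the $\lambda \leftrightarrow 1/\lambda$ pairing of Proposition \ref{D8iso}. The only (harmless) difference is bookkeeping: you charge the $O(d^2)$ term to naive pairwise comparison in Step 3 with a softly-linear root finder, whereas the paper charges $\tilde{O}(d^2)$ to Cantor--Zassenhaus and only $O(d\log d)$ to Step 3 --- both allocations give the stated $\tilde{O}(p+d^2)$ bound and its collapse to $\tilde{O}(p)$ when $d = O(1)$.
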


\begin{proof}
By definition of $\beta_i$, we see that the degrees of $\beta_{3p-i}$ and $\beta_{4p-j}$ are both $O(p)$.
These are computed in $\tilde{O}(p)$ operations in $\mathbb{F}_p$.
Indeed, we can write
\begin{align*}
    \beta_{3p-i} &= (-1)^{k_1}\binom{e}{p-1-k_1}G^{(k_1)}(1/2,1+k_1,3/2+k_1\,;\lambda), \ \ k_1 = \frac{3p-i-e}{4},\\
    \beta_{4p-j} &= (-1)^{k_2}\binom{e}{p-1-k_2}G^{(k_2)}(1/2,1+k_2,3/2+k_2\,;\lambda), \ \ k_2 = \frac{4p-j-e}{4}
\end{align*}
by Proposition \ref{alpha}.
Here, Gauss' hypergeometric series $G^{(d)}(a,b,c\,;\lambda)$ can be computed as follows:
We set $g_0 := 1$ and {compute}
\[
    g_n :=\frac{(a+n-1)(b+n-1)}{(c+n-1)n} \cdot g_{n-1}
\]
{for each $n$ from $1$ to $d$}.
Then, the sum $g_0 + g_1\lambda + \cdots + g_d\lambda^d$ is equal to $G^{(d)}(a,b,c\,;\lambda)$.
Hence, the computation of Step 1 can be done in {${O}(p)$} operations in $\mathbb{F}_p$.

In Step 2, the polynomial $F$ can be also computed in $\tilde{O}(p)$ operations in $\mathbb{F}_p$ with the fast gcd algorithm, {since the degrees of $\beta_{3p-i}$ and $\beta_{4p-j}$ as univariate polynomials in $\lambda$ are both $O(p)$.
Hence,} the first assertion is shown by Theorem \ref{D8thm}.
If one computes a list of representatives of all isomorphism classes, it is required to compute all the roots in $\mathbb{F}_{p^4}\hspace{-0.3mm}$ of $F$.
By the separability of the polynomial $F$, we can compute the roots only by equal-degree factorization, e.g., the Cantor-Zassenhaus algorithm, whose complexity is ${O}(d^2\log{d}\log{p^4})$ (and thus $\tilde{O}(d^2)$) operations in $\mathbb{F}_{p^4}$ in this case.

It follows from Proposition \ref{D8iso} that computations in Step 3 can be done in $O(d \log{d})$ operations in $\mathbb{F}_{p^4}$.
Considering the complexity of each step together, we obtain the second assertion. {When we suppose $d=O(1)$, it is clear that the complexity of Step 2 (resp.\ Step 3) is $\tilde{O}(p)$ (resp.\ $O(1)$) operations in $\mathbb{F}_{p^4}$, which deduces the last assertion.}
\end{proof}

\subsection{The case where ${\rm Aut} \cong {\mathbf D}_{10}$}\label{subsec:D10}
Similarly to the previous subsection, we shall present an algorithm specific to the case where $\mathrm{Aut}(H) \supset \mathbf{D}_{10}$.
We first prove the following lemma:

\begin{Lem}\label{lem:D10form}
Any genus-4 hyperelliptic curve $H$ such that ${\rm Aut}(H) \supset \mathbf{D}_{10}$ can be written as
\[
    {H'}_{\!\!\lambda}: y^2 = (x^5-1)(x^5-\lambda)
\]
for some $\lambda \neq 0,1$.
\end{Lem}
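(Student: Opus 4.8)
The plan is to start from the normal form for Type \textbf{6} curves given in Table \ref{table:aut}, namely $y^2 = x^{10} + A x^5 + 1$, and transform it into the claimed shape ${H'}_{\!\!\lambda}: y^2 = (x^5-1)(x^5-\lambda)$ by factoring the right-hand side. Viewing the right-hand side as a quadratic in $x^5$, I would write $x^{10} + A x^5 + 1 = (x^5 - \rho)(x^5 - \rho^{-1})$, where $\rho$ and $\rho^{-1}$ are the two roots of $t^2 + A t + 1 = 0$ (their product is $1$, so one is the reciprocal of the other), so that $\rho + \rho^{-1} = -A$. The nondegeneracy conditions on $A$ coming from Table \ref{table:aut} (the polynomial $f(x)$ must be square-free, forcing $\rho \neq \rho^{-1}$ and $\rho \neq 0$) guarantee $\rho \neq 0, \pm 1$.

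Next I would clear the factor $\rho$ by a coordinate scaling to bring the curve into the asserted form. Applying the substitution $(x,y) \mapsto (c x, c' y)$ for a suitable scalar $c$ with $c^5 = \rho$, the factor $(x^5 - \rho)$ becomes proportional to $(x^5 - 1)$ and $(x^5 - \rho^{-1})$ becomes proportional to $(x^5 - \rho^{-2})$, up to an overall constant that can be absorbed into $y$ by choosing $c'$ appropriately. Concretely, this mirrors the proof of Lemma \ref{lem:D8form}: one sets $\lambda := \rho^{-2}$ (or equivalently $\lambda := \rho^2$ after possibly swapping the two factors) and checks that the resulting curve is exactly ${H'}_{\!\!\lambda}$. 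The relation $\rho + \rho^{-1} = -A$ then translates into an explicit relation $\lambda + \lambda^{-1} = A^2 - 2$ between $\lambda$ and $A$, analogous to Remark \ref{Alambda}, which can be recorded for later use.

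Finally I would verify the constraint $\lambda \neq 0, 1$. Since $\rho \neq 0$ we have $\lambda = \rho^{\pm 2} \neq 0$, and since $\rho \neq \pm 1$ we have $\rho^2 \neq 1$, hence $\lambda \neq 1$; this is precisely the condition ensuring $(x^5-1)(x^5-\lambda)$ is square-free so that ${H'}_{\!\!\lambda}$ has genus $4$. I do not expect any serious obstacle here: the argument is a direct factorization-and-rescaling computation completely parallel to Lemma \ref{lem:D8form}, and the only point requiring a little care is bookkeeping the fifth roots of $\rho$ when choosing the scaling constant $c$ (and the matching constant $c'$ on $y$), together with confirming that swapping the two quintic factors corresponds to replacing $\lambda$ by $1/\lambda$, which is harmless since it yields an isomorphic curve. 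This last observation also foreshadows the isomorphy criterion for the ${H'}_{\!\!\lambda}$ family, to be treated separately in the sequel.
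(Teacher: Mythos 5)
Your proposal is correct and takes essentially the same route as the paper's proof: both start from the Type \textbf{6} normal form $y^2 = x^{10} + Ax^5 + 1$ in Table \ref{table:aut}, factor the right-hand side as a quadratic in $x^5$ into $(x^5-\rho)(x^5-\rho^{-1})$ (the paper writes $\rho = \alpha^5$ with $\alpha^5 \neq 0,\pm 1$), and rescale $(x,y)$ by a fifth root of $\rho$ to reach ${H'}_{\!\!\lambda}$, deducing $\lambda \neq 0,1$ from $\rho \neq 0,\pm 1$. The only difference is notational bookkeeping — your $\lambda = \rho^{-2}$ versus the paper's $\lambda = \alpha^{10} = \rho^{2}$ — which, as you correctly observe, amounts to swapping the two quintic factors and replacing $\lambda$ by $1/\lambda$; your recorded relation $\lambda + \lambda^{-1} = A^2 - 2$ likewise matches Remark \ref{Alambda10}.
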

\begin{proof}
From Table \ref{table:aut}, we can write $H$ as
\[
    H: y^2 = x^{10} + Ax^5 + 1 = (x^5-\alpha^5)(x^5-\alpha^{-5})
\]
where $\alpha^5 + \alpha^{-5} = -A$ with $\alpha^5 \neq 0,\pm 1$.
Setting $\lambda := \alpha^{10}$, there exists an isomorphism
\[
    H \rightarrow {H'}_{\!\!\lambda}\ ;\,(x,y) \mapsto (\alpha x,\alpha^5y),
\]
where $\lambda \neq 0, 1$ by $\lambda = \alpha^{10} \neq 0,1$, as desired.
\end{proof}
\begin{Rmk}\label{Alambda10}
    {In the proof of Lemma \ref{lem:D10form}, the genus-$4$ hyperelliptic curve $ H : y^2 = x^{10} + A x^5 + 1$ is isomorphic to $H_{\lambda}$ for some $\lambda \in k \!\smallsetminus\! \{ 0,1 \}$ and vice versa, via the relation $\lambda + \frac{1}{\lambda} = A^2 -2$.}
\end{Rmk}

As for the Cartier-Manin matrix $M$ of ${H'}_{\!\!\lambda}$, one has
\[
    \bigl\{\hspace{0.2mm}(x^5-1)(x^5-\lambda)\bigr\}^e = \sum_{i=0}^{10e} \alpha_ix^i,
\]
where $e := (p-1)/2$.
We see that $\alpha_i = 0$ unless $i$ is multiple of 5.
A straightforward computation determines possibly non-zero entries in $M$, dividing the case into the following four cases, by the value of $p \bmod 10$:
\begin{itemize}
\item If $p \equiv 1 \pmod{10}$, then $e \equiv 0 \pmod{5}$, and we have
\[
    M = \begin{pmatrix}
        \alpha_{p-1} & 0 & 0 & 0\\
        0 & \alpha_{2p-2} & 0 & 0\\
        0 & 0 & \alpha_{3p-3} & 0\\
        0 & 0 & 0 & \alpha_{4p-4}
    \end{pmatrix}.
\]
\item If $p \equiv 3 \pmod{10}$, then $e \equiv 1 \pmod{5}$, and we have
\[
    M = \begin{pmatrix}
        0 & 0 & \alpha_{p-3} & 0\\
        \alpha_{2p-1} & 0 & 0 & 0\\
        0 & 0 & 0 & \alpha_{3p-4}\\
        0 & \alpha_{4p-2} & 0 & 0
    \end{pmatrix}.
\]
\item If $p \equiv 7 \pmod{10}$, then $e \equiv 3 \pmod{5}$, and we have
\[
    M = \begin{pmatrix}
        0 & \alpha_{p-2} & 0 & 0\\
        0 & 0 & 0 & \alpha_{2p-4}\\
        \alpha_{3p-1} & 0 & 0 & 0\\
        0 & 0 & \alpha_{4p-3} & 0
    \end{pmatrix}.
\]
\item If $p \equiv 9 \pmod{10}$, then $e \equiv 4 \pmod{5}$, and we have
\[
    M = \begin{pmatrix}
        0 & 0 & 0 & \alpha_{p-4}\\
        0 & 0 & \alpha_{2p-3} & 0\\
        0 & \alpha_{3p-2} & 0 & 0\\
        \alpha_{4p-1} & 0 & 0 & 0
    \end{pmatrix}.
\]
\end{itemize}

As in Proposition \ref{D8ssp}, we obtain the following proposition:

\begin{Prop}
With notation as above, we have the following:
\begin{itemize}
\item If $p \equiv 1 \pmod{10}$, then ${H'}_{\!\!\lambda}$ is superspecial if and only if $\alpha_{3p-3} \!=\! \alpha_{4p-4} \!=\! 0$.\vspace{-0.5mm}
\item If $p \equiv 3 \pmod{10}$, then ${H'}_{\!\!\lambda}$ is superspecial if and only if $\alpha_{3p-4} \!=\! \alpha_{4p-2} \!=\! 0$.\vspace{-0.5mm}
\item If $p \equiv 7 \pmod{10}$, then ${H'}_{\!\!\lambda}$ is superspecial if and only if $\alpha_{3p-1} \!=\! \alpha_{4p-3} \!=\! 0$.\vspace{-0.5mm} 
\item If $p \equiv 9 \pmod{10}$, then ${H'}_{\!\!\lambda}$ is superspecial if and only if $\alpha_{3p-2} \!=\! \alpha_{4p-1} \!=\! 0$.
\end{itemize}
\end{Prop}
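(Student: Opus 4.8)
The plan is to mirror the proof of Proposition \ref{D8ssp} almost verbatim, replacing the coefficients $\beta$ by $\alpha$ and the modulus $8$ by $10$ (equivalently, taking $r=5$ in Corollary \ref{divisible} instead of $r=4$). The governing identity will be the $\alpha$-part of Corollary \ref{divisible} with $r=5$, namely
\[
    \alpha_{5e-5k} \equiv \alpha_{5e+5k}\,\lambda^{k} \pmod{p} \quad \text{for all } 0 \le k \le e.
\]
This encodes a symmetry of the possibly non-zero coefficients about the central index $5e = \tfrac{5(p-1)}{2}$: each coefficient whose index lies below $5e$ equals a power of $\lambda$ times its mirror coefficient whose index lies above $5e$. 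Since $\lambda \neq 0$, such a relation makes the vanishing of a lower-index entry equivalent to the vanishing of the corresponding upper-index entry, so that only the two entries of largest index are independent.

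First I would fix a residue class of $p$ modulo $10$ and recall the shape of $M$ computed above, whose four possibly non-zero entries sit at the displayed positions $\alpha_{ip-j}$. Because $\alpha_i$ vanishes unless $5 \mid i$, each such index can be rewritten as $5e \mp 5k$ for a suitable nonnegative integer $k$, which separates the four entries into two lying below $5e$ and two lying above. For the cleanest case $p \equiv 1 \pmod{10}$ one has $e \equiv 0 \pmod 5$, and a direct substitution into the identity above gives
\[
    \alpha_{p-1} \equiv \alpha_{4p-4}\,\lambda^{3e/5} \quad \text{and} \quad \alpha_{2p-2} \equiv \alpha_{3p-3}\,\lambda^{e/5} \pmod{p},
\]
where the exponents are integers precisely because $5 \mid e$. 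Using $\lambda \neq 0$, I would conclude that $\alpha_{p-1}=\alpha_{2p-2}=\alpha_{3p-3}=\alpha_{4p-4}=0$ is equivalent to $\alpha_{3p-3}=\alpha_{4p-4}=0$; combined with Theorem \ref{thm:Nygaard}, which says $H'_{\!\!\lambda}$ is superspecial if and only if $M=0$, this yields the first bullet.

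The remaining three classes $p \equiv 3, 7, 9 \pmod{10}$ follow by the identical bookkeeping: in each, the two entries of largest index (those named in the statement) are the independent ones, and the other two are expressed as $\lambda$-multiples of them through Corollary \ref{divisible}, after which $\lambda \neq 0$ and Theorem \ref{thm:Nygaard} finish the argument. I expect the only delicate point to be this index bookkeeping, that is, verifying in each residue class that the four possibly non-zero entries really do pair off symmetrically about $5e$ and that each pairing exponent $k$ is a nonnegative integer (which is exactly where the value of $e \bmod 5$ enters). This is a routine finite check entirely parallel to the $\mathbf{D}_8$ computation, so I would present the $p \equiv 1 \pmod{10}$ case in full and note that the others are verified in the same manner.
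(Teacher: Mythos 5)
Your proposal is correct and follows essentially the same route as the paper: the paper proves this proposition by declaring it "similar to the proof of Proposition \ref{D8ssp}", and that proof is exactly your argument — apply Corollary \ref{divisible} (with $r=5$ here) to pair each lower-index entry of $M$ with its mirror about the central index $5e$, then use $\lambda \neq 0$ and Theorem \ref{thm:Nygaard}. Your index bookkeeping checks out in all four residue classes (e.g., for $p \equiv 3 \pmod{10}$ one gets $\alpha_{p-3} \equiv \alpha_{4p-2}\lambda^{(3e+2)/5}$ and $\alpha_{2p-1} \equiv \alpha_{3p-4}\lambda^{(e-1)/5}$, with integer exponents since $e \equiv 1 \pmod 5$), so the proof is complete.
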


\begin{proof}
This can be shown similarly to the proof of Proposition \ref{D8ssp}, and so let us omit to write a proof here.
\end{proof}

We define $G := {\rm gcd}(\alpha_{3p-i},\alpha_{4p-j})$, where a pair $(i,j)$ is given as
\begin{equation}\label{D10ij}
    (i,j) := \left\{
    \begin{array}{l}
        (3,4) \quad {\rm if}\ p \equiv 1 \hspace{-2.5mm}\pmod{10},\\
        (4,2) \quad {\rm if}\ p \equiv 3 \hspace{-2.5mm}\pmod{10},\\
        (1,3) \quad {\rm if}\ p \equiv 7 \hspace{-2.5mm}\pmod{10},\\
        (2,1) \quad {\rm if}\ p \equiv 9 \hspace{-2.5mm}\pmod{10}.
    \end{array}
    \right.
\end{equation}
Hence, the curve ${H'}_{\!\!\lambda}$ is superspecial if and only if $G(\lambda) = 0$.
As in Proposition \ref{D8iso}, we prove the following criteria to make the isomorphism classification of listed superspecial ${H'}_{\!\!\lambda}$'s efficient:

\begin{Prop}\label{D10iso}
For $\lambda_1,\lambda_2 \in k \smallsetminus \{ 0,1 \}$, the curve ${H'}_{\!\!\lambda_1}\hspace{-0.3mm}$ is isomorphic to the curve ${H'}_{\!\!\lambda_2}\hspace{-0.3mm}$ if and only if $\lambda_1 = \lambda_2$ of $\lambda_1 = 1/\lambda_2$.
\end{Prop}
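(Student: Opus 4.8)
The plan is to follow the proof of Proposition \ref{D8iso} almost verbatim, replacing the order-$4$ structure by the order-$5$ one. First I would write both curves in the symmetric form from Lemma \ref{lem:D10form}, namely
\[
    {H'}_{\!\!\lambda_1}: y^2 = (x^5-\alpha^5)(x^5-\alpha^{-5}), \qquad {H'}_{\!\!\lambda_2}: y^2 = (x^5-\beta^5)(x^5-\beta^{-5}),
\]
with $\alpha^{10} = \lambda_1$ and $\beta^{10} = \lambda_2$. The target is to prove that ${H'}_{\!\!\lambda_1} \cong {H'}_{\!\!\lambda_2}$ if and only if $\lambda_1 + 1/\lambda_1 = \lambda_2 + 1/\lambda_2$ (cf.\ Remark \ref{Alambda10}); an elementary manipulation, factoring $(\lambda_1-\lambda_2)(1 - 1/(\lambda_1\lambda_2))$, shows the latter is equivalent to $\lambda_1 = \lambda_2$ or $\lambda_1\lambda_2 = 1$, which is the desired dichotomy.

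For the ``if'' direction I would simply exhibit the isomorphisms. The case $\lambda_1 = \lambda_2$ is trivial, and for $\lambda_1 = 1/\lambda_2$ the map $(x,y) \mapsto (1/x,\, y/x^5)$ (followed by a rescaling of $y$) carries $y^2 = (x^5-1)(x^5-\lambda)$ to $y^2 = (x^5-1)(x^5-1/\lambda)$, since it sends the right-hand side to $\lambda\,(x^5-1)(x^5-1/\lambda)$.

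For the ``only if'' direction I would use that, for genus $\geq 2$, any isomorphism of hyperelliptic curves descends to a Möbius transformation $\phi$ on the $x$-line carrying the branch locus of ${H'}_{\!\!\lambda_1}$ onto that of ${H'}_{\!\!\lambda_2}$. Here the branch locus is $\{\alpha\zeta^k,\ \alpha^{-1}\zeta^k : 0 \le k \le 4\}$ with $\zeta$ a primitive fifth root of unity, i.e.\ a union of two orbits of the rotation $x \mapsto \zeta x$. The fixed-point pair $\{0,\infty\}$ of this order-$5$ rotation is intrinsically attached to such a configuration, so $\phi$ must fix or swap $\{0,\infty\}$, forcing $\phi(x) = cx$ or $\phi(x) = c/x$ for some $c \in k \smallsetminus \{0\}$. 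Substituting either form and matching the cosets $\alpha\{\zeta^k\}, \alpha^{-1}\{\zeta^k\}$ against $\beta\{\zeta^k\}, \beta^{-1}\{\zeta^k\}$, then multiplying and dividing the two resulting fifth-power relations, yields exactly $\alpha^{10} = \beta^{10}$ or $\alpha^{10}\beta^{10} = 1$, that is $\lambda_1 = \lambda_2$ or $\lambda_1\lambda_2 = 1$.

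I expect the main obstacle to be the rigidity step: rigorously justifying that $\phi$ must fix or interchange $\{0,\infty\}$, equivalently that the only way to overlay the two ``concentric pentagons'' is through a transformation normalizing the order-$5$ rotation $x \mapsto \zeta x$. This is precisely the point established in the computation of \cite[Theorem 2.1]{Ishii}, so I would either invoke that argument as in the proof of Proposition \ref{D8iso} or, if a self-contained treatment is preferred, spell out the short $\mathrm{PGL}_2$ case analysis sketched above. The remaining coset bookkeeping is routine.
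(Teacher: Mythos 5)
Your proposal is correct and follows essentially the same route as the paper, which writes both curves in the symmetric form $y^2=(x^5-\alpha^{5})(x^5-\alpha^{-5})$ and invokes the argument of \cite[Theorem 2.2]{Ishii} to reduce isomorphy to $\lambda_1+1/\lambda_1=\lambda_2+1/\lambda_2$, exactly your criterion. The branch-locus rigidity step you flag (that $\phi$ must normalize the order-$5$ rotation, hence fix or swap $\{0,\infty\}$) is precisely the content the paper delegates to Ishii's computation, so spelling it out merely makes the cited step self-contained.
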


\begin{proof}
Write the two hyperelliptic curves ${H'}_{\!\!\lambda_1}$ and ${H'}_{\!\!\lambda_2}$ as
\begin{align*}
    {H'}_{\!\!\lambda_1} &: y^2 = (x^5-\alpha^5)(x^5-\alpha^{-5}), \quad {\rm with}\ \alpha^{10} = \lambda_1,\\
    {H'}_{\!\!\lambda_2} &: y^2 = (x^5-\beta^5)(x^5-\beta^{-5}), \quad \hspace{0.1mm}{\rm with}\ \beta^{10} = \lambda_2.
\end{align*}
Similarly to the proof of \cite[Theorem 2.2]{Ishii}, one can show that ${H'}_{\!\!\lambda_1} \cong {H'}_{\!\!\lambda_2}$ if and only if
\[
    \lambda_1 + \frac{1}{\lambda_1} = \alpha^{10} + \frac{1}{\alpha^{10}} = \beta^{10} + \frac{1}{\beta^{10}} = \lambda_2 + \frac{1}{\lambda_2},
\]
which is equivalent to $\lambda_1 = \lambda_2$ or $\lambda_1 = 1/\lambda_2$, as desired.
\end{proof}

Recall from Theorem \ref{thm:app} (especially Figure \ref{fig:aut}) that $\mathrm{Aut}(H_{\lambda}')$ is isomorphic to $\mathbf{D}_{10}$ or $\mathbf{C}_{5} \rtimes \mathbf{D}_4$; we can easily decide which is the case as follows:

\begin{Cor}\label{D10special}
The following statements are true:\vspace{-0.5mm}
\begin{enumerate}
    \item If $\lambda = -1$, then we have that ${\rm Aut}({H'}_{\!\!\lambda}) \cong \mathbf{C}_5 \rtimes \mathbf{D}_{4}$.\vspace{-1mm}
    \item Otherwise, we have that ${\rm Aut}({H'}_{\!\!\lambda}) \cong \mathbf{D}_{10}$.
\end{enumerate}
\end{Cor}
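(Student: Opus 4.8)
The plan is to follow the strategy used in the proof of Corollary \ref{D8special}, replacing its inputs by their $\mathbf{D}_{10}$-analogues. Recall from Theorem \ref{thm:app} (see especially Figure \ref{fig:aut}) that a genus-$4$ hyperelliptic curve whose reduced automorphism group is $\mathbf{D}_5$ has full automorphism group isomorphic to either $\mathbf{D}_{10}$ or $\mathbf{C}_5 \rtimes \mathbf{D}_4$, and that the latter arises only from the single parameter-free Type {\bf 10} curve $y^2 = x^{10} + 1$. Hence it suffices to decide, for each $\lambda$, whether ${H'}_{\!\!\lambda}$ is isomorphic to this distinguished curve.

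First I would invoke Remark \ref{Alambda10}, which relates $H : y^2 = x^{10} + A x^5 + 1$ to ${H'}_{\!\!\lambda}$ through $\lambda + \frac{1}{\lambda} = A^2 - 2$. Taking $A = 0$ recovers the Type {\bf 10} curve, and the relation then forces $\lambda + \frac{1}{\lambda} = -2$, whose unique solution is $\lambda = -1$; conversely $\lambda = -1$ gives $A^2 = 0$, i.e.\ $A = 0$. This proves assertion (1): for $\lambda = -1$ the curve ${H'}_{\!\!-1}$ is isomorphic to $y^2 = x^{10} + 1$, so ${\rm Aut}({H'}_{\!\!-1}) \cong \mathbf{C}_5 \rtimes \mathbf{D}_4$.

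For assertion (2) I would argue by elimination. By Proposition \ref{D10iso}, we have ${H'}_{\!\!\lambda} \cong {H'}_{\!\!-1}$ if and only if $\lambda = -1$ or $\lambda = 1/(-1) = -1$, that is, if and only if $\lambda = -1$. Therefore, whenever $\lambda \neq -1$, the curve ${H'}_{\!\!\lambda}$ is not isomorphic to the Type {\bf 10} curve, so its automorphism group cannot be $\mathbf{C}_5 \rtimes \mathbf{D}_4$; by the dichotomy recalled above it must then be $\mathbf{D}_{10}$, as desired.

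Since everything reduces to Remark \ref{Alambda10} and Proposition \ref{D10iso}, the argument is essentially formal and I do not expect any serious obstacle. The only points needing care are that the $\mathbf{C}_5 \rtimes \mathbf{D}_4$ curve is genuinely unique (guaranteed by the parameter-free normal form of Type {\bf 10} in Table \ref{table:aut}) and that the correspondence of Remark \ref{Alambda10} indeed matches the value $\lambda = -1$ with exactly $A = 0$.
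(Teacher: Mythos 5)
Your proposal is correct and matches the paper's own proof essentially step for step: assertion (1) via Remark \ref{Alambda10} together with the Type {\bf 10} entry of Table \ref{table:aut}, and assertion (2) by using Proposition \ref{D10iso} to rule out ${H'}_{\!\!\lambda} \cong {H'}_{\!\!-1}$ for $\lambda \neq -1$ and then invoking the dichotomy $\mathrm{Aut}({H'}_{\!\!\lambda}) \cong \mathbf{D}_{10}$ or $\mathbf{C}_5 \rtimes \mathbf{D}_4$ from Theorem \ref{thm:app}. Your explicit check that $\lambda + \frac{1}{\lambda} = -2$ has the unique solution $\lambda = -1$ simply spells out a detail the paper leaves implicit.
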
\vspace{-3mm}
\begin{proof}
The assertion (1) follows from Remark \ref{Alambda10} and the case {\bf 10} of Table \ref{table:aut}.
For any $\lambda \neq -1$, it follows from Proposition \ref{D10iso} that ${H'}_{\!\!\lambda}$ is not isomorphic to ${H'}_{\!\!-1}$, as desired.
\end{proof}

Moreover, we can show the following results similarly to Theorem \ref{D8thm} and Proposition \ref{D8belong}. 

\begin{Thm}\label{D10thm}
With notation as above, the degree of $G$ is odd if and only if $p \equiv 9 \pmod{10}$, and the number of superspecial hyperelliptic curves $H$ of genus 4 such that ${\rm Aut}(H) \cong {\mathbf D}_{10}$ is equal to\vspace{-1mm}
\[
    \left\{
    \begin{array}{l}
        \frac{1}{2}(\deg{G}-1) \quad {\rm if}\ p \equiv 9 \hspace{-2mm}\pmod{10},\\[1mm]
        \frac{1}{2}\deg{G} \hspace{4.7mm}\qquad {\rm otherwise.}
    \end{array}
    \right.
\]
\end{Thm}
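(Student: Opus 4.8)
The plan is to follow exactly the template set by the proof of Theorem \ref{D8thm}, since the statement of Theorem \ref{D10thm} is structurally identical with $F$ replaced by $G$, $\mathbf{D}_8$ replaced by $\mathbf{D}_{10}$, and the exceptional congruence $p \equiv -1,9 \pmod{16}$ replaced by $p \equiv 9 \pmod{10}$. The two ingredients already available are Proposition \ref{D10iso} (the isomorphism criterion $\lambda_1 \sim \lambda_2 \iff \lambda_1 = \lambda_2$ or $\lambda_1 = 1/\lambda_2$) and Corollary \ref{D10special} (the curve ${H'}_{\!\!\lambda}$ has full automorphism group strictly larger than $\mathbf{D}_{10}$, namely $\mathbf{C}_5 \rtimes \mathbf{D}_4$, precisely when $\lambda = -1$). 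Combined with Corollary \ref{spssp} (3), which says ${H'}_{\!\!-1}: y^2 = x^{10}+1$ is superspecial if and only if $p \equiv 9 \pmod{10}$, these give the complete picture of the root structure of $G$.

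First I would establish that $G$ is separable as a polynomial in $\lambda$. Since each $\alpha_k$ is a truncated Gauss hypergeometric series by Proposition \ref{alpha}, the same argument as in Theorem \ref{D8thm} --- modeled on \cite{Igusa}, \cite[$\S 1.4$]{IKO}, or \cite[Lemma 3.1.3]{C6} --- shows each $\alpha_{3p-i}$ and $\alpha_{4p-j}$ is separable with no root at $\lambda = 0$ or $\lambda = 1$, hence their gcd $G$ is separable and has no root in $\{0,1\}$. Next, by Proposition \ref{D10iso}, for any root $\lambda_0 \in k \smallsetminus \{0,1\}$ the value $1/\lambda_0$ is also a root (since $G(\lambda_0)=0$ means ${H'}_{\!\!\lambda_0}$ superspecial, equivalently ${H'}_{\!\!1/\lambda_0}$ superspecial). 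The fixed points of $\lambda \mapsto 1/\lambda$ are $\lambda = \pm 1$; the value $\lambda = 1$ is excluded, and $\lambda = -1$ is a root of $G$ if and only if ${H'}_{\!\!-1}$ is superspecial, which by Corollary \ref{spssp} (3) happens exactly when $p \equiv 9 \pmod{10}$. This lets me factor
\[
    G =
    \left\{
    \begin{array}{ll}
        c\,(\lambda+1) \prod_{\lambda_0} (\lambda-\lambda_0)\left(\lambda-\tfrac{1}{\lambda_0}\right) & \quad {\rm if}\ p \equiv 9 \pmod{10},\\[1mm]
        c \prod_{\lambda_0} (\lambda-\lambda_0)\left(\lambda-\tfrac{1}{\lambda_0}\right) & \quad \mbox{otherwise,}
    \end{array}
    \right.
\]
for some constant $c \in k \smallsetminus \{0\}$ and distinct $\lambda_0 \in k \smallsetminus \{0,\pm 1\}$ paired with their inverses. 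This immediately gives the parity statement on $\deg G$.

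For the count, I would pass from roots of $G$ to isomorphism classes via Proposition \ref{D10iso}: the roots split into orbits $\{\lambda_0, 1/\lambda_0\}$ of size $2$, each orbit yielding a single isomorphism class of a superspecial curve, together with possibly the single fixed root $\lambda = -1$. The curve with $\lambda = -1$ has automorphism group $\mathbf{C}_5 \rtimes \mathbf{D}_4 \supsetneq \mathbf{D}_{10}$ by Corollary \ref{D10special}, so it does \emph{not} count among curves with $\mathrm{Aut}(H) \cong \mathbf{D}_{10}$ and must be removed. Thus in the case $p \equiv 9 \pmod{10}$ we discard the root $\lambda=-1$ and pair up the remaining $\deg G - 1$ roots into $\frac{1}{2}(\deg G - 1)$ classes; otherwise all $\deg G$ roots pair up into $\frac{1}{2}\deg G$ classes, matching the claimed formula. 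I do not expect any genuine obstacle here, as this is a verbatim adaptation; the only point demanding a little care is confirming that the separability argument for $\mathbf{D}_8$ transfers cleanly to the degree-$5$ (rather than degree-$4$) setting, but since it relies only on the hypergeometric description of the $\alpha_k$ via Proposition \ref{alpha} and not on the specific modulus, it goes through unchanged.
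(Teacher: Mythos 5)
Your proposal is correct and matches the paper's intent exactly: the paper proves Theorem \ref{D10thm} simply by noting it follows ``similarly to Theorem \ref{D8thm},'' and your write-up is precisely that adaptation --- separability of $G$ via the hypergeometric description, pairing of roots $\{\lambda_0, 1/\lambda_0\}$ via Proposition \ref{D10iso}, and handling the fixed root $\lambda=-1$ via Corollary \ref{spssp}~(3) and Corollary \ref{D10special}. No gaps; your closing remark that the argument is insensitive to the modulus (degree $5$ versus degree $4$) is also the right justification for why the transfer is clean.
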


\begin{Prop}\label{D10belong}
All roots of the polynomial $G$ belong to $\mathbb{F}_{p^4}$.
\end{Prop}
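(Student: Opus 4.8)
\textbf{Proof proposal for Proposition \ref{D10belong}.}

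The plan is to mimic exactly the argument used for Proposition \ref{D8belong}, since the only structural input is the isomorphy criterion (Proposition \ref{D10iso}), which has the identical shape $\lambda_1 \cong \lambda_2 \iff \lambda_1 = \lambda_2$ or $\lambda_1 = 1/\lambda_2$. The roots of $G$ are precisely the $\lambda$ for which ${H'}_{\!\!\lambda}$ is superspecial, so it suffices to show that every such $\lambda$ lies in $\mathbb{F}_{p^4}$. First I would invoke the standard fact that any superspecial curve admits a model over $\mathbb{F}_{p^2}$; hence if ${H'}_{\!\!\lambda}$ is superspecial there is a curve $H$ defined over $\mathbb{F}_{p^2}$ with ${H'}_{\!\!\lambda} \cong H$.

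Next I would introduce the Frobenius twist: let $\sigma : k \to k$ be the $p^2$-power map and form ${H'}_{\!\!\lambda}{}^{(\sigma)} := {H'}_{\!\!\lambda} \otimes_{k,\sigma} k$. Because $H$ is already defined over $\mathbb{F}_{p^2}$, applying $\sigma$ fixes it, so ${H'}_{\!\!\lambda}{}^{(\sigma)} \cong H \cong {H'}_{\!\!\lambda}$. On the other hand, twisting the defining equation $y^2 = (x^5-1)(x^5-\lambda)$ by $\sigma$ simply replaces $\lambda$ by $\sigma(\lambda)$, giving ${H'}_{\!\!\lambda}{}^{(\sigma)} \cong {H'}_{\!\!\sigma(\lambda)}$. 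Combining the two identifications yields ${H'}_{\!\!\lambda} \cong {H'}_{\!\!\sigma(\lambda)}$.

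Now I would feed this into Proposition \ref{D10iso}: the isomorphism ${H'}_{\!\!\lambda} \cong {H'}_{\!\!\sigma(\lambda)}$ forces
\[
    \lambda + \frac{1}{\lambda} = \sigma(\lambda) + \frac{1}{\sigma(\lambda)} = \sigma\!\left(\lambda + \frac{1}{\lambda}\right),
\]
so the quantity $\lambda + 1/\lambda$ is fixed by $\sigma$ and therefore belongs to $\mathbb{F}_{p^2}$. Consequently $\lambda$ and $1/\lambda$ are the two roots of the quadratic $T^2 - (\lambda + 1/\lambda)T + 1$ over $\mathbb{F}_{p^2}$, whence $\lambda \in \mathbb{F}_{p^4}$, as desired.

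I do not anticipate a genuine obstacle here, since the proof is a verbatim transcription of Proposition \ref{D8belong} with the exponent $8$ replaced by $10$ throughout. The only points that warrant a line of care are verifying that the $\sigma$-twist of ${H'}_{\!\!\lambda}$ is again of the form ${H'}_{\!\!\sigma(\lambda)}$ (immediate from the rationality of the coefficients $-1,-\lambda$ of the defining polynomial over the prime field together with $\sigma$ acting on $\lambda$) and confirming that Proposition \ref{D10iso} supplies an honest "if and only if" so that the isomorphism genuinely yields the equality of $\lambda + 1/\lambda$ values rather than merely one implication. Given these, the conclusion $\lambda \in \mathbb{F}_{p^4}$ is forced.
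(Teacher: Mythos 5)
Your proposal is correct and is essentially the paper's own argument: the paper proves Proposition \ref{D10belong} by declaring it "similar to Proposition \ref{D8belong}", whose proof is exactly your chain — superspecial curves descend to $\mathbb{F}_{p^2}$, the $p^2$-power twist gives ${H'}_{\!\!\lambda} \cong {H'}_{\!\!\sigma(\lambda)}$, Proposition \ref{D10iso} then forces $\lambda + 1/\lambda \in \mathbb{F}_{p^2}$, and the quadratic $T^2 - (\lambda + 1/\lambda)T + 1$ yields $\lambda \in \mathbb{F}_{p^4}$. No gaps; your two cautionary checks (rationality of the twist and the "if and only if" in the isomorphy criterion) are exactly the points the paper's template relies on.
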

{
As in the case treated in Subsection \ref{subsec:D8}, we expect from our computational results that $\deg{G}$ (and thus the number of listed superspecial curves) is bounded by a constant not depending on $p$, see Table \ref{table:new} below.} As a result of the above discussion, we obtain the following algorithm:
\begin{Alg}\label{alg:D10}
    ~
\begin{description}
\item[{\it Input:}] A rational prime $p\geq 7$.
\item[{\it Output:}] A list $\mathcal{L}$ of representatives for the isomorphism classes of superspecial genus-$4$ hyperelliptic curves $H$ over $\overline{\mathbb{F}_p}$ with $\mathrm{Aut}(H) \cong \mathbf{D}_{10}$.
\end{description}
\begin{description}
\item[Step 1.] Compute $\beta_{3p-i}$ and $\beta_{4p-j}$ by using Proposition \ref{alpha} {(see also the first paragraph in the proof of Theorem \ref{D8comp})}, where $(i,j)$ denotes a pair of integers defined in \eqref{D10ij}.
\item[Step 2.] {Compute the polynomial $G := {\rm gcd}(\beta_{3p-i},\beta_{4p-j})$ and its roots $\lambda \in \mathbb{F}_{p^4}$.}
\item[Step 3.] For each $\lambda \neq -1$ obtained in Step 2, if not $H_{1/\lambda}'$ already belongs to $\mathcal{L}$, then set $\mathcal{L} \leftarrow \mathcal{L} \hspace{0.5mm}\cup \{H_\lambda'\}$. Finally, output a list $\mathcal{L}$.
\end{description}
\end{Alg}

The complexity of this algorithm can be estimated as follows:

\begin{Thm}\label{D10comp}
We can compute the number of superspecial genus-$4$ hyperelliptic curves $H$ with $\mathrm{Aut}(H) \cong \mathbf{D}_{10}$ in $\tilde{O}(p)$ operations in $\mathbb{F}_{p}$, and we can list all of their isomorphism classes in $\tilde{O}(p+d^2)$ operations in $\mathbb{F}_{p^4}$ with $d := \deg{G}=O(p)$.
{
Moreover, the complexity of listing up the isomorphism classes becomes $\tilde{O}(p)$ operations in $\mathbb{F}_{p^4}$, if $\deg{G}$ is bounded by a constant not depending on $p$.}
\end{Thm}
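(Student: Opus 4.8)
The plan is to follow the proof of Theorem~\ref{D8comp} almost verbatim, replacing the role of the $\beta$-entries for $H_\lambda$ by that of the $\alpha$-entries for ${H'}_{\!\!\lambda}$ and the modulus $8$ by $10$; the two situations are entirely parallel, with the index $r=4$ of the $\mathbf{D}_8$-family replaced by $r=5$ of the $\mathbf{D}_{10}$-family. All the structural ingredients are already in place: Proposition~\ref{alpha} expresses each $\alpha_d$ as a truncated hypergeometric series, Theorem~\ref{D10thm} gives the count in terms of $\deg G$, Proposition~\ref{D10iso} supplies the isomorphy relation $\lambda\leftrightarrow 1/\lambda$, and Proposition~\ref{D10belong} guarantees that every root of $G$ lies in $\mathbb{F}_{p^4}$.

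First I would bound the cost of Step~1. By Proposition~\ref{alpha}, for the pair $(i,j)$ prescribed in \eqref{D10ij} each of $\alpha_{3p-i}$ and $\alpha_{4p-j}$ equals $(-1)^{k}\binom{e}{p-1-k}\,G^{(k)}(1/2,1+k,3/2+k\,;\lambda)$ for a suitable $k=O(p)$, and is therefore a univariate polynomial in $\lambda$ of degree $O(p)$. Using the same recurrence $g_n=\frac{(a+n-1)(b+n-1)}{(c+n-1)n}\,g_{n-1}$ as in the proof of Theorem~\ref{D8comp}, one evaluates each truncated series coefficient-by-coefficient in $O(p)$ operations in $\mathbb{F}_p$. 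Then in Step~2 the polynomial $G={\rm gcd}(\alpha_{3p-i},\alpha_{4p-j})$ is computed by the fast gcd algorithm in $\tilde O(p)$ operations in $\mathbb{F}_p$, since both arguments have degree $O(p)$. By Theorem~\ref{D10thm} the number of isomorphism classes is then read off directly from $\deg G$, which proves the counting assertion in $\tilde O(p)$ operations in $\mathbb{F}_p$.

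For the listing I would next factor $G$ to recover its roots. The separability of $G$, established in (the proof of) Theorem~\ref{D10thm} by the Igusa-type argument of \cite[\S1.4]{IKO} applied to the hypergeometric entries, means the roots can be extracted purely by equal-degree factorization, e.g.\ Cantor--Zassenhaus, whose cost here is $O(d^2\log d\,\log p^4)=\tilde O(d^2)$ operations in $\mathbb{F}_{p^4}$; Proposition~\ref{D10belong} ensures this is the correct field. Finally, Step~3 removes the redundancy $\lambda\leftrightarrow 1/\lambda$ coming from Proposition~\ref{D10iso} (and discards $\lambda=-1$, which yields $\mathbf{C}_5\rtimes\mathbf{D}_4$ by Corollary~\ref{D10special}) in $O(d\log d)$ operations in $\mathbb{F}_{p^4}$, e.g.\ by sorting. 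Adding the three contributions gives the stated $\tilde O(p+d^2)$, and specializing to $d=O(1)$ collapses Step~2 to $\tilde O(p)$ and Step~3 to $O(1)$, yielding the final $\tilde O(p)$ bound in $\mathbb{F}_{p^4}$.

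Since every building block is inherited from the $\mathbf{D}_8$ analysis, I do not anticipate a genuine obstacle. The only point needing care is the bookkeeping that the two chosen indices $3p-i$ and $4p-j$ are indeed multiples of $5$ and correspond to exponents $k$ in the admissible range of Proposition~\ref{alpha}, so that the hypergeometric formula—and in particular the degree estimate $\deg G=O(p)$—applies; this is a routine case-by-case check on $p\bmod 10$, mirroring the verification underlying \eqref{D8ij}.
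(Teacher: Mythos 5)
Your proposal is correct and follows essentially the same route as the paper, which proves Theorem~\ref{D8comp} in detail and then disposes of Theorem~\ref{D10comp} by noting it is shown "similarly" --- your write-up is precisely that similarity made explicit (hypergeometric evaluation of the two entries, fast gcd, Cantor--Zassenhaus over $\mathbb{F}_{p^4}$ justified by Proposition~\ref{D10belong}, and deduplication via Proposition~\ref{D10iso}). No gaps; your closing remark about checking the index bookkeeping mod $10$ is exactly the routine verification the paper leaves implicit.
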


\begin{proof}
This can be shown similarly to the proof of Theorem \ref{D8comp}.
\end{proof}

\section{Computational results}\label{sec:comp}

We implemented {Algorithms \ref{alg:D4}, \ref{alg:D8}, and \ref{alg:D10}} on {Magma V2.26-10 on a computer with Intel(R) Xeon(R) Gold 6130 CPU at 2.10GHz with 16 cores (32 threads) and 768GB memory.}
Executing the implemented algorithm, we obtain Theorem \ref{ThmB} in Section \ref{sec:intro}.
We here explain details dividing into enumeration results and search ones.

\newpage

\renewcommand{\arraystretch}{0.9}
\begin{table}[H]
\centering{
\caption{The total number of isomorphism classes of superspecial hyperelliptic curves $H$ of genus $4$ with $\mathrm{Aut}(H) \supset \mathbf{D}_4$ for each $p$ with $19 \leq p< 500$, and their classification by types of automorphism groups.
We denote $\mathbf{C}_{16} \rtimes \mathbf{C}_2$ and $\mathbf{C}_{5} \rtimes \mathbf{D}_4$ by $\mathbf{G}_{32}$ and $\mathbf{G}_{40}$ respectively.
}
\label{table:new}\vspace{3mm}
\hspace{-7mm}\begin{minipage}{58mm}
\scalebox{0.93}{
\begin{tabular}{c||c|c|c|c|c|c}\hline
$p$ & All & $\mathbf{D}_4$ & $\mathbf{D}_8$ & $\mathbf{D}_{10}$ & $\mathbf{G}_{32}$ & $\mathbf{G}_{40}$\\\hline
 19 &  2 &  1 & 0 & 0 & 0 & 1\\\hline
 23 &  1 &  1 & 0 & 0 & 0 & 0\\\hline
 29 &  2 &  1 & 0 & 0 & 0 & 1\\\hline
 31 &  3 &  1 & 1 & 0 & 1 & 0\\\hline
 37 &  4 &  3 & 1 & 0 & 0 & 0\\\hline
 41 &  2 &  0 & 0 & 1 & 1 & 0\\\hline
 43 &  2 &  2 & 0 & 0 & 0 & 0\\\hline
 47 &  5 &  4 & 0 & 0 & 1 & 0\\\hline
 53 &  6 &  6 & 0 & 0 & 0 & 0\\\hline
 59 &  4 &  2 & 0 & 1 & 0 & 1\\\hline
 61 &  4 &  2 & 0 & 1 & 0 & 0\\\hline
 67 &  4 &  4 & 0 & 0 & 0 & 0\\\hline
 71 &  6 &  4 & 1 & 1 & 0 & 0\\\hline
 73 &  6 &  5 & 0 & 0 & 1 & 0\\\hline
 79 &  9 &  7 & 0 & 0 & 1 & 1\\\hline
 83 & 10 & 10 & 0 & 0 & 0 & 0\\\hline
 89 &  7 &  4 & 0 & 1 & 1 & 1\\\hline
 97 &  4 &  4 & 0 & 0 & 0 & 0\\\hline
101 &  7 &  4 & 2 & 1 & 0 & 0\\\hline
103 & 10 &  9 & 1 & 0 & 0 & 0\\\hline
107 & 11 & 11 & 0 & 0 & 0 & 0\\\hline
109 &  7 &  5 & 0 & 1 & 0 & 1\\\hline
113 & 13 & 13 & 0 & 0 & 0 & 0\\\hline
127 & 14 & 13 & 0 & 0 & 1 & 0\\\hline
131 & 12 & 12 & 0 & 0 & 0 & 0\\\hline
137 & 11 &  9 & 1 & 0 & 1 & 0\\\hline
139 & 19 & 16 & 0 & 2 & 0 & 1\\\hline
149 & 15 & 13 & 0 & 1 & 0 & 1\\\hline
151 & 10 & 10 & 0 & 0 & 0 & 0\\\hline
157 & 16 & 13 & 3 & 0 & 0 & 0\\\hline
163 &  8 &  8 & 0 & 0 & 0 & 0\\\hline
167 & 19 & 17 & 2 & 0 & 0 & 0\\\hline
173 & 11 & 11 & 0 & 0 & 0 & 0\\\hline
179 & 23 & 19 & 0 & 3 & 0 & 1\\\hline
181 & 14 & 10 & 3 & 1 & 0 & 0\\\hline
191 & 37 & 31 & 3 & 2 & 1 & 0\\\hline
193 & 18 & 18 & 0 & 0 & 0 & 0\\\hline
197 & 12 &  9 & 3 & 0 & 0 & 0\\\hline
199 & 26 & 22 & 2 & 1 & 0 & 1\\\hline
211 & 22 & 20 & 0 & 2 & 0 & 0\\\hline
223 & 37 & 35 & 1 & 0 & 1 & 0\\\hline
227 & 15 & 15 & 0 & 0 & 0 & 0\\\hline
229 & 13 & 10 & 0 & 2 & 0 & 1\\\hline
233 & 13 & 11 & 1 & 0 & 1 & 0\\\hline
\end{tabular}}
\end{minipage}
\hspace{5mm}
\begin{minipage}{58mm}
\scalebox{0.93}{
\begin{tabular}{c||c|c|c|c|c|c}\hline
$p$ & All & $\mathbf{D}_4$ & $\mathbf{D}_8$ & $\mathbf{D}_{10}$ & $\mathbf{G}_{32}$ & $\mathbf{G}_{40}$\\\hline
239 & 33 & 30 & 0 & 1 & 1 & 1\\\hline
241 & 20 & 17 & 1 & 2 & 0 & 0\\\hline
251 & 27 & 27 & 0 & 0 & 0 & 0\\\hline
257 & 17 & 16 & 1 & 0 & 0 & 0\\\hline
263 & 35 & 34 & 1 & 0 & 0 & 0\\\hline
269 & 39 & 32 & 3 & 3 & 0 & 1\\\hline
271 & 48 & 40 & 5 & 2 & 1 & 0\\\hline
277 & 28 & 28 & 0 & 0 & 0 & 0\\\hline
281 & 14 & 11 & 1 & 1 & 1 & 0\\\hline
283 & 24 & 24 & 0 & 0 & 0 & 0\\\hline
293 & 19 & 19 & 0 & 0 & 0 & 0\\\hline
307 & 22 & 22 & 0 & 0 & 0 & 0\\\hline
311 & 44 & 42 & 1 & 1 & 0 & 0\\\hline
313 & 30 & 28 & 1 & 0 & 1 & 0\\\hline
317 & 26 & 26 & 0 & 0 & 0 & 0\\\hline
331 & 36 & 36 & 0 & 1 & 0 & 0\\\hline
337 & 22 & 22 & 0 & 0 & 0 & 0\\\hline
347 & 16 & 16 & 0 & 0 & 0 & 0\\\hline
349 & 35 & 32 & 0 & 2 & 0 & 1\\\hline
353 & 30 & 29 & 1 & 0 & 0 & 0\\\hline
359 & 56 & 53 & 0 & 2 & 0 & 1\\\hline
367 & 45 & 43 & 1 & 0 & 1 & 0\\\hline
373 & 28 & 28 & 0 & 0 & 0 & 0\\\hline
379 & 36 & 33 & 0 & 2 & 0 & 1\\\hline
383 & 58 & 56 & 1 & 0 & 1 & 0\\\hline
389 & 39 & 36 & 0 & 2 & 0 & 1\\\hline
397 & 27 & 27 & 0 & 0 & 0 & 0\\\hline
401 & 41 & 39 & 1 & 1 & 0 & 0\\\hline
409 & 35 & 30 & 1 & 2 & 1 & 1\\\hline
419 & 45 & 39 & 0 & 5 & 0 & 1\\\hline
421 & 36 & 32 & 4 & 0 & 0 & 0\\\hline
431 & 64 & 61 & 0 & 2 & 1 & 0\\\hline
433 & 41 & 40 & 1 & 0 & 0 & 0\\\hline
439 & 64 & 56 & 2 & 5 & 0 & 1\\\hline
443 & 37 & 37 & 0 & 0 & 0 & 0\\\hline
449 & 28 & 27 & 0 & 0 & 0 & 1\\\hline
457 & 43 & 42 & 0 & 0 & 1 & 0\\\hline
461 & 33 & 29 & 0 & 4 & 0 & 0\\\hline
463 & 47 & 44 & 2 & 0 & 1 & 0\\\hline
467 & 36 & 36 & 0 & 0 & 0 & 0\\\hline
479 & 73 & 68 & 1 & 2 & 1 & 1\\\hline
487 & 40 & 38 & 1 & 0 & 0 & 0\\\hline
491 & 43 & 42 & 0 & 1 & 0 & 0\\\hline
499 & 49 & 47 & 0 & 1 & 0 & 1\\\hline
\end{tabular}}
\end{minipage}}
\end{table}

\paragraph{Enumeration results.}
As indicated in Theorem \ref{ThmB}, our {enumeration} results involve
the number of isomorphism classes of superspecial hyperelliptic curves $H$ of genus $4$ in each type of automorphism group containing $\mathbf{D}_4$ for $19 \leq p < 500$, and they are summarized in Table \ref{table:new}.
Note that there is no such a curve in characteristic $p \leq 17$.
We see from Table \ref{table:new} that the number of superspecial $H$'s with $\mathrm{Aut}(H) \cong \mathbf{D}_4$ would follow $O(p)$, as estimated in Section \ref{subsec:D4}.
On the other hand, the number of superspecial $H$'s with automorphism group of other type might be bounded by a constant not depending on $p$.

We can easily increase the upper bound on $p$ in Table \ref{table:new}:
We have succeeded in finishing the computation of $\overline{\mathbb{F}_p}$-isomorphism classes for all $19 \leq p < 500$ within a few day in total, on the {computer} {described above.
For example, the execution for $p=499$ took only {1,441} seconds.
More specifically, the time taken for Step 1 was 971 seconds, while that for Steps 2--3 was 470 seconds.
}

{
\paragraph{Search results.}
Search here means that we find a single superspecial hyperelliptic curve $H$ of genus $4$ with $\mathrm{Aut}(H) \supset \mathbf{D}_4$.
For this, we construct and implement an algorithm based on the following strategy:
\begin{enumerate}
    \item {
    It follows from Corollary \ref{spssp} that there does exist a superspecial $H$ with $\mathrm{Aut}(H) \cong \mathbf{C}_{16} \rtimes \mathbf{C}_2$ (resp.\ $\mathrm{Aut}(H) \cong \mathbf{C}_{5} \rtimes \mathbf{D}_4$) if $p \equiv -1,9 \pmod{16}$ (resp.\ $p \equiv 9 \pmod{10}$).
    In these cases,} we need not to execute any of Algorithms \ref{alg:D4}, \ref{alg:D8}, nor \ref{alg:D10}.
    {Otherwise, we proceed with (2) and (3) in order.}
    \item We try to find a superspecial $H$ with $\mathrm{Aut}(H) \cong \mathbf{D}_8$ or $\mathrm{Aut}(H) \cong \mathbf{D}_{10}$ by Algorithms \ref{alg:D8} and \ref{alg:D10}.
    For our purpose, it suffices to compute one root of $F$ (resp.\ $G$) in Step 2 of Algorithm \ref{alg:D8} (resp.\ Algorithm \ref{alg:D10}).
    \item When we fail to produce a superspecial $H$ by (2), we next execute Algorithm \ref{alg:D4}.
    For the efficiency of the search, we modify Algorithm \ref{alg:D4} as follows:
    Once a single genus-$2$ superspecial curve $C$ is generated in Step 1 of Algorithm \ref{alg:D4}, we check whether there exists $C'$ with $C \cong C'$ such that the normalization $H$ of $C \times_{\mathbb{P}^1}C'$ is a genus-$4$ superspecial hyperelliptic curve with $\mathrm{Aut}(H) \cong \mathbf{D}_4$, by using Lemma \ref{lem:D4}.
\end{enumerate}}

Executing the search algorithm on Magma, we succeeded in finding a superspecial hyperelliptic curve $H$ of genus $4$ with $\mathrm{Aut}(H) \supset \mathbf{D}_4$ for every $p$ with $19 \leq p < 7000$, as in \cite{OKH}.
We have succeeded in finishing the search for all $p$ in the interval within a week in total, on the same {computer} {used for the enumeration.
This upper bound can be also updated easily:
\begin{itemize}
    \item Among $19 \leq p < 7000$, the maximal time for finding a single superspecial curve was 19,144 seconds ($\approx$ 5.4 hours) for $p=6763$.
    In this case, we failed to find such a curve by (2) of the above strategy, and most of the time was spent for (3) (the time spent for (3) was 19,119 seconds), whose complexity is bounded by that of Algorithm \ref{alg:D4} (i.e., {${O}(p^3)$} as estimated in Section \ref{subsec:D4}).
    \item Even in the case where we fail to find any superspecial curve in (2), there are many large $p$ for which such a curve was found in shorter time (e.g., only 6.8 seconds spent for proceeding with (3) when $p=6373$).
    \item Fortunately if we can find a single superspecial curve only by (2) (not proceeding with (3)), the time spent for (2) is expected to follow {$\tilde{O}(p)$} (cf.\ Theorems \ref{D8comp} and \ref{D10comp}), and thus in most cases it would be quite shorter than that for (3).
    For example, we found such a curve by (2) in 27.6 seconds for $p=6997$.
\end{itemize}}

{
From our search results, we obtain the following conjecture:}

\begin{Conj}\label{conj}
\begin{enumerate}
    \item There exists a superspecial hyperelliptic curve $H$ of genus $4$ with {$\mathrm{Aut}(H) \cong \mathbf{D}_4$} in arbitrary characteristic $p$ with $p \geq 19$ and $p \neq 41$.
    \item There does not exist any superspecial hyperelliptic curve $H$ of genus $4$ with $\mathrm{Aut}(H) \supset \mathbf{D}_{8}$ if $p \equiv 3 \pmod{8}$.
    \item {There does not exist any superspecial hyperelliptic curve $H$ of genus $4$ with $\mathrm{Aut}(H) \supset \mathbf{D}_{10}$ if $p \equiv 3,7 \pmod{10}$.}
    \end{enumerate}
\end{Conj}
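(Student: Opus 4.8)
The three parts have genuinely different flavours, so the plan is to treat the two non-existence statements (2) and (3) together and the existence statement (1) separately. For parts (2) and (3), the plan is to show directly that the polynomial $F$ of Subsection~\ref{subsec:D8} (resp.\ the polynomial $G$ of Subsection~\ref{subsec:D10}) is a nonzero constant whenever $p \equiv 3 \pmod 8$ (resp.\ $p \equiv 3,7 \pmod{10}$); by Theorem~\ref{D8thm} (resp.\ Theorem~\ref{D10thm}) this forces the count to be $0$. Concretely, I would write the two relevant entries $\beta_{3p-i},\beta_{4p-j}$ (resp.\ $\alpha_{3p-i},\alpha_{4p-j}$) as the explicit truncated hypergeometric polynomials of Proposition~\ref{alpha}, and try to prove that they are coprime in $k[\lambda]$ for every $p$ in the stated congruence class. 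The structural reason to expect this is that $\operatorname{Jac}(H_\lambda)$ (resp.\ $\operatorname{Jac}(H'_{\!\lambda})$) carries an action of $\mathbb{Z}[\zeta_8]$ (resp.\ $\mathbb{Z}[\zeta_5]$) whose representation on $H^0(\Omega^1)$ has signature $(1,1,1,1)$ on the primitive characters: the congruence $p\equiv 3,7 \pmod{10}$ is exactly the condition that $p$ is inert in $\mathbb{Q}(\zeta_5)$, so Frobenius permutes the four characters in a single $4$-cycle, while $p\equiv 3\pmod 8$ singles out the Frobenius orbit pattern $\{\chi_1,\chi_3\},\{\chi_5,\chi_7\}$ on the primitive characters of $\mathbb{Q}(\zeta_8)$. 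I would organise the Cartier--Manin entries according to these orbits, use Corollary~\ref{divisible} to reduce the four entries to the two appearing in $F$ (resp.\ $G$), and argue that a common root would contradict this orbit structure.

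An alternative, and probably more robust, route for (2) and (3) is to decompose $\operatorname{Jac}(H_\lambda)$ (resp.\ $\operatorname{Jac}(H'_{\!\lambda})$) under the extra involutions of $\mathbf{D}_8$ (resp.\ $\mathbf{D}_{10}$) into a product of lower-dimensional Jacobians, mirroring the $(2,2,2,2)$-isogeny decomposition used in the proof of Corollary~\ref{spssp}, and to recognise the factors as members of a one-parameter family of genus-$2$ curves carrying their own automorphisms. Since $H$ is superspecial if and only if every factor is, the forbidden congruences should then be read off from the known existence conditions for superspecial genus-$2$ curves with automorphisms, in the spirit of \cite{IKO} and \cite{C6}. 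The subtlety, visible already in Table~\ref{table:new} (where $p\equiv 5\pmod 8$ does admit superspecial $\mathbf{D}_8$-curves while $p\equiv 3\pmod 8$ does not, even though the two congruences give Frobenius orbits of the same size), is that the obstruction is finer than the splitting type of $p$ alone; it genuinely involves the signature together with complex conjugation, so any purely Newton-polygon argument will be too coarse and the explicit coprimality cannot be entirely sidestepped.

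For the existence statement (1), the plan is to use Lemma~\ref{lem:D4}: a superspecial $H$ with $\mathrm{Aut}(H)\cong \mathbf{D}_4$ exists if and only if there is a superspecial genus-$2$ curve $C_{\lambda_1,\lambda_2,\lambda_3}$ together with some $\lambda_3'\neq \lambda_3$ such that $C_{\lambda_1,\lambda_2,\lambda_3}\cong C_{\lambda_1,\lambda_2,\lambda_3'}$ and the reconstructed $(c_2,c_3,c_4,c_5)$ of \eqref{eq:ci} satisfy $c_2c_3=c_4c_5=1$. I would try to prove that such an admissible self-pairing always exists for $p\geq 19$ (with the single recorded exception $p=41$) by combining the mass formula for superspecial genus-$2$ curves, of which there are $\Theta(p^3)$ by \cite[Theorem 3.3]{IKO}, all defined over $\mathbb{F}_{p^2}$ by \cite[Main Theorem A]{Ohashi}, with the rarity estimate of Proposition~\ref{lem:SelfCol}, so as to force the admissible locus to be nonempty.

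The hard part, and the reason this stays a conjecture rather than a theorem, is twofold. For (2) and (3) it is the uniform-in-$p$ coprimality of two truncated hypergeometric polynomials: proving that a $\gcd$ is \emph{constant for every} $p$ in a congruence class is a statement at least as hard as the Igusa-type separability results already invoked in Theorem~\ref{D8thm}, and I do not see how to obtain it without either a genuinely cohomological obstruction or a delicate analysis of the common roots of hypergeometric truncations modulo $p$. For (1) the obstacle is that guaranteeing existence for \emph{all} large $p$ requires an \emph{effective} lower bound showing the self-pairing locus is nonempty; mass-formula heuristics make this overwhelmingly plausible, and the lone anomaly $p=41$ shows any such bound must be delicate, but converting the expected count into a guaranteed nonempty intersection for every $p$ is precisely the step that current techniques leave open.
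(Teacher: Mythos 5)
The first thing to note is that the paper does not prove this statement at all: it is stated as Conjecture~\ref{conj}, and its only support is computational --- the enumeration in Table~\ref{table:new} for $19 \leq p < 500$ produced by Algorithms~\ref{alg:D4}, \ref{alg:D8} and \ref{alg:D10}, plus the search up to $p < 7000$ --- with the theoretical proof explicitly deferred to future work in the concluding section. So your attempt should be judged as a research plan, and as such it is correctly set up inside the paper's own framework. Your reduction of parts (2) and (3) to the assertion that $F$ (resp.\ $G$) is a nonzero constant is exactly right: $p \equiv 3 \pmod{8}$ forces $p \not\equiv -1, 9 \pmod{16}$, so by Theorem~\ref{D8thm} the count equals $\frac{1}{2}\deg F$, and likewise $p \equiv 3,7 \pmod{10}$ excludes $p \equiv 9 \pmod{10}$, so Theorem~\ref{D10thm} gives $\frac{1}{2}\deg G$; hence both parts are equivalent to the coprimality in $k[\lambda]$ of the two hypergeometric truncations of Proposition~\ref{alpha}, which is precisely the open core. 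Your structural heuristics are also sound as far as they go ($p$ is inert in $\mathbb{Q}(\zeta_5)$ exactly when $p \equiv 3,7 \pmod{10}$, and the $\mathbb{Z}[\zeta_8]$- resp.\ $\mathbb{Z}[\zeta_5]$-eigenspace decomposition reproduces the block shapes of the Cartier--Manin matrices computed in Subsections~\ref{subsec:D8}--\ref{subsec:D10}), and your observation that the contrast between $p \equiv 5$ and $p \equiv 3 \pmod 8$ in Table~\ref{table:new} rules out any argument depending only on the splitting type of $p$ is accurate and well taken.

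Two corrections are nonetheless needed. First, Lemma~\ref{lem:D4} characterizes $\mathrm{Aut}(H) \supset \mathbf{D}_4$, not $\mathrm{Aut}(H) \cong \mathbf{D}_4$, whereas part (1) asserts the exact isomorphism type; the exclusion of $p = 41$ exists precisely because there the only superspecial curves with $\mathrm{Aut} \supset \mathbf{D}_4$ have the larger groups $\mathbf{D}_{10}$ and $\mathbf{C}_{16} \rtimes \mathbf{C}_2$ (see the $p=41$ row of Table~\ref{table:new}). So your mass-formula plan must additionally show that not every admissible self-pairing lands in the thin loci with larger automorphism group --- a crude counting bound cannot distinguish this, as the $p=41$ anomaly demonstrates. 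Second, Proposition~\ref{lem:SelfCol} and Expectation~\ref{exp:D4} are probabilistic statements about uniformly random triples $(\lambda_1,\lambda_2,\lambda_3)$, while the superspecial Rosenhain triples are highly non-random (they lie in $\mathbb{F}_{p^2}$ by \cite{Ohashi} and satisfy algebraic constraints), so converting the expected count $O(p)$ into guaranteed nonemptiness for every $p$ would require an equidistribution-type input that neither you nor the paper possesses --- as you candidly concede. In short, your proposal agrees with the paper in treating the statement as open, reformulates it correctly via Theorems~\ref{D8thm} and \ref{D10thm} and Lemma~\ref{lem:D4}, and your identification of the two genuinely hard steps (uniform-in-$p$ coprimality of hypergeometric truncations, and effective nonemptiness of the self-pairing locus away from the larger-automorphism loci) is exactly where the difficulty lies.
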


\section{Concluding remarks}

We proposed an algorithm (Algorithm \ref{alg:D4}) to enumerate superspecial hyperelliptic curves $H$ of genus $4$ with $\mathrm{Aut}(H) \supset \mathbf{D}_4$, as a simpler version of the algorithm provided in \cite{OKH}.
{All the computations of the algorithm are done in $\mathbb{F}_{p^4}$, and the total complexity is expected to be $O(p^3)$ operations in $\mathbb{F}_{p^4}$.}
More efficient algorithms (Algorithms \ref{alg:D8} and \ref{alg:D10}) specific to the case where $\mathrm{Aut}(H) \supset \mathbf{D}_8$ or $\mathrm{Aut}(H) \supset \mathbf{D}_{10}$ were also constructed by utilizing Cartier-Manin matrices and Gauss' hypergeometric series.
{Their complexities are both $\tilde{O}(p^2)$ {(in practice $\tilde{O}(p)$)} operations in $\mathbb{F}_{p^4}$, which would become $\tilde{O}(p)$ operations in $\mathbb{F}_p$ when one computes only the number of isomorphism classes}.
By executing the algorithms on Magma, we succeeded in enumerating all the isomorphism classes of superspecial hyperelliptic curves $H$ of genus $4$ with $\mathrm{Aut}(H) \supset \mathbf{D}_4$ for every $p$ from $19$ to $500$.
{We also succeeded in finding such a curve for every $p$ from $19$ to $7000$.
From our computational results, we obtain a conjecture (Conjecture \ref{conj}) that there exists a superspecial hyperelliptic curve $H$ of genus $4$ with $\mathrm{Aut}(H) \supset \mathbf{D}_4$ in arbitrary characteristic $p$ with $p \geq 19$.}

A future work is to theoretically prove Conjecture \ref{conj}.
Constructing an efficient algorithm in the case where $\mathrm{Aut}(H) \cong \mathbf{C}_4$ or $\mathrm{Aut}(H) \cong \mathbf{Q}_8$ would be also an interesting problem.
\subsection*{Acknowledgments}
This research was conducted under a contract of “Research and development on new generation cryptography for secure wireless communication services" among “Research and Development for Expansion of Radio\,Wave Resources (JPJ000254)", which was supported by the Ministry of Internal Affairs and Communications, Japan.
This work was supported by JSPS Grant-in-Aid for Young Scientists 20K14301 and 23K12949.


\renewcommand{\thesection}{\Alph{section}}
\setcounter{section}{0}
\theoremstyle{definition}
\newtheorem{defi}{Definition}[section]
\newtheorem{lem}[defi]{Lemma}
\newtheorem{prop}[defi]{Proposition}
\newtheorem{thm}[defi]{Theorem}
\newtheorem{cor}[defi]{Corollary}

\section{{Some facts on Gauss' hypergeometric series}}
{In this appendix, we correct some facts related to Gauss' hypergeometric series, which are used in the main contents of this paper.}
Let $K$ be an arbitrary field of characteristic $p \geq 5$.
\begin{lem}
For an integer $d > 0$, the following statements are true:
\begin{enumerate}
\item If $0 \leq b \leq a$, then we have
\[
    \sum_{n=0}^d \binom{a}{n}\binom{a}{b-n} z^n = \binom{a}{b}G^{(d)}(-a,-b,1+a-b\,;z).
\]
\item If $a \leq b \leq 2a$, then we have
\[
    \sum_{n=0}^d \binom{a}{n}\binom{a}{b-n} z^n = z^{b-a} \cdot \binom{a}{2a-b}G^{(d)}(-a,b-2a,1+b-a\,;z).
\]
\end{enumerate}
\end{lem}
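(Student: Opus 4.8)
The plan is to treat both sides of each identity as polynomials in $z$ of degree at most $d$ and to compare coefficients power by power; the first identity carries all the real content, and the second I would deduce from it by an index shift. Throughout I will use the translations of the Pochhammer symbols into factorials valid for non-negative integers, namely $(1\,;n)=n!$, $(-a\,;n)=(-1)^{n}a!/(a-n)!$, $(-b\,;n)=(-1)^{n}b!/(b-n)!$, and $(1+a-b\,;n)=(a-b+n)!/(a-b)!$ when $a\geq b$, together with the observation that $(-b\,;n)=0$ as soon as $n>b$.

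For statement (1), I would expand $G^{(d)}(-a,-b,1+a-b\,;z)$ from its definition, so that the coefficient of $z^{n}$ on the right-hand side is $\binom{a}{b}\,(-a\,;n)(-b\,;n)/\bigl((1+a-b\,;n)(1\,;n)\bigr)$, while on the left it is simply $\binom{a}{n}\binom{a}{b-n}$. Substituting the factorial expressions, the two factors $(-1)^{n}$ cancel and both coefficients collapse to the symmetric quantity $(a!)^{2}/\bigl(n!\,(a-n)!\,(b-n)!\,(a-b+n)!\bigr)$; this factorial identity is the only genuine computation in the lemma. Since it holds for every $n$ — with both sides vanishing once $n>b$, because $\binom{a}{b-n}=0$ and $(-b\,;n)=0$ there — truncating each polynomial at degree $d$ preserves the equality, giving statement (1) for all $d$.

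For statement (2), the plan is to reduce to statement (1) rather than recompute. Setting $b':=2a-b$, the hypothesis $a\leq b\leq 2a$ yields $0\leq b'\leq a$, which is the range where (1) applies. In the left-hand sum I would substitute $n=(b-a)+j$ and apply the symmetry $\binom{a}{k}=\binom{a}{a-k}$ to rewrite $\binom{a}{n}\binom{a}{b-n}$ as $\binom{a}{j}\binom{a}{b'-j}$, the terms with $j<0$ dropping out since $\binom{a}{a-j}=0$ there. This turns the left-hand side into $z^{b-a}\sum_{j}\binom{a}{j}\binom{a}{b'-j}z^{j}$, and invoking (1) with $b'$ in place of $b$ gives $z^{b-a}\binom{a}{b'}G^{(d-(b-a))}(-a,-b',1+a-b'\,;z)$; the parameter translations $b'=2a-b$, $-b'=b-2a$, and $1+a-b'=1+b-a$ then match those of the asserted right-hand side.

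The step I expect to demand the most care is the bookkeeping of the truncation order in statement (2). The index shift sends the cut-off $n\leq d$ to $j\leq d-(b-a)$, whereas the stated right-hand side carries $G^{(d)}$; the two orders agree only because the factor $(b-2a\,;j)=(-b'\,;j)$ vanishes for $j>b'=2a-b$, so every term between $j=d-(b-a)+1$ and $j=d$ contributes nothing once $d$ is large enough to have already captured the entire surviving support (equivalently $d\geq a$). I would make this vanishing explicit, since for very small $d$ the shifted truncation and $G^{(d)}$ genuinely differ; in the regime relevant to the applications, where $d$ exceeds the effective support $2a-b$, the identity holds exactly as written.
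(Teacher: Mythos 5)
Your proposal is correct, and it is in fact the entire content here: the paper's own proof of this lemma is the single sentence ``This follows from a simple computation,'' so your coefficientwise argument is precisely the computation being waved at. For (1), matching the $z^n$-coefficients via $(-a\,;n)=(-1)^n a!/(a-n)!$, $(-b\,;n)=(-1)^n b!/(b-n)!$, $(1+a-b\,;n)=(a-b+n)!/(a-b)!$ and collapsing both sides to $(a!)^2/\bigl(n!\,(a-n)!\,(b-n)!\,(a-b+n)!\bigr)$ is exactly right, including the mutual vanishing for $n>b$. Your reduction of (2) to (1) via $b'=2a-b$, the shift $n=(b-a)+j$, and the symmetry $\binom{a}{b-n}=\binom{a}{a-j}=\binom{a}{j}$ is also sound and is surely what the authors intended.

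Your caveat about the truncation order in (2) is a genuine (if minor) catch that the paper glosses over: as printed, the identity is false for small $d$ — take $a=1$, $b=2$, $d=0$, where the left side is $\binom{1}{0}\binom{1}{2}=0$ but the right side is $z\cdot\binom{1}{0}\,G^{(0)}(-1,0,2\,;z)=z$. Your analysis that the shifted sum yields $G^{(d-(b-a))}$ and that the extra terms $d-(b-a)<j\leq d$ of $G^{(d)}$ die only because $(b-2a\,;j)=0$ for $j>2a-b$, equivalently once $d\geq a$, is exactly the right bookkeeping; and in the paper's sole application (the proposition expressing $\gamma_k$ for $e\leq k\leq p-1$, with $a=e$, $b=k$, $d=k$) one indeed has $d=k\geq e=a$, so nothing downstream is affected. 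One further point worth a sentence in your write-up, since the lemma is stated over a field of characteristic $p$: your factorial manipulations take place over $\mathbb{Q}$, and reducing mod $p$ requires the denominators $(c\,;n)$ to be units; this is automatic in the application range (there $c+n-1\leq e<p$), but is not true for arbitrary $a,b,d$ mod $p$ — another looseness in the statement that your proof, like the paper's non-proof, implicitly works around.
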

\begin{proof}
This follows from a simple computation.
\end{proof}

We consider an element $\lambda \neq 0,1$ of the field $K$. Let $\gamma_k$ be the $x^k$-coefficient of the polynomial $\{(x-1)(x-\lambda)\}^e$ with $e := (p-1)/2$.
\begin{lem}
For an integer $0 \leq k \leq e$, we have
\[
    \gamma_k = (-1)^{p-1-k}\sum_{n=0}^k\binom{(p-1)/2}{n}\binom{(p-1)/2}{k-n}\lambda^{e-k+n}.
\]
\end{lem}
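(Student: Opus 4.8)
The plan is to compute $\gamma_k$ by expanding $\{(x-1)(x-\lambda)\}^e = (x-1)^e (x-\lambda)^e$ with the binomial theorem and reading off the coefficient of $x^k$ directly; no hypergeometric machinery is needed here, as this lemma is precisely the elementary input feeding into Proposition~\ref{alpha}.

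First I would write
\[
    (x-1)^e = \sum_{i=0}^e \binom{e}{i}(-1)^{e-i}x^i, \qquad (x-\lambda)^e = \sum_{j=0}^e \binom{e}{j}(-\lambda)^{e-j}x^j,
\]
and form the Cauchy product, so that the coefficient of $x^k$ is obtained by setting $i=n$ and $j=k-n$ and summing over $0 \le n \le k$. Here I would note that the hypothesis $0 \le k \le e$ guarantees $n \le k \le e$ and $k-n \le k \le e$ for every term, so no binomial coefficient in the range falls outside $\{0,\dots,e\}$ and the summation bounds are exactly those in the statement. This yields
\[
    \gamma_k = \sum_{n=0}^k \binom{e}{n}\binom{e}{k-n}(-1)^{e-n}(-\lambda)^{e-k+n}.
\]

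The remaining work is pure sign bookkeeping: I would extract the power of $-1$ by combining $(-1)^{e-n}$ with the sign inside $(-\lambda)^{e-k+n} = (-1)^{e-k+n}\lambda^{e-k+n}$, giving a total factor $(-1)^{(e-n)+(e-k+n)} = (-1)^{2e-k}$. Since $2e = p-1$, this equals $(-1)^{p-1-k}$, which is independent of $n$ and may be pulled out of the sum, leaving exactly
\[
    \gamma_k = (-1)^{p-1-k}\sum_{n=0}^k \binom{e}{n}\binom{e}{k-n}\lambda^{e-k+n}
\]
with $e = (p-1)/2$, as claimed. I do not anticipate any genuine obstacle: the only point requiring a moment of care is verifying that the range of summation is unaffected by the cutoff $k \le e$ and that the $n$-dependence cancels in the sign exponent, both of which are immediate.
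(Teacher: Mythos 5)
Your proposal is correct and matches the paper's own proof essentially verbatim: both expand $(x-1)^e$ and $(x-\lambda)^e$ by the binomial theorem, take the Cauchy product, and compare coefficients of $x^k$, with the sign simplification $(-1)^{e-n}(-1)^{e-k+n}=(-1)^{2e-k}=(-1)^{p-1-k}$ implicit in the paper and spelled out by you. Your added remark that $0\le k\le e$ keeps the summation bounds at $0\le n\le k$ is a small but accurate point of care that the paper leaves tacit.
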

\begin{proof}
Let $e := (p-1)/2$, then we can compute
\[
    (x-1)^e= \sum_{i=0}^e\!\binom{e}{i}(-1)^{e-i}x^i, \quad (x-\lambda)^e= \sum_{j=0}^e\!\binom{e}{j}(-\lambda)^{e-j}x^j,
\]
and hence we obtain
\[
    \{(x-1)(x-\lambda)\}^{(p-1)/2} = \sum_{k=0}^{p-1}\Biggl(\,\sum_{i+j=k}\!\binom{e}{i}\!\binom{e}{j}\hspace{-0.3mm}(-1)^{e-i}(-\lambda)^{e-j}\!\Biggr)x^k.
\]
Comparing the $x^k$-coefficients of both sides, we have this lemma.
\end{proof}

{Combining the above two lemmas together}, we can write $\gamma_k$ as a truncation of Gauss' hypergeometric series:
\begin{prop}\label{gamma}
The following statements are true:
\begin{enumerate}
    \item If $0 \leq k \leq e$, then we have
    \[
        \gamma_k \equiv \lambda^{e-k} \cdot (-1)^k\binom{e}{k}G^{(k)}(1/2,-k,1/2-k\,;\lambda) \pmod{p}.
    \]
    \item If $e \leq k \leq p-1$, then we have
    \[
        \gamma_k \equiv (-1)^k\binom{e}{p-1-k}G^{(k)}(1/2,1+k,3/2+k\,;\lambda) \pmod{p}.
    \]
\end{enumerate}
\end{prop}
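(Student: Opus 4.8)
The plan is to feed the closed form for $\gamma_k$ supplied by the second lemma into the binomial--hypergeometric identity of the first lemma. Concretely, I would start from
\[
    \gamma_k = (-1)^{p-1-k}\,\lambda^{e-k}\sum_{n=0}^k\binom{e}{n}\binom{e}{k-n}\lambda^n,
\]
obtained by pulling the factor $\lambda^{e-k}$ out of the expression in the second lemma, and then recognize the remaining sum as exactly the left-hand side of the first lemma with $a=e$, $b=k$, $d=k$, and $z=\lambda$.

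The two cases of the proposition correspond precisely to the two cases of that lemma. For part (1), the hypothesis $0\le k\le e$ puts us in the range $0\le b\le a$, so the first case rewrites the sum as $\binom{e}{k}\,G^{(k)}(-e,-k,1+e-k\,;\lambda)$. For part (2), the hypothesis $e\le k\le p-1$ gives $a\le b\le 2a$, and the second case yields $\lambda^{k-e}\binom{e}{2e-k}\,G^{(k)}(-e,k-2e,1+k-e\,;\lambda)$; here the factor $\lambda^{k-e}$ absorbs the $\lambda^{e-k}$ in front, and $\binom{e}{2e-k}=\binom{e}{p-1-k}$.

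It then remains only to normalize the parameters modulo $p$. Using $e=(p-1)/2$ one checks the congruences $-e\equiv 1/2$, $1+e-k\equiv 1/2-k$, $k-2e\equiv 1+k$, and $1+k-e\equiv 3/2+k \pmod{p}$, each of which reduces to the single relation $2e\equiv -1\pmod p$. Hence the two hypergeometric truncations coincide modulo $p$ with $G^{(k)}(1/2,-k,1/2-k\,;\lambda)$ and $G^{(k)}(1/2,1+k,3/2+k\,;\lambda)$ respectively. Since $p-1$ is even, $(-1)^{p-1-k}=(-1)^k$, and assembling these pieces gives exactly the two displayed formulas.

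I do not expect any deep difficulty: the argument is a chain of substitutions and elementary congruences. The one point deserving genuine care is the meaning of the reduced hypergeometric series in characteristic $p$. In the range relevant to part (2) the naively reduced denominator Pochhammer factors $(c\,;n)$ can be divisible by $p$, so evaluating $G^{(k)}(1/2,1+k,3/2+k\,;\lambda)$ term by term in $\mathbb{F}_p$ risks $0/0$. The clean way around this is to read the identity off the characteristic-zero (equivalently $p$-integral) polynomial $G^{(k)}(-e,k-2e,1+k-e\,;z)$ produced by the first lemma, whose coefficients vanish identically once $n\ge p-k$ because the numerator Pochhammer $(k-2e\,;n)$ then contains the integer factor $0$, and to reduce modulo $p$ only afterwards. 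Verifying that this lift is $p$-integral --- the surviving denominators for $n<p-k$ involve only factors lying in $[1,e]$, hence units modulo $p$ --- is the technical heart of the argument, and I expect it to be the main obstacle to making the phrase ``combining the two lemmas'' fully rigorous.
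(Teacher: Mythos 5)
Your proposal is correct and follows exactly the paper's route: the paper proves Proposition \ref{gamma} simply by combining the two preceding lemmas (the closed form for $\gamma_k$ and the binomial--hypergeometric identity with $a=e$, $b=d=k$, $z=\lambda$), then reducing the parameters via $2e\equiv -1\pmod p$, which is precisely your chain of substitutions. Your closing discussion of $p$-integrality --- reading the identity off the integral polynomial $G^{(k)}(-e,k-2e,1+k-e\,;z)$, whose coefficients vanish for $n\ge p-k$ and whose surviving denominators lie in $[1,e]$, before reducing mod $p$ --- is a point the paper leaves implicit, and it correctly resolves the only genuine subtlety in the statement.
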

\begin{proof}
It follows from the previous lemmas.
\end{proof}
\begin{cor}\label{div}
For all $0 \leq k \leq e$, we have that $\gamma_{e-k} \equiv \gamma_{e+k}\lambda^k \pmod{p}$.
\end{cor}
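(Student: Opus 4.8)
The plan is to exploit a symmetry of the polynomial $P(x) := \{(x-1)(x-\lambda)\}^e = \sum_{k=0}^{p-1}\gamma_k x^k$, rather than to manipulate the hypergeometric expressions of Proposition \ref{gamma}. Note that $P$ has degree $p-1 = 2e$, and that its two roots $x=1$ and $x=\lambda$ are interchanged by the involution $x \mapsto \lambda/x$. This suggests that the substitution $x \mapsto \lambda/x$ should relate $\gamma_{e-k}$ to $\gamma_{e+k}$ directly, and indeed I expect it to produce exactly the claimed identity.

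First I would compute $P(\lambda/x)$. Since
\[
    P(\lambda/x) = \left(\frac{\lambda}{x}-1\right)^{\!e}\left(\frac{\lambda}{x}-\lambda\right)^{\!e} = \frac{\lambda^e\,(\lambda-x)^e(1-x)^e}{x^{2e}},
\]
and the two sign factors cancel, $(\lambda-x)^e(1-x)^e = (-1)^{2e}(x-\lambda)^e(x-1)^e = P(x)$, this gives the functional equation $x^{p-1}P(\lambda/x) = \lambda^e P(x)$. Writing out $x^{p-1}P(\lambda/x) = \sum_j \gamma_j \lambda^j x^{p-1-j}$ and comparing the coefficient of $x^m$ on both sides yields $\gamma_{p-1-m}\,\lambda^{p-1-m} = \lambda^e \gamma_m$ for every $m$. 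Using $p-1 = 2e$ and dividing by $\lambda^{p-1-m}$ (legitimate since $\lambda \neq 0$), this simplifies to $\gamma_{p-1-m} = \lambda^{m-e}\gamma_m$. Finally I would set $m = e+k$, so that $p-1-m = e-k$ and $\lambda^{m-e} = \lambda^k$, obtaining $\gamma_{e-k} = \lambda^k\gamma_{e+k}$, which is the assertion; the ranges $0 \le e-k$ and $e+k \le p-1$ are both guaranteed by $0 \le k \le e$.

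The argument is essentially routine, so there is no serious obstacle; the only points demanding a little care are the sign bookkeeping in the functional equation (verifying that the two factors of $(-1)^e$ cancel) and the observation that $\gamma_{p-1-m} = \lambda^{m-e}\gamma_m$ is a genuine identity in $K$, so that passing to the stated congruence modulo $p$ is immediate in characteristic $p$. I would also note that the same relation could instead be extracted from Proposition \ref{gamma} by matching the hypergeometric formulas for $\gamma_{e-k}$ (case (1)) and $\gamma_{e+k}$ (case (2)), but the functional-equation route is cleaner and avoids any manipulation of the truncated series $G^{(k)}$.
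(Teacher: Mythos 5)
Your proof is correct, and it takes a genuinely different route from the paper. The paper deduces the corollary from the hypergeometric expressions of Proposition \ref{gamma}: it matches the formula for $\gamma_{e-k}$ (case (1)) with that for $\gamma_{e+k}$ (case (2)), using that the parameter pairs $-e+k,\ 1+e+k$ and $1/2-e+k,\ 3/2+e+k$ differ by $p$ and hence agree mod $p$, together with the observation that the terms of the truncated series with $e-k < n \leq e+k$ vanish mod $p$, so the two truncations coincide. You instead observe that the involution $x \mapsto \lambda/x$ swaps the roots $1$ and $\lambda$, which yields the exact functional equation $x^{p-1}P(\lambda/x) = \lambda^e P(x)$ for $P(x) = \{(x-1)(x-\lambda)\}^e$ (your sign bookkeeping is right: the two factors of $(-1)^e$ cancel), and comparing coefficients gives $\gamma_{e-k} = \lambda^k \gamma_{e+k}$ as an identity in $K$, of which the stated congruence is a trivial consequence. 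Your argument is more elementary and strictly more general: it uses neither $e = (p-1)/2$ nor characteristic $p$ (the coefficient symmetry of $\{(x-1)(x-\lambda)\}^e$ holds over any commutative ring with $\lambda$ invertible), whereas the paper's route is intrinsically a mod-$p$ statement. What the paper's approach buys is consistency with its framework — Proposition \ref{gamma} is needed anyway for Proposition \ref{alpha} and the algorithms, so the corollary comes almost for free once those formulas are in place — while yours isolates the real reason the symmetry holds and avoids any manipulation of the truncations $G^{(d)}$.
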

\begin{proof}
It follows from Corollary \ref{gamma} that
\begin{align*}
    \gamma_{e-k} &\equiv \lambda^k \cdot (-1)^{e-k}\binom{e}{e-k}G^{(e-k)}(1/2,-e+k,1/2-e+k\,;\lambda)\\
    & \equiv \lambda^k \cdot (-1)^{e+k}\binom{e}{e-k}G^{(e+k)}(1/2,1+e+k,3/2+e+k\,;\lambda)\\
    & \equiv \gamma_{e+k}\lambda^k \pmod{p}.
\end{align*}
Here, we use the fact that
\[
    G^{(e+k)}(1/2,-e+k,1/2-e+k\,;\lambda) = G^{(e-k)}(1/2,-e+k,1/2-e+k\,;\lambda)
\]
since $\lambda^n$-terms are vanished $\hspace{-1.7mm}\pmod{p}$ for all $e-k < n \leq e+k$.
\end{proof}

\end{document}